\newcommand{\Z}{{\mathbb{Z}}}
\newcommand{\C}{{\mathbb{C}}}
\newcommand{\R}{{\mathbb{R}}}
\newcommand{\N}{{\mathbb{N}}}
\theoremstyle{plain}
\newtheorem{theorem}{Theorem}
\newtheorem{proposition}[theorem]{Proposition}
\newtheorem{lemma}[theorem]{Lemma}
\newtheorem{corollary}[theorem]{Corollary}
\theoremstyle{definition}
\numberwithin{equation}{section}
\numberwithin{theorem}{section}
\numberwithin{equation}{section}
\newcommand{\KKK}{\mathcal{K}}
\newcommand{\LLL}{\mathcal{L}}
\begin{document}


\title[NLS with the combined terms]
{The dynamics of the NLS with the combined terms \\
in five and higher dimensions}

\author[Miao]{Changxing Miao}
\address{\hskip-1.15em Changxing Miao:
\hfill\newline Institute of Applied Physics and Computational
Mathematics, \hfill\newline P. O. Box 8009,\ Beijing,\ China,\
100088,
\newline
Beijing Center of Mathematics and Information Sciences,
Beijing, 100048, P.R.China.
}
\email{miao\_changxing@iapcm.ac.cn}

\author[Xu]{Guixiang Xu}
\address{\hskip-1.15em Guixiang Xu \hfill\newline Institute of
Applied Physics and Computational Mathematics, \hfill\newline P. O.
Box 8009,\ Beijing,\ China,\ 100088, }
\email{xu\_guixiang@iapcm.ac.cn}

\author[Zhao]{Lifeng Zhao}
\address{\hskip-1.15em Lifeng Zhao \hfill\newline
University of Science and Technology of China, \hfill\newline
Hefei,\ China, } \email{zhaolifengustc@yahoo.cn}

\subjclass[2000]{Primary: 35L70, Secondary: 35Q55}

\keywords{Blow up; Dynamics; Nonlinear Schr\"{o}dinger Equation;
Scattering; Threshold Energy.}

\maketitle

{\em Dedicated to Professor Shanzhen Lu on the occasion of his 75 birthday}

\begin{abstract}In this paper, we continue the study in \cite{MiaoWZ:NLS:3d Combined} to show the scattering and blow-up result of the solution
 for the nonlinear Schr\"{o}dinger equation with the energy
below the threshold $m$ in the energy space $H^1(\R^d)$,
\begin{align} iu_t +  \Delta u  =  -|u|^{4/(d-2)}u + |u|^{4/(d-1)}u, \; d\geq 5. \tag{CNLS}
\end{align}  The threshold is given by the
ground state $W$ for the energy-critical NLS: $iu_t +  \Delta u  =
-|u|^{4/(d-2)}u$. Compared with the argument in \cite{MiaoWZ:NLS:3d Combined}, the new ingredient is that we use
the double duhamel formula in \cite{ Kiv:Clay Lecture, TaoVZ:NLS:mass compact} to
lower the regularity of the critical element in $L^{\infty}_tH^1_x$ to $L^{\infty}\dot H^{-\epsilon}_x$ for some $\epsilon>0$ in five and higher dimensions
and obtain the compactness of the critical element in $L^2_x$, which is used to control the spatial center function $x(t)$ of the critical element and furthermore
used to defeat the critical element in the reductive argument.
\end{abstract}


%
%
%
%

\section{Introduction}

We consider the dynamics of the energy solutions for the
nonlinear Schr\"{o}dinger equation (NLS) with the combined
nonlinearities in five and higher dimensions
\begin{equation} \label{NLS}
\left\{ \aligned
    iu_t +  \Delta u  = &\; f_1(u) + f_2(u),   \quad (t,x)\in \R \times \R^d , \\
     u(0)= & \; u_0(x)\in H^1(\R^d).
\endaligned
\right.
\end{equation}
where $u:\R \times \R^d \mapsto \C$, $d\geq 5$ and $f_1(u)=-|u|^{4/(d-2)}u$, $f_2(u)=
|u|^{4/(d-1)}u$.

The equation has the following mass and Hamiltonian
\begin{align*}
M(u)(t)= & \frac12 \int_{\R^d} |u(t,x)|^2 \; dx; \quad  E(u)(t)=
\int_{\R^d} \frac12 |\nabla u(t,x)|^2 \; dx  + F_1(u(t)) + F_2(u(t))
\end{align*}
where
\begin{align*}  F_1(u(t)) = \displaystyle  -\frac{d-2}{2d} \int_{\R^d}
|u(t,x)|^{\frac{2d}{d-2}}\; dx,\;\;  F_2(u(t))= \frac{d-1}{2d+2}  \int_{\R^d} |u(t,x)|^{\frac{2d+2}{d-1}} \;
dx.
\end{align*}
They are conserved for the sufficient smooth solutions of
\eqref{NLS}.

In \cite{TaoVZ:NLS:combined}, Tao, Visan and Zhang made the
comprehensive study of
\begin{align*}
iu_t +  \Delta u  =  |u|^{\frac{4}{d-2}}u + |u|^{\frac{4}{d-1}}u
\end{align*}
in the energy space. They made use of the interaction Morawetz
estimate established in \cite{CKSTT04} and the stability theory for
the scattering solution. Their result is based on the scattering
result of the defocusing, energy-critical NLS in the energy space,
which is established by Bourgain \cite{Bou:NLS:99, Bou:NLS:book} for
the radial case, I-team \cite{CKSTT07}, Ryckman-Visan \cite{RyV05}
and Visan \cite{Vi05} for the nonradial case. Since the classical
interaction Morawetz estimate fails for \eqref{NLS}, Tao, et al.,
leave the scattering and blow-up dichotomy of \eqref{NLS} below the
threshold as an open problem in \cite{TaoVZ:NLS:combined}. For other
results, please refer to \cite{AkaIKN:NLS:combined:blowup,
AkaIKN:NLS:combined:scattering, DuyHR:NLS:GWPS, GinV:85:NLS,
GinV:85:NLKG subcritical, HolRou:NLS:GWPS, HolRou:NLS:BU, Nak:NLKG
low dim:subcrit, Nak:01:NLKG subcritical, NakSch:cubic NLS:Rigidity,
TaoVZ:NLS:combined, Zha:global:NLKG, Zhang:NLS:06}.

For the focusing, energy-critical NLS
\begin{align}\label{NLS:focusing critical}
iu_t +  \Delta u = -|u|^{\frac{4}{d-2}}u.
\end{align}
Kenig and Merle first applied the concentration
compactness in \cite{BahG:NLW:proffile decomp, Ker:NLS:profile decomp, Ker:NLS:compactness}
into the scattering theory of the radial solution of
\eqref{NLS:focusing critical} in \cite{KenM:NLS:GWP} with the energy below that of the
ground state of
\begin{align*}
-\Delta W = |W|^{\frac{4}{d-2}}W.
\end{align*}
Subsequently, Killip and Visan made use of the double Duhamel
argument in \cite{KiTV:NLS:2d, TaoVZ:NLS:mass compact} to removed
the radial assumption in \cite{KiV:en-NLS:high dim}. For the
applications of the concentration compactness in the scattering and
the blow up theory of the NLS, NLW, NLKG and Hartree equations,
please see \cite{Dod:NLS:higher dim, Dod:NLS:two dim, Dod:NLS:one
dim, Dod:NLS:foc, DuyHR:NLS:GWPS, DuyM:NLS:ThresholdSolution,
DuyR:NLS:ThresholdSolution, HolRou:NLS:GWPS, HolRou:NLS:BU,
IbrMN:f:NLKG, KenM:NLW:GWP, KiTV:NLS:2d, KiV:en-NLS:high dim,
KiVZ:NLS:high dim, KriNS:e-critical NLW, KriNakSch:1D KG, LiZh:NLS,
MiaoWX:Har:dynamic, MiXZ:Hartree:De-rad, MiXZ07b, MiXZ:Hartree:low
regul, MiXZ:Hartree:mass critical, MiXZ:NLS:e-crit
Hartree:nonradial}.

In \cite{MiaoWZ:NLS:3d Combined}, we made use of the concentration compactness argument and rigidity argument to
show the dichotomy of the radial solution of \eqref{NLS} in $H^1(\R^3)$ with energy
less than the threshold.
In this paper, we continue this study in five and higher dimensions.

Now for $\varphi\in H^1$, we denote the scaling quantity
$\varphi^{\lambda}_{d, -2}$ by
\begin{align*}
\varphi^{\lambda}_{d, -2} (x)= e^{d\lambda}\varphi(e^{2\lambda}x).
\end{align*}
We denote the scaling derivative of $E$ by $K(\varphi)$
\begin{align}\label{scaling deriv:special}
K(\varphi)=  \LLL E(\varphi)
:=  \dfrac{d}{d \lambda } \Big|_{\lambda =0 } E(
\varphi^{\lambda}_{d, -2})  =  \int_{\R^d} 2 |\nabla
\varphi|^2 - 2 |\varphi|^{\frac{2d}{d-2}} + \frac{2d}{d+1} |\varphi|^{\frac{2d+2}{d-1}} \; dx,
\end{align}
which is connected with the Virial identity, and then plays the
important role in the blow-up and scattering of the solution of
\eqref{NLS}.

The threshold is determined by the following constrained
minimization of the energy $E(\varphi)$
\begin{align}\label{minimization}
m = \inf \{ E(\varphi)\; |\; 0\not=\varphi \in H^1(\R^d), \;
K(\varphi)=0 \}.
\end{align}
Since we consider the $\dot H^1$-critical growth with the
subcritical perturbation, we need the following modified energy
\begin{align*}
E^c(u)= & \int_{\R^d} \left(\frac12 |\nabla u(t,x)|^2 -\frac{d-2}{2d}
|u(t,x)|^{\frac{2d}{d-2}} \right) \; dx.
\end{align*}

As the nonlinearity $|u|^{\frac{4}{d-1}}u$ is the defocusing, $\dot
H^1$-subcritical perturbation, one think that the focusing, $\dot
H^1$-critical term plays the decisive role of the threshold of the
scattering solution of \eqref{NLS} in the energy space. Just as the
3d case in \cite{MiaoWZ:NLS:3d Combined}, the first result is to
characterize the threshold energy $m$ as the following

\begin{proposition}[\cite{MiaoWZ:NLS:3d Combined}, Proposition 1.1]\label{threshold-energy} For $d\geq 5$, there is no minimizer for \eqref{minimization}. But for the threshold energy
$m$, we have
\begin{align*}
m = E^c(W),
\end{align*}
where $W\in   H^1(\R^d)$ is the ground state of the massless equation
\begin{align*}
-\Delta W = |W|^{\frac{4}{d-2}}W.
\end{align*}
\end{proposition}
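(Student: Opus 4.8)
The plan is to prove the two assertions separately: first that the infimum $m$ equals the critical energy $E^c(W)$ of the Aubin--Talenti function, and then that this infimum is not attained. For the lower bound $m \geq E^c(W)$, I would take any admissible $\varphi$, i.e. $\varphi \neq 0$ with $K(\varphi)=0$. Writing out $K(\varphi)=0$ from \eqref{scaling deriv:special} gives
\begin{align*}
2\int_{\R^d} |\nabla\varphi|^2 \,dx \;=\; 2\int_{\R^d} |\varphi|^{\frac{2d}{d-2}}\,dx \;-\; \frac{2d}{d+1}\int_{\R^d}|\varphi|^{\frac{2d+2}{d-1}}\,dx \;\leq\; 2\int_{\R^d}|\varphi|^{\frac{2d}{d-2}}\,dx,
\end{align*}
since the subcritical term enters with a favorable (defocusing) sign. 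Combining this with the sharp Sobolev inequality $\|\varphi\|_{L^{2d/(d-2)}}^2 \leq C_d \|\nabla\varphi\|_{L^2}^2$, where $C_d$ is the optimal constant realized by $W$, one forces a lower bound on $\|\nabla\varphi\|_{L^2}$, namely $\|\nabla\varphi\|_{L^2}^2 \geq \|\nabla W\|_{L^2}^2$ (using $-\Delta W = |W|^{4/(d-2)}W$ and the Pohozaev identities satisfied by $W$). Then one estimates
\begin{align*}
E^c(\varphi) \;=\; \int_{\R^d}\Big(\tfrac12|\nabla\varphi|^2 - \tfrac{d-2}{2d}|\varphi|^{\frac{2d}{d-2}}\Big)\,dx
\end{align*}
from below by a quantity depending monotonically on $\|\nabla\varphi\|_{L^2}$, and since $E(\varphi) \geq E^c(\varphi)$ (the perturbation term $F_2 \geq 0$), conclude $E(\varphi) \geq E^c(W) = m_{\mathrm{crit}}$, giving $m \geq E^c(W)$.

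For the upper bound $m \leq E^c(W)$, the idea is that although $W \notin H^1(\R^d)$ for $d=5,6$ (it decays like $|x|^{-(d-2)}$, which is not $L^2$ in low dimensions — this is exactly why the problem is delicate) — wait, for $d\geq 5$ the decay $|x|^{-(d-2)}$ gives $|W(x)|^2 \sim |x|^{-2(d-2)}$, integrable at infinity precisely when $2(d-2) > d$, i.e. $d > 4$, so in fact $W \in L^2$ here and $W \in H^1(\R^d)$. So I would instead directly use scaled truncations or the scalings $W^\lambda_{d,-2}$ of $W$ itself. The point is that along a suitable family (either scalings $W_\lambda(x) = \mu^{(d-2)/2} W(\mu x)$, which leave $E^c$ invariant but send the subcritical term $F_2 \to 0$ as $\mu \to \infty$, or a one-parameter curve $W^\theta$ deforming $W$), the constraint $K(\cdot)=0$ can be restored by an additional scaling correction while the energy converges to $E^c(W)$. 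Concretely: for each such near-critical profile, multiply by the $\varphi \mapsto \varphi^\lambda_{d,-2}$ scaling and choose $\lambda = \lambda(\mu)$ so that $K(\varphi^{\lambda}_{d,-2})=0$; one checks this $\lambda(\mu) \to 0$ and $E(\varphi^{\lambda(\mu)}_{d,-2}) \to E^c(W)$, yielding $m \leq E^c(W)$.

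Finally, non-attainment: suppose $\varphi$ were a minimizer. From the lower-bound analysis, equality throughout forces equality in the sharp Sobolev inequality, so $\varphi$ must be (up to symmetries) a rescaled Aubin--Talenti function $W$; but then $\int|\varphi|^{2d/(d-1)} > 0$ strictly, so plugging $\varphi = W$ into $K$ gives $K(W) = \tfrac{2d}{d+1}\int|W|^{2d/(d-1)} > 0 \neq 0$ (since $2\int|\nabla W|^2 = 2\int|W|^{2d/(d-2)}$ by the equation), contradicting admissibility — and more carefully, any exact minimizer would have to satisfy an Euler--Lagrange equation that, combined with the two Pohozaev identities in $H^1$, is incompatible with the simultaneous presence of both nonlinear terms. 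So no minimizer exists.

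I expect the main obstacle to be the upper bound $m \leq E^c(W)$: one must exhibit admissible test functions (satisfying $K=0$ exactly) whose energy approaches $E^c(W)$, and this requires a careful two-parameter scaling argument — one scaling to push the subcritical contribution to zero, a second (asymptotically trivial) scaling to re-impose the constraint $K=0$ — together with the implicit function / continuity argument that the corrective parameter exists and tends to zero. The limiting behavior of the subcritical term under $W^\lambda_{d,-2}$ and the verification that the constraint can be solved are the technical heart; the lower bound and non-attainment then follow from sharp-Sobolev rigidity.
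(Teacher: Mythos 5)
Your lower bound contains a genuine error: after using $K(\varphi)=0$ you discard the defocusing term via $E\geq E^c$ and then try to bound $E^c(\varphi)$ from below by a monotone function of $\|\nabla \varphi\|_{L^2}$. No such bound exists on the constraint set. Indeed, $K(\varphi)=0$ gives $\|\varphi\|_{2^*}^{2^*}=\|\nabla\varphi\|_{L^2}^2+\tfrac{d}{d+1}\|\varphi\|_{\frac{2d+2}{d-1}}^{\frac{2d+2}{d-1}}$, hence $E^c(\varphi)=\tfrac1d\|\nabla\varphi\|_{L^2}^2-\tfrac{d-2}{2(d+1)}\|\varphi\|_{\frac{2d+2}{d-1}}^{\frac{2d+2}{d-1}}$, and this is unbounded below on $\{K=0\}$: taking $\varphi=a\,\psi(x/R)$ with $a$ tuned near the value balancing the two potential terms and $R\to\infty$, one keeps $K=0$ while $E^c\to-\infty$ (the price is that $F_2\to+\infty$, which is exactly the term you threw away). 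The repair is to keep $F_2$, i.e.\ use the identity $E(\varphi)=\int\big(\tfrac1{2d}|\nabla\varphi|^2+\tfrac1{2d}|\varphi|^{2^*}\big)dx+\tfrac{d-1}{4d}K(\varphi)$ (Lemma \ref{free-energ-equiva}): on $\{K=0\}$ this gives $E(\varphi)\geq\tfrac1d\|\nabla\varphi\|_{L^2}^2\geq\tfrac1d\,(C^*_d)^{-d}=E^c(W)$, using $\|\varphi\|_{2^*}^{2^*}\geq\|\nabla\varphi\|_{L^2}^2$ and the sharp Sobolev inequality exactly as you intended. This also streamlines non-attainment: equality forces $\|\varphi\|_{\frac{2d+2}{d-1}}=0$, hence $\varphi\equiv0$, a contradiction (your alternative, Sobolev rigidity plus $K(W)=\tfrac{2d}{d+1}\|W\|_{\frac{2d+2}{d-1}}^{\frac{2d+2}{d-1}}>0$, also works; note the exponent there should be $\tfrac{2d+2}{d-1}$, not $\tfrac{2d}{d-1}$).

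The rest of your plan is sound and close in spirit to the paper's (which quotes the proof from the $3$d companion paper): your upper bound via the $\dot H^1$-invariant rescaling of $W$, which kills $F_2$ while fixing $E^c$ and $K^c$, followed by a small correction $\varphi\mapsto\varphi^{\lambda}_{d,-2}$ with $\lambda(\mu)\to0$ restoring $K=0$, is precisely the mechanism behind the paper's constraint relaxations. The paper packages this differently: it replaces $E$ by the positive functional $H=(1-\LLL/\bar\mu)E$, relaxes the constraint in \eqref{minimization} from $K=0$ to $K\leq0$ and then to $K^c\leq0$ (Lemmas \ref{minimization:H} and \ref{minimization:Hc}), after which the upper bound is immediate since $K^c(W)=0$ and $H(W)=E^c(W)$, and the lower bound is the same sharp-Sobolev computation (Lemma \ref{threshold}). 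Your direct route is fine once the lower-bound step is repaired as above.
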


Main result in this paper is
\begin{theorem}\label{theorem} For $d\geq 5$. Let $u_0\in H^1(\R^d) $ with
\begin{equation}
E(u_0) <  m,
\end{equation}
and $u$ be the solution of \eqref{NLS} and $I$ be its maximal
interval of existence. Then
\begin{enumerate}
\item[\rm (a)] If $K(u_0)\geq 0$, then $I=\R$, and $u$ scatters in both time directions as $t\rightarrow \pm \infty$ in
$H^1$;

\item[\rm (b)] If $K(u_0)<0$ and $xu_0\in L^2$ or $u_0$ is radial , then $u$ blows up both
forward and backward at finite time in $H^1$.
\end{enumerate}
\end{theorem}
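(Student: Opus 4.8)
\emph{Overall strategy.} The plan is the Kenig--Merle concentration--compactness/rigidity road map, treating the defocusing $\dot H^1$-subcritical term $|u|^{4/(d-1)}u$ as a (scale-breaking) perturbation of the focusing energy-critical equation \eqref{NLS:focusing critical}, whose scattering below $E^c(W)$ in $\dot H^1$ for $d\ge 5$ is known from \cite{KiV:en-NLS:high dim} and will serve as a black box; the mass and energy of \eqref{NLS} are conserved and will be exploited throughout.

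\emph{Part (b).} Since $K$ is continuous on $H^1$ and, by Proposition \ref{threshold-energy}, $K(\varphi)=0$ with $\varphi\neq 0$ forces $E(\varphi)\ge m$, the set $\{v\in H^1:E(v)<m,\ K(v)<0\}$ is invariant under the flow. If $K(u(t))<0$ then $2\|\nabla u(t)\|_{L^2}^2<2\|u(t)\|_{L^{2d/(d-2)}}^{2d/(d-2)}$, so the sharp Sobolev inequality (together with $m=E^c(W)$ and the Pohozaev identities $\|\nabla W\|_{L^2}^2=\|W\|_{L^{2d/(d-2)}}^{2d/(d-2)}=d\,E^c(W)$) gives $\|\nabla u(t)\|_{L^2}^2>\|\nabla W\|_{L^2}^2=dm$. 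Using conservation of energy to eliminate the energy-critical potential term, the virial identity $\tfrac{d^2}{dt^2}\int|x|^2|u|^2\,dx=4K(u(t))$ becomes
\begin{align*}
\frac{d^2}{dt^2}\int_{\R^d}|x|^2|u(t,x)|^2\,dx
&= \frac{16d}{d-2}\,E(u_0) - \frac{16}{d-2}\,\|\nabla u(t)\|_{L^2}^2 - \frac{8d}{(d+1)(d-2)}\,\|u(t)\|_{L^{(2d+2)/(d-1)}}^{(2d+2)/(d-1)} \\
&\le \frac{16d}{d-2}\bigl(E(u_0)-m\bigr) < 0 .
\end{align*}
When $xu_0\in L^2$ the nonnegative variance would thus become negative in finite time, which is impossible, so $I$ is bounded. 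For radial $u_0$ one replaces $|x|^2$ by a smooth truncation $\phi_R\sim|x|^2$ on $|x|\le R$ and absorbs the tail errors using the radial Sobolev embedding and the lower bound $\|\nabla u(t)\|_{L^2}^2>dm$, exactly as in \cite{MiaoWZ:NLS:3d Combined}.

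\emph{Part (a), setup.} First come the variational ``energy-trapping'' estimates on $\KKK^+=\{E<m,\ K\ge 0\}$: there $E(u)$ is comparable to $\|\nabla u\|_{\dot H^1}^2$ up to harmless lower-order corrections, $K(u)\ge 0$ with quantitative control, and hence every solution starting in $\KKK^+$ is global with $\sup_t\|u(t)\|_{H^1}<\infty$. Combining this with the $H^1$ local theory and the stability theory in the energy-critical Strichartz spaces reduces scattering to the finiteness of a suitable $\dot H^1$-critical scattering norm of type $L^{2(d+2)/(d-2)}_{t,x}$. Let $E_c$ be the supremum of energies below which this holds for all data in $\KKK^+$ and assume, for contradiction, $E_c<m$. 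A linear profile decomposition in $H^1$ — whose profiles living at escaping scales are handled by the pure energy-critical scattering of \cite{KiV:en-NLS:high dim} (the variational estimates placing them on the side $\|\nabla\cdot\|_{L^2}<\|\nabla W\|_{L^2}$), while profiles at bounded scales are treated by the nonlinear-profile/stability machinery — together with a ``one profile'' reduction produces a critical solution $u_c$ with $E(u_c)=E_c$, global, non-scattering, whose orbit $\{u_c(t)\}$ is precompact in $H^1$ modulo spatial translations $x(t)$ (the subcritical term pinning the scale).

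\emph{Part (a), the obstacle and conclusion.} The heart of the matter, and the new ingredient, is to upgrade this $\dot H^1$-level compactness to compactness in $L^2$ before running the rigidity step. Following \cite{Kiv:Clay Lecture, TaoVZ:NLS:mass compact}, one uses that $u_c$ scatters in neither time direction to represent it simultaneously as a forward and a backward Duhamel integral, pairs the two representations after frequency-localization to $|\xi|\lesssim N$, and exploits the dispersive decay $\|e^{it\Delta}f\|_{L^\infty}\lesssim|t|^{-d/2}\|f\|_{L^1}$ together with the spatial concentration of $u_c$ (which forces the nonlinear term to decay off $B(x(t),R(\eta))$) to obtain $\sup_t\|u_c(t)\|_{\dot H^{-\eps}_x}<\infty$ for some $\eps=\eps(d)>0$; this is precisely where $d\ge 5$ enters, the dispersion being strong enough and the critical nonlinearity $|u|^{4/(d-2)}u$ algebraically mild enough in the low dimensions $d=5,6$. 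Interpolating $\dot H^{-\eps}$ with $\dot H^1$ and re-running the compactness argument yields precompactness of $\{u_c(t)\}$ in $L^2$ modulo $x(t)$. With finite conserved mass and momentum now available, one shows $|x(t)|=o(t)$ as $t\to\pm\infty$ by the standard argument, and finishes with a truncated virial identity $z_R(t)=\int\phi_R(x-x(t))|u_c(t,x)|^2\,dx$: by $L^2$-compactness the error terms are $o(1)$ uniformly in $t$, so $z_R''(t)\ge 2K(u_c(t))\ge 2\delta_1>0$ for all $t\in\R$ while $z_R(t)\lesssim R^2\,M(u_c)$ is bounded, a contradiction. Hence $E_c\ge m$ and (a) follows. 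I expect the delicate points to be: making the double-Duhamel pairing absolutely convergent with the correct exponent $\eps(d)$ (the fractional power in the nonlinearity for $d=5,6$ requires care), and the bookkeeping in the profile decomposition needed to exclude escaping scales via the energy-critical theory.
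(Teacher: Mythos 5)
Your overall architecture is the same as the paper's: variational trapping, reduction to a critical energy $E^*_C<m$, extraction of a critical element with translation-compactness (escaping scales $h^j_n\to 0$ killed by the Killip--Visan energy-critical theory), negative regularity $u_c\in L^\infty_t\dot H^{-\epsilon}_x$ via the double Duhamel pairing (dispersive estimate versus Bernstein, with $d\ge 5$ making the time integral converge), $L^2$-localization of mass, zero momentum, $|x(t)|=o(t)$, and a truncated virial rigidity; your part (b) is also the paper's computation (the bound $\|\nabla u(t)\|_{L^2}^2>dm$ from $K<0$ and sharp Sobolev, then the virial identity with the critical potential term eliminated by energy conservation, plus the radial truncation).

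However, your final rigidity step has a genuine flaw as written. You take $z_R(t)=\int\phi_R(x-x(t))|u_c(t,x)|^2\,dx$ with the weight centered at the compactness center $x(t)$ and differentiate twice in time; but $x(t)$ is only produced by the compactness argument, is not differentiable (not even continuous in general), and no bound on $\dot x(t)$ is available, so the inequality $z_R''(t)\ge 2K(u_c(t))-o(1)$ is unjustified. Moreover, your contradiction ``$z_R''\ge 2\delta_1>0$ while $z_R\lesssim R^2M(u_c)$'' never uses the fact $|x(t)|=o(t)$ that you went to the trouble of proving, and it cannot be run with a single fixed $R$ unless $x(t)$ is bounded, which is not known. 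The paper's version avoids both problems: the weight is centered at the origin, $R:=C(\eta)+\sup_{t\in[T_0,T_1]}|x(t)|\le C(\eta)+\eta T_1$ on a time window $[T_0,T_1]$, one uses the first-derivative bound $|\partial_t V_R(t)|\lesssim R$ (Cauchy--Schwarz plus mass/energy bounds) rather than the crude bound on $V_R$ itself, and Lemma \ref{uniform bound} together with Corollary \ref{compact:almost periodicity} gives $\partial_t^2 V_R(t)\gtrsim E(u_0)$ on $[T_0,T_1]$; integrating once in time yields $(T_1-T_0)E(u_0)\lesssim C(\eta)+\eta T_1$, which is the contradiction for $\eta$ small and $T_1$ large. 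With your step replaced by this fixed-center argument (all of whose ingredients you have already established), the proof closes.
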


By the above result, we conclude that the focusing, $\dot H^1$-critical term make the main contribution to
the determination of the threshold of the scattering solution of \eqref{NLS}. For the case $d=3$, we verify the above result for the radial case in \cite{MiaoWZ:NLS:3d Combined}.
In this paper, we show the scattering result without the radial assumption in five and higher dimensions.
Compared with the argument in \cite{MiaoWZ:NLS:3d Combined}, the new ingredient in five and higher dimensions is that we can use
the double duhamel formula in \cite{ Kiv:Clay Lecture, TaoVZ:NLS:mass compact} to
lower the regularity of the critical element in $L^{\infty}_tH^1_x$ to $L^{\infty}\dot H^{-\epsilon}_x$ for some $\epsilon > 0$
and obtain the compactness of the critical element in $L^2_x$, which is used to control the spatial center function $x(t)$ of the critical element and furthermore
used to defeat the critical element in the reductive argument.

At last, from the assumption in Theorem \ref{theorem}, we know that the solution starts from the following subsets of the energy space,
\begin{align*}
\KKK^{+}=& \Big\{ \varphi \in H^1(\R^3)\; \;\Big|\;\; \varphi\; \text{is radial},\; E(\varphi)<m,\; K(\varphi)\geq 0  \Big\},\\
\KKK^{-}=& \Big\{ \varphi \in H^1(\R^3)\; \;\Big|\;\; \varphi\; \text{is radial},\; E(\varphi)<m,\; K(\varphi)< 0  \Big\}.
\end{align*}
By the similar scaling argument to that in \cite{MiaoWZ:NLS:3d Combined}, we know that $\KKK^{\pm} \not= \emptyset$.

This paper is organized as follows. In Section \ref{S:pre}, we give the basic well-known results, including the linear and nonlinear estimates,
the local well-posedness, the perturbation theory and the monotonicity formula. In Section \ref{S:var}, we show the threshold by the variational
method, which also give the proof of Proposition \ref{threshold-energy} and various variational estimates, which will be used in the proof of Theorem
\ref{theorem}. In Section \ref{S:blow up}, we give the proof of the blow up in Theorem
\ref{theorem} in the radial case. In Section \ref{S:profiledec}, we show the linear and nonlinear profile decompositions of the $H^1$-bound
sequences of solution of \eqref{NLS}. In Section \ref{S:GWP-Scattering}, we make use of the stability theory and compactness argument to show the
global wellposedness and scattering result in Theorem \ref{theorem} in a reductive argument.

%
%
%
%

\section{Preliminaries}\label{S:pre}
In this section, we give some notation and some well-known results.
\subsection{Littlewood-Paley decomposition and Besov space}
Let $\Lambda_0(x) \in \mathcal{S}(\R^d)$ such that its Fourier
transform $\widetilde{\Lambda}_0(\xi) =1$ for $|\xi|\leq 1$ and
$\widetilde{\Lambda}_0(\xi) = 0 $ for $|\xi|\geq 2$. Then we define
$\Lambda_k(x)$ for any $k\in \Z\backslash \{0\}$ and $\Lambda_{(0)}(x)$ by the
Fourier transforms:
\begin{align*}
\widetilde{\Lambda}_k(\xi)=\widetilde{\Lambda}_0(2^{-k}\xi) -
\widetilde{\Lambda}_0(2^{-k+1}\xi), \quad
\widetilde{\Lambda}_{(0)}(\xi)=\widetilde{\Lambda}_0(\xi)-\widetilde{\Lambda}_0(2\xi).
\end{align*}
Let $s\in \R$, $1\leq p, q \leq \infty$. The inhomogeneous Besov space $B^s_{p,q}$ is defined by
\begin{align*}
B^s_{p,q}=\left\{f \; \big|\; f\in \mathcal{S}'(\R^d), \Big\|2^{ks}\big\|\Lambda_k*f\big\|_{L^p_x}\Big\|_{l^q_{k\geq 0}}<\infty\right\},
\end{align*}
where $\mathcal{S}'(\R^d)$ denotes the space of tempered distributions. The homogeneous Besov space $\dot B^s_{p,q}$ can be
defined by
\begin{align*}
\dot B^s_{p,q}=\left\{f \; \Big|\; f\in {\mathcal Z}'(\R^d), \left( \sum_{k\in\Z\backslash \{0\}}  2^{qks}\big\|\Lambda_k*f\big\|^q_{L^p_x}  + \big\|\Lambda_{(0)}*f\big\|_{L^p_x}\Big\|^q\right)^{1/q}<\infty\right\}.
\end{align*}
 ${\mathcal Z}'(\R^d)$ denotes the dual space of ${\mathcal Z}(\R^d)=\{
f\in {\mathcal S}(\R^d); \partial^\alpha\hat f(0)=0; \forall \alpha\in
\mathbb N^d \,\hbox {multi-index}\}$ and can be identified by the quotient
space of ${\mathcal S}'/{\mathcal P}$ with the polynomials space ${\mathcal P}$.

\subsection{Linear estimates and nonlinear estimates}  We say that a pair of exponents $(q,r)$ is Schr\"{o}dinger $\dot H^s$-admissible in $d\geq 5$ if
\begin{align*}
\dfrac2q+\dfrac{d}{r}=\dfrac{d}{2}-s
\end{align*}
and $2\leq q, r \leq \infty$. If $I\times \R^d$ is a space-time
slab, we define the $\dot S^0(I\times \R^d)$ Strichartz norm by
\begin{align*}
\big\| u\big\|_{\dot S^0(I\times \R^d)}:=\sup\big\|u\big\|_{L^q_t
L^r_x(I\times \R^d)}
\end{align*}
where the sup is taken over all $L^2$-admissible pairs $(q,r)$. We define
the $ \dot  S^s(I\times \R^d)$ and $ S^s(I\times \R^d)$ Strichartz norm to be
\begin{align*}
\big\| u\big\|_{\dot S^s(I\times \R^d)}:=\big\| D^s u\big\|_{\dot
S^0(I\times \R^d)}, \quad \big\| u\big\|_{  S^s(I\times \R^d)}:=\big\| \left<\nabla\right>^s u\big\|_{\dot
S^0(I\times \R^d)}..
\end{align*}
We also use $\dot N^0(I\times
\R^d)$ to denote the dual space of $\dot S^0(I\times \R^d)$ and
\begin{align*}
\dot N^k(I\times \R^d):=\{u; D^k u \in \dot N^0(I\times \R^d)\}.
\end{align*}

Before we introduce the linear estimate, we first give some exponents, which will be frequently used in the paper. For
\begin{align*}
S(I) : = & L^{\infty}\left(I; L^2\right) \cap L^2\left(I; L^{2^*}\right), \\
W_1(I) : = & L^{2\frac{d+2}{d-2}} \left(I; L^{2\frac{d+2}{d-2}} \right),  \quad
V_1(I) : =  L^{2\frac{d+2}{d-2}} \left(I; L^{2\frac{d(d+2)}{d^2+4}}\right),\\
W_2(I) : = & L^{2\frac{d+2}{d-1}} \left(I; L^{2\frac{d+2}{d-1}} \right), \quad
V_2(I) : =   L^{2\frac{d+2 }{d-1}} \left(I; L^{2\frac{ d( d+2 )}{d^2+2}} \right).
\end{align*}

By definition and Sobolev's inequality, we have
\begin{lemma} For any $ \dot  S^1$ function $u$ on $I\times \R^d$, we have
\begin{align*}
\big\|\nabla u\big\|_{S(I)}  + \big\|\nabla u \big\|_{V_1(I)}+
\big\|u\big\|_{ W_1(I) }  + \big\|\nabla u\big\|_{V_2(I)}  + \big\||\nabla|^{1/2} u\big\|_{ W_2(I) } \lesssim \big\|u \big\|_{\dot  S^1}.
\end{align*}
\end{lemma}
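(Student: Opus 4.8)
The plan is to reduce each of the five terms to the definition of the $\dot S^0$ norm --- a supremum of $L^q_tL^r_x$ norms over $L^2$-admissible pairs, i.e.\ pairs with $\frac2q+\frac dr=\frac d2$ --- together with the Sobolev inequality applied in the space variable at each fixed time. First I would dispose of $\|\nabla u\|_{S(I)}$: since $S(I)=L^\infty_tL^2_x\cap L^2_tL^{2^*}_x$ with $2^*=\frac{2d}{d-2}$, and the pairs $(\infty,2)$ and $(2,2^*)$ are both $L^2$-admissible in $d\ge 5$ --- one has $\frac{2}{\infty}+\frac d2=\frac d2$ and $\frac22+\frac{d}{2^*}=1+\frac{d-2}{2}=\frac d2$ --- the definitions $\|v\|_{\dot S^0}=\sup_{(q,r)}\|v\|_{L^q_tL^r_x}$ and $\|u\|_{\dot S^1}=\|Du\|_{\dot S^0}$ give at once $\|\nabla u\|_{S(I)}\le\|\nabla u\|_{\dot S^0}=\|u\|_{\dot S^1}$. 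In the same vein I would handle $\|\nabla u\|_{V_1(I)}$ and $\|\nabla u\|_{V_2(I)}$ after checking that the pairs $\big(\tfrac{2(d+2)}{d-2},\tfrac{2d(d+2)}{d^2+4}\big)$ and $\big(\tfrac{2(d+2)}{d-1},\tfrac{2d(d+2)}{d^2+2}\big)$ are $L^2$-admissible; for the first, $\tfrac{d-2}{d+2}+\tfrac{d^2+4}{2(d+2)}=\tfrac{2(d-2)+(d^2+4)}{2(d+2)}=\tfrac{d^2+2d}{2(d+2)}=\tfrac d2$, and the computation for the second is identical. This yields $\|\nabla u\|_{V_1(I)}+\|\nabla u\|_{V_2(I)}\le 2\,\|u\|_{\dot S^1}$.

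It then remains to pass from $V_1$ to $W_1$ and from $V_2$ to $W_2$ by Sobolev embedding, the key point being that the time exponents of $W_j$ and $V_j$ coincide (they equal $\tfrac{2(d+2)}{d-2}$ for $j=1$ and $\tfrac{2(d+2)}{d-1}$ for $j=2$), so that the spatial Sobolev inequality can be applied for each fixed $t$ without touching the $L^q_t$ integrability. For $W_1$ I would use the embedding $\dot W^{1,\,2d(d+2)/(d^2+4)}(\R^d)\hookrightarrow L^{\,2(d+2)/(d-2)}(\R^d)$, valid because $\tfrac{d^2+4}{2d(d+2)}-\tfrac1d=\tfrac{d^2-2d}{2d(d+2)}=\tfrac{d-2}{2(d+2)}$, to obtain $\|u(t)\|_{L^{2(d+2)/(d-2)}_x}\lesssim\|\nabla u(t)\|_{L^{2d(d+2)/(d^2+4)}_x}$ and hence, taking the $L^{2(d+2)/(d-2)}_t$ norm, $\|u\|_{W_1(I)}\lesssim\|\nabla u\|_{V_1(I)}$. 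For $W_2$ I would apply the embedding $\dot W^{1/2,\,2d(d+2)/(d^2+2)}(\R^d)\hookrightarrow L^{\,2(d+2)/(d-1)}(\R^d)$, valid since $\tfrac{d^2+2}{2d(d+2)}-\tfrac1{2d}=\tfrac{d^2-d}{2d(d+2)}=\tfrac{d-1}{2(d+2)}$, to the function $|\nabla|^{1/2}u$, noting that $|\nabla|^{1/2}(|\nabla|^{1/2}u)=|\nabla|u$ and using the boundedness of the Riesz transforms on $L^p$ to replace $|\nabla|u$ by $\nabla u$; this gives $\||\nabla|^{1/2}u(t)\|_{L^{2(d+2)/(d-1)}_x}\lesssim\|\nabla u(t)\|_{L^{2d(d+2)/(d^2+2)}_x}$ and hence $\||\nabla|^{1/2}u\|_{W_2(I)}\lesssim\|\nabla u\|_{V_2(I)}$. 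Adding the four estimates proves the lemma.

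I do not expect any genuine obstacle: the whole statement amounts to verifying the admissibility identity $\frac2q+\frac dr=\frac d2$ and the Sobolev scaling relation $\frac1p-\frac sd=\frac1q$ for the concrete exponents that will recur throughout the paper, and to observing that time integrability is preserved when a spatial Sobolev inequality is applied slicewise in $t$. The only mildly delicate point is the interchange of $|\nabla|$ and $\nabla$ in the $W_2$ estimate (and, tacitly, in identifying $\|\nabla u\|_{\dot S^0}$ with $\|u\|_{\dot S^1}$), which is harmless since every Lebesgue exponent in play lies strictly between $1$ and $\infty$.
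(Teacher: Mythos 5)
Your proposal is correct and follows exactly the route the paper intends: the paper states the lemma with only the remark ``by definition and Sobolev's inequality,'' and your argument supplies precisely those details — verifying that the pairs underlying $S$, $V_1$, $V_2$ are $L^2$-admissible so the $\dot S^0$ supremum dominates them, and then passing from $V_j$ to $W_j$ by a fixed-time Sobolev embedding (with Riesz-transform boundedness to trade $|\nabla|$ for $\nabla$), which is harmless since the time exponents match. Nothing to correct.
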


Now we state the standard Strichartz estimate.

\begin{lemma}[\cite{Caz:NLS:book, KeT98, tao:book}]\label{Strichartz }
Let $I$ be a compact time interval, $k\in [0,1]$, and let $u: I\times
\R^d\rightarrow \C$ be an $\dot S^k$ solution to the forced
Schr\"{o}dinger equation
\begin{align*}
iu_t + \Delta u =F
\end{align*}
for a function $F$. Then we have
\begin{align*}
\big\|u\big\|_{\dot S^k(I\times \R^d)} \lesssim
\big\|u(t_0)\big\|_{\dot H^k(\R^d)} + \big\|F\big\|_{\dot
N^k(I\times \R^d)},
\end{align*}
for any time $t_0\in I$.
\end{lemma}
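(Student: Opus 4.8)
The plan is to reduce the asserted inequality to the classical Strichartz estimates for the free Schr\"odinger propagator $e^{it\Delta}$ via Duhamel's formula, and to dispose of the fractional derivative $D^k$ by commuting it through the linear flow. Indeed, since $D^k = |\nabla|^k$ commutes with $\partial_t$ and $\Delta$, the function $v:=D^k u$ solves $iv_t+\Delta v = D^k F$; moreover by the very definitions in this section $\|u\|_{\dot S^k(I\times\R^d)}=\|D^k u\|_{\dot S^0(I\times\R^d)}$, $\|F\|_{\dot N^k(I\times\R^d)}=\|D^k F\|_{\dot N^0(I\times\R^d)}$, and $\|u(t_0)\|_{\dot H^k}=\|D^k u(t_0)\|_{L^2}$. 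Hence it suffices to prove the case $k=0$; the hypothesis $k\in[0,1]$ is used only to guarantee that $\dot H^k$ and the $\dot S^k$, $\dot N^k$ norms are meaningful. For $k=0$, Duhamel's formula gives, for $t,t_0\in I$,
\[
u(t) = e^{i(t-t_0)\Delta}u(t_0) - i\int_{t_0}^t e^{i(t-s)\Delta}F(s)\,ds,
\]
so it is enough to bound each term in $L^q_tL^r_x(I\times\R^d)$ for every $L^2$-admissible pair $(q,r)$ and take the supremum, which is exactly $\|\cdot\|_{\dot S^0(I\times\R^d)}$.

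For the homogeneous term I would invoke the standard Keel--Tao Strichartz estimate $\|e^{it\Delta}g\|_{L^q_tL^r_x(\R\times\R^d)}\lesssim\|g\|_{L^2_x}$ for all $L^2$-admissible $(q,r)$; the only excluded case $(q,r,d)=(2,\infty,2)$ is irrelevant since $d\geq5$. This follows from the dispersive bound $\|e^{it\Delta}g\|_{L^\infty_x}\lesssim |t|^{-d/2}\|g\|_{L^1_x}$ interpolated with mass conservation, followed by a $TT^*$ argument and the Hardy--Littlewood--Sobolev inequality (with the Keel--Tao bilinear interpolation at the endpoint $q=2$), all of which are available from \cite{Caz:NLS:book, KeT98, tao:book}. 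Applying this with $g=u(t_0)$ and restricting the time variable to $I$ yields $\|e^{i(\cdot-t_0)\Delta}u(t_0)\|_{L^q_tL^r_x(I\times\R^d)}\lesssim\|u(t_0)\|_{L^2_x}$ for every admissible $(q,r)$.

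For the inhomogeneous term, I would first establish the untruncated estimate $\|\int_\R e^{i(t-s)\Delta}F(s)\,ds\|_{L^q_tL^r_x}\lesssim\|F\|_{L^{\tilde q'}_t L^{\tilde r'}_x}$ for any two $L^2$-admissible pairs $(q,r)$ and $(\tilde q,\tilde r)$, by composing the dual homogeneous estimate $\|\int_\R e^{-is\Delta}F(s)\,ds\|_{L^2_x}\lesssim\|F\|_{L^{\tilde q'}_t L^{\tilde r'}_x}$ with the homogeneous estimate above. To pass from the full integral over $\R$ to the retarded integral $\int_{t_0}^t$ appearing in Duhamel's formula I would use the Christ--Kiselev lemma, which applies whenever $q>\tilde q'$; the remaining double-endpoint case $q=\tilde q'=2$ is handled directly by the Keel--Tao argument, again with no obstruction in $d\geq5$. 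Taking the infimum of the right-hand side over all admissible $(\tilde q,\tilde r)$ produces the bound by $\|F\|_{\dot N^0(I\times\R^d)}$, and the supremum over $(q,r)$ on the left produces $\|u\|_{\dot S^0(I\times\R^d)}$. Summing the homogeneous and inhomogeneous contributions completes the $k=0$ case, hence the lemma.

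The only genuinely delicate ingredient is the endpoint ($q=2$) Strichartz estimate together with its retarded version, which lies beyond the elementary $TT^*$/Hardy--Littlewood--Sobolev scheme and requires the Keel--Tao bilinear interpolation; in the present setting $d\geq5$ this endpoint is comfortably subcritical and every needed estimate is standard and recorded in \cite{Caz:NLS:book, KeT98, tao:book}. Everything else is a mechanical application of Duhamel's formula, duality, and the Christ--Kiselev truncation.
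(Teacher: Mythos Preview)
Your proposal is correct and is precisely the standard argument recorded in the cited references \cite{Caz:NLS:book, KeT98, tao:book}; the paper itself does not supply a proof of this lemma but simply quotes it with those citations. There is therefore nothing to compare---your outline (commute $D^k$ through the flow, Duhamel, homogeneous Strichartz via dispersive$+TT^*$, inhomogeneous via duality and Christ--Kiselev, Keel--Tao at the endpoint) is exactly what those sources do.
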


For $d\geq 5$, let $\displaystyle s_{\alpha}: = \frac{d}{2} - \frac{2}{\alpha -1} = 1-\frac2d$, then $\displaystyle \alpha = \frac{d^2+2d+4}{d^2-2d+4}$.
Let $(\gamma, \rho)$ be the $\dot H^{s_{\alpha}}$-admissible pair such that
\begin{align*}
\rho = \frac{\alpha + 1 + 2^*}{2},  \quad \frac2{\gamma} = d\left(\frac12 - \frac{1}{\rho}\right)-s_{\alpha}.
\end{align*}

\begin{lemma}[\cite{Fos:NLS:exotic}]\label{strichartz:exotic} For $d\geq 5,$  and any $F\in L^2_t \left(I; \dot
B^{\frac2d}_{\frac{2d^2}{d^2+4}, 2} \right) $, we have
\begin{align*}
\left\|\int^t_0e^{i(t-s)\Delta} F(s)\; ds \right\|_{L^{\gamma}\left(I; \dot B^{\frac2d}_{\rho, 2} \right)}
\lesssim  & \big\| F\big\|_{L^2_t \left(I; \dot
B^{\frac2d}_{\frac{2d^2}{d^2+4}, 2} \right)}.
\end{align*}
\end{lemma}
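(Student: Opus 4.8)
We indicate the strategy. The estimate is an inhomogeneous (``retarded'') Strichartz estimate of exotic type, and the first thing to notice is that it does \emph{not} follow by chaining two homogeneous Strichartz estimates. Writing $q_0:=\tfrac{2d^2}{d^2+4}$ and $2^*:=\tfrac{2d}{d-2}$, the source exponent $q_0$ is dual to $q_0'=\tfrac{2d^2}{d^2-4}$, and the pair $(2,q_0')$ lies \emph{strictly below} the Keel--Tao endpoint $(2,2^*)$ as soon as $d\ge3$; hence $(2,q_0')$ is not a non-forbidden Schr\"odinger-admissible pair, and the factorization $\int_0^t e^{i(t-s)\Delta}F(s)\,ds=e^{it\Delta}\int_0^t e^{-is\Delta}F(s)\,ds$ cannot be used to split the estimate into a dual Strichartz bound followed by a linear one (already the $L^2_t$-integrability of $e^{it\Delta}h$ in $L^{q_0'}_x$ fails at infinity for $d\ge4$). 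What makes the estimate true is the $\tfrac2d$ derivatives carried by the Besov indices together with the $\ell^2$-orthogonality of the Littlewood--Paley blocks. Accordingly, the plan is to deduce the lemma from the Besov-space inhomogeneous Strichartz estimates of \cite{Fos:NLS:exotic}; the only task by hand is to check that the exponents $\alpha,s_{\alpha},\gamma,\rho,q_0$ and the Besov smoothness $\tfrac2d$ produced by the definitions in the introduction satisfy the hypotheses there.

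That verification is purely arithmetic. From $\alpha=\tfrac{d^2+2d+4}{d^2-2d+4}$ one gets $\alpha+1=\tfrac{2(d^2+4)}{d^2-2d+4}$, so
\[
\rho=\frac{\alpha+1+2^*}{2}=\frac{d^2+4}{d^2-2d+4}+\frac{d}{d-2},\qquad s_{\alpha}=1-\frac2d,\qquad \frac{d}{q_0}=\frac d2+\frac2d.
\]
One checks, in order: (i) $2<\alpha+1<\rho<2^*$ (the right inequality reduces to $-8<0$), so $2<\rho<\infty$; (ii) $2<\gamma<\infty$, i.e.\ $\gamma<\infty$ (equivalently $\rho>\tfrac{2d^2}{d^2-2d+4}$, which reduces to $8(d-1)>0$) and $\gamma>2$ (equivalently $\rho<\tfrac{2d^2}{(d-2)^2}$, which after clearing denominators reduces to $2(d^3-2d^2+6d-4)>0$, valid for all $d\ge1$ and in particular for $d\ge5$); (iii) the scaling relation forced by the Duhamel operator, $\tfrac2\gamma+\tfrac d\rho=\tfrac d2-s_{\alpha}=\tfrac d{q_0}-1$, which is exactly the $\dot H^{s_{\alpha}}$-admissibility of $(\gamma,\rho)$; and (iv) the regularity-matching identity $\tfrac2d+s_{\alpha}=1$, so that the estimate is consistent with sending $\dot H^{-1}$-type data to $\dot H^{1}$-type output. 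These are precisely the conditions under which \cite{Fos:NLS:exotic} supplies the retarded estimate on $\R\times\R^d$; restricting to an arbitrary slab $I\times\R^d$ is then immediate by extending $F$ by $0$ outside $I$.

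For completeness, the analytic content can be organized as follows. Since the Fourier multipliers $\Lambda_j*$ (and $\Lambda_{(0)}*$) commute with $F\mapsto\int_0^t e^{i(t-s)\Delta}F(s)\,ds$, it suffices to establish the single-block bound
\[
\Big\|\int_0^t e^{i(t-s)\Delta}(\Lambda_j*F)(s)\,ds\Big\|_{L^{\gamma}_t L^{\rho}_x}\lesssim\big\|\Lambda_j*F\big\|_{L^2_t L^{q_0}_x}
\]
with a constant independent of $j$, and then reassemble: with $u=\int_0^t e^{i(t-s)\Delta}F(s)\,ds$, Minkowski's inequality (valid since $\gamma\ge2$) together with Tonelli gives
\[
\|u\|_{L^{\gamma}_t\dot B^{2/d}_{\rho,2}}\le\big\|\,2^{2j/d}\|\Lambda_j*u\|_{L^{\gamma}_t L^{\rho}_x}\,\big\|_{\ell^2_j}\lesssim\big\|\,2^{2j/d}\|\Lambda_j*F\|_{L^2_t L^{q_0}_x}\,\big\|_{\ell^2_j}=\|F\|_{L^2_t\dot B^{2/d}_{q_0,2}}.
\]
The single-block bound is proved by rescaling to frequency $\sim1$ -- the powers of $2^j$ cancel precisely because of the $\dot H^{s_{\alpha}}$-admissibility of $(\gamma,\rho)$ and the identity $\tfrac2d+s_{\alpha}=1$ -- and then splitting the causal region $\{s<t\}$ dyadically in $|t-s|$, estimating the contribution of each shell $\{|t-s|\sim2^{\ell}\}$ by the frequency-localized Strichartz and dispersive estimates and summing over $\ell\in\Z$. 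This last summation is the one genuinely delicate point -- the main obstacle -- and it converges only because there are $\tfrac2d$ derivatives of room, equivalently because the scaling gap lies in the window $\gamma>2$; here the Besov refinement, as opposed to the Lebesgue-space inhomogeneous Strichartz estimates for acceptable pairs, becomes indispensable, and it is the analysis of \cite{Fos:NLS:exotic} that carries it out. Granting it, the lemma follows from the elementary checks (i)--(iv).
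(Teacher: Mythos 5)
The paper records no argument for this lemma beyond the citation of Foschi, so the only thing to assess is whether your reduction to Foschi's inhomogeneous Strichartz theorem is legitimate, and it is not. Foschi's retarded estimates require, besides the scaling relation you verify, that \emph{both} exponent pairs be Schr\"odinger-acceptable, i.e. $\frac1q<d(\frac12-\frac1r)$, for the output pair $(\gamma,\rho)$ \emph{and} for the pair $(\tilde q,\tilde r)$ dual to the source norm; here the source norm $L^2_tL^{2d^2/(d^2+4)}_x$ forces $(\tilde q,\tilde r)=\bigl(2,\tfrac{2d^2}{d^2-4}\bigr)$, for which acceptability reads $\frac12<d\bigl(\frac12-\frac{d^2-4}{2d^2}\bigr)=\frac2d$, false for every $d\ge 5$. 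Your checks (i)--(iv) never touch this hypothesis, so they are not ``precisely the conditions'' under which Foschi applies. Nor does the Littlewood--Paley reduction rescue the argument: Bernstein on a single block only \emph{raises} the spatial exponent of $F$, moving the dual pair further from the acceptable region, and the best block-wise operator decay is $\bigl\|e^{i\tau\Delta}P_0\bigr\|_{L^{2d^2/(d^2+4)}\to L^{\rho}}\sim\min\bigl(1,|\tau|^{-2/d}\bigr)$, whereas mediating $L^2_t\to L^{\gamma}_t$ needs decay of order at least $\frac12+\frac1\gamma>\frac12>\frac2d$. So the dyadic-in-$|t-s|$ summation you defer to Foschi is exactly the step that does not converge, and Foschi's paper does not perform it in this exponent configuration.

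The obstruction is not a technicality. Test the single-block estimate (to which your Minkowski reduction correctly shows the lemma is equivalent) with $F(s)=\chi_{[0,T]}(s)\,e^{i(s-t_1)\Delta}\phi$, where $\phi$ is a unit bump with Fourier support in $\{|\xi|\sim1\}$ and $t_1=\tfrac32T$. Since the propagators compose, $\int_0^te^{i(t-s)\Delta}F(s)\,ds=\min(t,T)\,e^{i(t-t_1)\Delta}\phi$, so an $O(1)$ time window around the focusing time $t_1$ already gives a left-hand side $\gtrsim T$; on the other hand $\bigl\|e^{i\sigma\Delta}\phi\bigr\|_{L^{2d^2/(d^2+4)}_x}\sim\langle\sigma\rangle^{2/d}$ (amplitude $|\sigma|^{-d/2}$ spread over volume $|\sigma|^{d}$), so the right-hand side is $\sim T^{\frac12+\frac2d}$, and $\frac12+\frac2d<1$ precisely when $d\ge5$. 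Thus non-strict acceptability of the dual source pair is a genuine necessary condition, it fails here, and the inequality as stated (with a constant uniform in $I$ and $F$) cannot be a formal consequence of Foschi's theorem; any correct substitute must change the exponent configuration (e.g. place the forcing in the dual of an acceptable pair, as is done in the exotic Strichartz estimates of Tao--Visan and Killip--Visan) or exploit additional structure of $F$. As written, your proposal therefore has a genuine gap at its central step, and the confidence that only elementary arithmetic remains is misplaced.
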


Let $(q_i, r_i)$, $i=1,\ldots, 6$ be the exponentials such that
\begin{align*}
\frac{1}{2} = &\; \frac{4/(d-2)}{q_1} + \frac{1}{\gamma} =\; \frac{1}{q_2} + \frac{1}{q_3} + \frac{1}{\gamma}
= \;  \frac{4/(d-1)}{q_4} + \frac{1}{\gamma}  = \;   \frac{1}{q_5} + \frac{1}{q_6} + \frac{1}{\gamma},\\
\frac{d^2+4}{2d^2} = & \; \frac{4/(d-2)}{r_1} + \frac{1}{\rho} = \; \frac{1}{r_2} + \frac{1}{r_3} + \frac{d^2 - 2\rho }{d^2 \rho}
 = \;  \frac{4/(d-1)}{r_4} + \frac{1}{\rho} = \; \frac{1}{r_5} + \frac{1}{r_6} + \frac{d^2 - 2\rho }{d^2 \rho},
\end{align*}
where
\begin{align*}
\frac1 {q_2} = \frac1  {r_2} =   \frac{d-2}{2(d+2)} \times \left(\frac{4}{d-2} -\frac4d\right), \quad
\frac1 {q_5} = \frac1  {r_5} =   \frac{d-1}{2(d+2)} \times \left(\frac{4}{d-1} -\frac4d\right),
\end{align*}
then
\begin{enumerate}
\item $A_1=(q_1, r_1)$ is $\dot H^{1 }$-admissible pair; $W_1=\left(\Big(\frac{4}{d-2}-\frac{4}{d}\Big)q_2, \big( \frac{4}{d-2}-\frac{4}{d}\Big)r_2 \right)$ is the
diagonal $\dot H^{1 }$-admissible pair;  $B_1=(\frac4d q_3, \frac4d r_3)$ is $\dot H^{1/2}$-admissible pair.

\item $A_2=(q_4, r_4)$ is $\dot H^{1/2}$-admissible pair; $W_2=\left(\Big(\frac{4}{d-1}-\frac{4}{d}\Big)q_5, \big( \frac{4}{d-1}-\frac{4}{d}\Big)r_5 \right)$ is the
diagonal $\dot H^{1/2}$-admissible pair;  $B_2=(\frac4d q_6, \frac4d r_6)$ is $L^2$-admissible pair.

\item $ES = \left(\gamma, \rho\right)$, $ES^* = \left(2,  \frac{2d^2}{d^2+4} \right)$.
\end{enumerate}

 \begin{figure}
\centering
\includegraphics[width=0.5\textwidth]{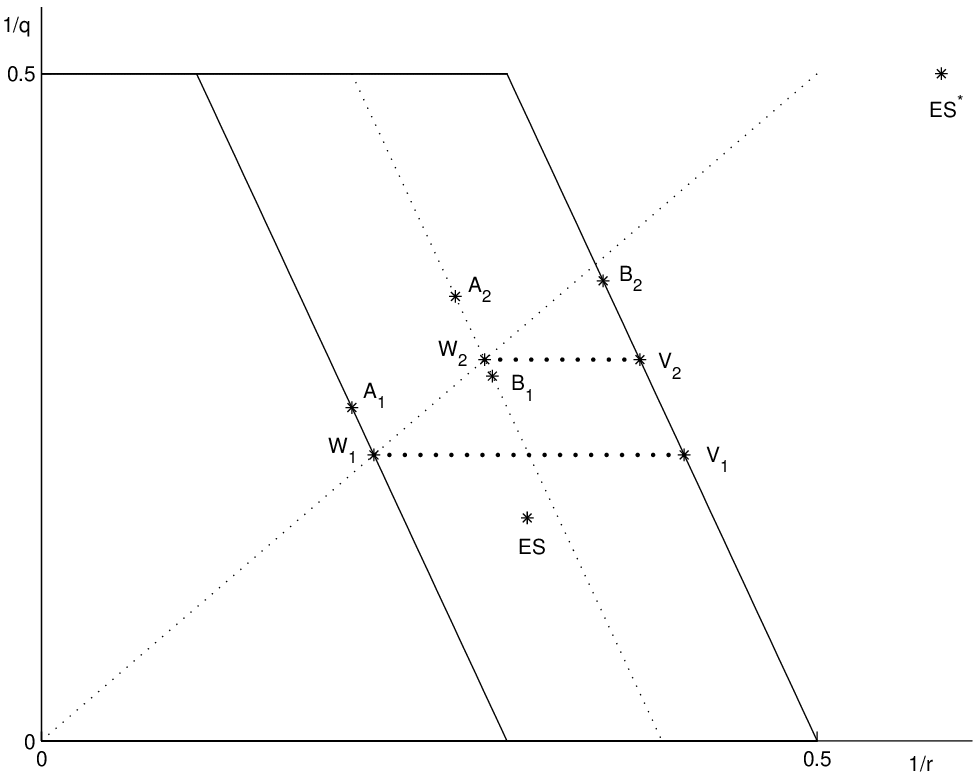}
\caption[]{ Admissible pairs: $A_i, B_i, V_i, W_i$, $i=1,2$, and $ES$, $ES^*$ }
\end{figure}

\begin{lemma}[\cite{AkaIKN:NLS:combined:scattering, Vi05}]
For $d\geq 5$, assume that $h_1$ and $h_2$ are H\"{o}lder continuous functions of order $\frac4{d-2}$ and $\frac{4}{d-1}$, respectively. Let $I$
be an interval, then we have
\begin{align*}
\big\|h_1(v)w\big\|_{ L^2_t \left(I;
B^{\frac2d}_{\frac{2d^2}{d^2+4}, 2} \right) } \lesssim & \big\|  v\big\|^{\frac4{d-2} }_{L^{q_1}_tL^{r_1}_x} \big\|w\big\| _{L^{\gamma}\left(I;   B^{\frac2d}_{\rho, 2} \right)}
  \\
  & +  \big\|  v \big\|^{\frac4{d-2}-\frac4d}_{L^{ \left(\frac{4}{d-2}-\frac{4}{d}\right)q_2}_tL^{ \left(\frac{4}{d-2}-\frac{4}{d}\right)r_2}_x}
 \big\||\nabla|^{1/2} v\big\|^{\frac{4}{d}}_{L^{\frac4d q_3}_t L^{\frac4d r_3}_x}
 \big\|w\big\| _{L^{\gamma}\left(I;  B^{\frac2d}_{\rho, 2} \right)}, \\
\big\|h_2(v)w\big\|_{ L^2_t \left(I;
B^{\frac2d}_{\frac{2d^2}{d^2+4}, 2} \right) } \lesssim & \big\|  v\big\|^{\frac4{d-1} }_{L^{q_4}_tL^{r_4}_x} \big\|w\big\| _{L^{\gamma}\left(I;  B^{\frac2d}_{\rho, 2} \right)}
  \\
  & +  \big\|  v \big\|^{\frac4{d-1}-\frac4d}_{L^{ \left(\frac{4}{d-1}-\frac{4}{d}\right)q_5}_tL^{ \left(\frac{4}{d-1}-\frac{4}{d}\right)r_5}_x}
 \big\||\nabla|^{1/2} v\big\|^{\frac{4}{d}}_{L^{\frac4d q_6}_t L^{\frac4d r_6}_x}
 \big\|w\big\| _{L^{\gamma}\left(I;  B^{\frac2d}_{\rho, 2} \right)} .\end{align*}
\end{lemma}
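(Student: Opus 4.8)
The plan is to obtain both inequalities from the same two ingredients: a fractional Leibniz (paraproduct) estimate in the Besov space $B^{2/d}_{\,\cdot\,,2}$, which distributes the $2/d$ derivative between the H\"older factor $h_j(v)$ and the function $w$, and a fractional chain rule valid for merely H\"older-continuous (not $C^1$) nonlinearities. The arithmetic to keep in mind is that $\tfrac2d<\tfrac4{d-2}$ and $\tfrac2d<\tfrac4{d-1}$ for every $d\ge5$, so the regularity we demand of $h_j(v)$ is strictly below its H\"older exponent; on the other hand $\tfrac4{d-2}\ge1$ only for $d\le6$ and $\tfrac4{d-1}\ge1$ only for $d=5$, so for large $d$ the maps $h_1,h_2$ are genuinely H\"older and one must invoke the H\"older version of the chain rule rather than Christ--Weinstein. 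All remaining bookkeeping identities are already recorded in the definitions of the pairs $A_j,W_j,B_j,ES,ES^\ast$ and in the exponent relations stated just before the lemma.

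For $h_1(v)w$ I would first apply the product estimate $\|fg\|_{B^{2/d}_{p,2}}\lesssim\|f\|_{L^{p_1}}\|g\|_{B^{2/d}_{p_2,2}}+\|f\|_{B^{2/d}_{p_3,2}}\|g\|_{L^{p_4}}$ (valid since $2/d>0$, with $\tfrac1p=\tfrac1{p_1}+\tfrac1{p_2}=\tfrac1{p_3}+\tfrac1{p_4}$), together with H\"older in time. In the first term one uses $|h_1(v)|\lesssim|v|^{4/(d-2)}$, so that $\|h_1(v)\|_{L^{p_1}_x}=\|v\|_{L^{(4/(d-2))p_1}_x}^{4/(d-2)}$; taking $(4/(d-2))p_1=r_1$ and $p_2=\rho$ and matching the time exponents through $\tfrac12=\tfrac{4/(d-2)}{q_1}+\tfrac1\gamma$ and the space exponents through $\tfrac{d^2+4}{2d^2}=\tfrac{4/(d-2)}{r_1}+\tfrac1\rho$ yields exactly the first term on the right (with $v$ in the $\dot H^1$-admissible pair $A_1$ and $w$ in $L^{\gamma}_tB^{2/d}_{\rho,2}$). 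In the second term the $2/d$ derivative lands on $h_1(v)$; here the key move is to write $|v|^{4/(d-2)}=|v|^{\,4/(d-2)-4/d}\cdot|v|^{4/d}$ and apply the H\"older chain rule to the factor $|v|^{4/d}$, whose H\"older exponent $4/d\le\tfrac45<1$ exceeds the differentiation order $2/d$: in the form $\big\||v|^{4/d}\big\|_{B^{2/d}_{\,\cdot\,,2}}\lesssim\|v\|^{4/d}_{B^{1/2}_{\,\cdot\,,2}}\lesssim\||\nabla|^{1/2}v\|_{L^{\,\cdot\,}}^{4/d}$, the resulting derivative order on $v$ being the trading ratio $(2/d)/(4/d)=\tfrac12$; the remaining factor contributes $\|v\|_{L^{\,\cdot\,}}^{\,4/(d-2)-4/d}$, and after H\"older in $t$ and a Besov--Sobolev embedding $B^{2/d}_{\rho,2}\hookrightarrow L^{\rho^\ast}_x$ used to feed $w$ into the Leibniz rule, the exponent relations defining $W_1$ (diagonal $\dot H^1$-admissible) and $B_1$ ($\dot H^{1/2}$-admissible) produce precisely the second term. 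The splitting $\tfrac4{d-2}=(\tfrac4{d-2}-\tfrac4d)+\tfrac4d$ is deliberate: it is exactly what forces the differentiated factor to produce a \emph{half}-derivative of $v$, the quantity controlled by the $\dot H^{1/2}$-scattering norms used later, and this is why the same decomposition is used uniformly even when $\tfrac4{d-2}\ge1$.

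The estimate for $h_2(v)w$ is word for word the same with $\tfrac4{d-2}$ replaced by $\tfrac4{d-1}$: one uses $|h_2(v)|\lesssim|v|^{4/(d-1)}$, writes $|v|^{4/(d-1)}=|v|^{\,4/(d-1)-4/d}\cdot|v|^{4/d}$, the trading ratio for the second factor is again $\tfrac12$, and one lands on the pairs $A_2,W_2,B_2$ (which are $\dot H^{1/2}$-, diagonal $\dot H^{1/2}$-, and $L^2$-admissible, respectively). The matching of H\"older and time exponents to the tables above is routine; the genuine technical heart, and the step I expect to be the main obstacle, is the H\"older fractional chain rule in Besov spaces for a power below $1$ --- establishing $\big\||v|^{\beta}\big\|_{B^{s}_{p,q}}\lesssim\|v\|^{\beta}_{B^{s/\beta}_{\beta p,q}}$ for $0<s<\beta\le1$ (and its inhomogeneous variant), which requires a Littlewood--Paley decomposition of $|v|^{\beta}$, a telescoping argument, and the pointwise bound $\big||a|^{\beta}-|b|^{\beta}\big|\lesssim|a-b|^{\beta}$, with care for the second Besov index and the low-frequency piece. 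This is carried out in detail in \cite{Vi05} and \cite{AkaIKN:NLS:combined:scattering}, whose arguments I would cite and adapt; the only additional input here is to run them with the specific exponents $ES$ and $ES^\ast$ dictated by Foschi's exotic Strichartz estimate, Lemma \ref{strichartz:exotic}.
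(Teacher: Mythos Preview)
The paper does not supply a proof of this lemma; it is simply quoted from \cite{AkaIKN:NLS:combined:scattering, Vi05}. Your overall strategy --- a Besov paraproduct/Leibniz rule to split the norm into a term with the $\tfrac2d$ derivative on $w$ and one with it on $h_j(v)$, followed by H\"older in time and a fractional chain rule for H\"older nonlinearities --- is exactly the route taken in those references, and the exponent bookkeeping you describe matches the tables preceding the lemma.

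There is one imprecision worth flagging. In the second term you propose to literally factor $|v|^{4/(d-2)}=|v|^{4/(d-2)-4/d}\cdot|v|^{4/d}$ and apply the chain rule to the second factor alone. But the hypothesis is that $h_1$ is an \emph{arbitrary} function H\"older of order $\tfrac4{d-2}$, not $c\,|v|^{4/(d-2)}$, so no such factorization of $h_1(v)$ is available (and even for pure powers the factoring would feed back into another product estimate). What Visan actually proves, and what you should invoke, is the chain rule applied directly to $h_1$: for $G$ H\"older-$\alpha$, $0<s<1$, $s/\alpha<\sigma\le1$,
\[
\big\||\nabla|^{s}G(v)\big\|_{L^{p}}\ \lesssim\ \|v\|_{L^{p_1}}^{\,\alpha-s/\sigma}\,\big\||\nabla|^{\sigma}v\big\|_{L^{p_2}}^{\,s/\sigma},
\qquad \tfrac1p=\tfrac{\alpha-s/\sigma}{p_1}+\tfrac{s/\sigma}{p_2},
\]
with the Besov-space analogue proved by the same telescoping argument using only $|G(a)-G(b)|\lesssim|a-b|^{\alpha}$. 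Plugging in $\alpha=\tfrac4{d-2}$, $s=\tfrac2d$, $\sigma=\tfrac12$ gives $s/\sigma=\tfrac4d$ and $\alpha-s/\sigma=\tfrac4{d-2}-\tfrac4d$, exactly your exponents; the ``factoring'' is the right mnemonic for the scaling, but the rule handles $h_1(v)$ in a single step. With this correction your sketch is the standard proof.
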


\subsection{Local wellposedness and perturbation theory}\label{S:LWP and Pertubation}

Let us denote $ST(I) $ by $ W_1(I) \cap W_2(I).$ By the analogue analysis as those in \cite{AkaIKN:NLS:combined:scattering, MiaoWZ:NLS:3d Combined}, we have
\begin{theorem}[Local wellposedness,  \cite{AkaIKN:NLS:combined:scattering, MiaoWZ:NLS:3d Combined, TaoVZ:NLS:combined}]\label{lwp}
Let $u_0\in H^1$, then for every $T>0$, there exists $\eta=\eta(T)$
such that if
\begin{align*}
\big\|\left<\nabla\right> e^{it\Delta}u_0\big\|_{V_2([-T, T])}\leq \eta,
\end{align*}
then \eqref{NLS} admits a unique strong $H^1_x$-solution $u$ defined
on $[-T, T]$. Let $(-T_{min}, T_{max})$ be the maximal time interval
on which $u$ is well-defined. Then, $u\in S^1(I\times \R^d)$ for
every compact time interval $I\subset (-T_{min}, T_{max})$ and the
following properties hold:
\begin{enumerate}
\item If $T_{max}<\infty$, then
\begin{align*}
\big\|u\big\|_{ ST ((0, T_{max})\times \R^d)}=\infty.
\end{align*}
Similarly, if $T_{min}<\infty$, then
\begin{align*}
\big\|u\big\|_{ST  ((-T_{min}, 0)\times \R^d)}=\infty.
\end{align*}
\item The solution $u$ depends continuously on the initial data
$u_0$ in the following sense: The functions $T_{min}$ and $T_{max}$
are lower semicontinuous from $  H^1_x  $ to $(0, +\infty]$. Moreover,
if $u^{(m)}_0 \rightarrow u_0$ in $  H^1_x  $ and $u^{(m)}$ is the
maximal solution to \eqref{NLS} with initial data $u^{(m)}_0$, then
$u^{(m)}\rightarrow u$ in $ST(I\times \R^d)$ and every compact subinterval $I\subset (-T_{min}, T_{max})$.
\end{enumerate}
\end{theorem}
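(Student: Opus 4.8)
The plan is to prove local existence and uniqueness by a contraction-mapping argument in a Strichartz-type space, then to pass to the maximal interval of existence by the standard gluing procedure, to obtain the blow-up criterion of item (1) by bootstrapping $S^1$-control from $ST$-control, and finally to deduce the continuous dependence in item (2) from the long-time perturbation (stability) theory for \eqref{NLS}; throughout, the argument follows the scheme of \cite{AkaIKN:NLS:combined:scattering, MiaoWZ:NLS:3d Combined, TaoVZ:NLS:combined}. The only point that is not entirely routine — and the step I expect to be the main obstacle — is that the leading nonlinearity $f_1(u)=-|u|^{4/(d-2)}u$ is $\dot H^1$-scaling-critical, so the contraction cannot be closed simply by taking $T$ small; instead I would work in the scale-invariant norms $W_1$, $W_2$, $V_2$ and the exotic Besov--Strichartz norm $L^{\gamma}_t\dot B^{\frac{2}{d}}_{\rho,2}$ of Lemma~\ref{strichartz:exotic}, extracting the needed smallness from the hypothesis $\|\langle\nabla\rangle e^{it\Delta}u_0\|_{V_2([-T,T])}\le\eta$.

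\emph{Existence and uniqueness.} First I would set up the Duhamel map
\[
\Phi(u)(t)=e^{it\Delta}u_0-i\int_0^t e^{i(t-s)\Delta}\big(f_1(u)+f_2(u)\big)(s)\,ds
\]
on the complete metric space
\[
X=\Big\{u:\ \|u\|_{S^1([-T,T]\times\R^d)}+\|\langle\nabla\rangle u\|_{L^{\gamma}_t\dot B^{\frac{2}{d}}_{\rho,2}}\le 2C\|u_0\|_{H^1},\ \ \|u\|_{ST([-T,T])}+\|\langle\nabla\rangle u\|_{V_2([-T,T])}\le 2C\eta\Big\},
\]
equipped with the weaker distance $d(u,v)=\|u-v\|_{ST([-T,T])}+\|u-v\|_{L^{\gamma}_tL^{\rho}_x}$, since the energy-critical term only obeys a chain rule and hence differences must be measured without derivatives. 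Combining the standard Strichartz estimate (Lemma~\ref{Strichartz }), the exotic Strichartz estimate (Lemma~\ref{strichartz:exotic}), the Sobolev-type embedding lemma, and the product estimates stated above — applied with the admissible triple $(A_1,W_1,B_1)$ to $f_1$, with $(A_2,W_2,B_2)$ to $f_2$, and closed through the pair $(ES,ES^*)$ — I expect a bound of the schematic form
\[
\|\Phi(u)\|_{S^1}+\|\langle\nabla\rangle\Phi(u)\|_{L^{\gamma}_t\dot B^{\frac{2}{d}}_{\rho,2}}\lesssim \|u_0\|_{H^1}+\big(\|u\|_{ST}^{4/(d-2)}+\|u\|_{ST}^{4/(d-1)}\big)\big(\|u\|_{S^1}+\|\langle\nabla\rangle u\|_{L^{\gamma}_t\dot B^{\frac{2}{d}}_{\rho,2}}\big),
\]
together with a matching estimate for $\|\Phi(u)\|_{ST}+\|\langle\nabla\rangle\Phi(u)\|_{V_2}$ whose leading term is $O(\eta)$, and a contraction estimate of the same shape for $d(\Phi(u),\Phi(v))$. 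Choosing $\eta=\eta(T)$ small then makes $\Phi$ a contraction on $X$, whose fixed point is a strong solution with $u\in C([-T,T];H^1)\cap S^1([-T,T]\times\R^d)$. For uniqueness on all of $[-T,T]$ (not merely inside the ball), I would use a subdivision argument: for two solutions with the same data both $ST$-norms are finite, so $[-T,T]$ can be cut into finitely many subintervals on which they lie below the contraction threshold, and one concludes interval by interval that the solutions agree.

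\emph{Maximal interval and blow-up criterion.} The local construction applies from any data $\phi\in H^1$ at any initial time $t_0$ and produces a solution on $[t_0,t_0+\delta]$ with $\delta>0$ fixed by the requirement $\|\langle\nabla\rangle e^{i(t-t_0)\Delta}\phi\|_{V_2([t_0,t_0+\delta])}\le\eta$; since $V_2$ is a non-endpoint Strichartz pair and $\phi\in H^1$, this quantity tends to $0$ as $\delta\to0$, so such a $\delta$ exists. Gluing these solutions yields $(-T_{min},T_{max})$ together with $u\in S^1(J\times\R^d)$ for every compact $J\subset(-T_{min},T_{max})$. To prove item (1) I would argue by contradiction: if $T_{max}<\infty$ while $\|u\|_{ST((0,T_{max})\times\R^d)}<\infty$, partition $(0,T_{max})$ into finitely many intervals on each of which $\|u\|_{ST}$ is below the contraction threshold; running the Strichartz and product estimates with $u$ itself in place of the free evolution upgrades the bound to $\|u\|_{S^1((0,T_{max})\times\R^d)}<\infty$. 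Then the Duhamel formula and Strichartz show $u(t)$ is Cauchy in $H^1$ as $t\to T_{max}^-$, so $u$ extends continuously to $t=T_{max}$ with $u(T_{max})\in H^1$; solving \eqref{NLS} forward from $T_{max}$ continues $u$ past $T_{max}$, contradicting maximality. The statement for $T_{min}$ is identical after time reversal.

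\emph{Continuous dependence, and the main difficulty.} Given $u^{(m)}_0\to u_0$ in $H^1$ and a compact $J\subset(-T_{min},T_{max})$, so that $\|u\|_{S^1(J\times\R^d)}<\infty$, I would invoke the long-time perturbation theory for the combined nonlinearity \cite{AkaIKN:NLS:combined:scattering, MiaoWZ:NLS:3d Combined, TaoVZ:NLS:combined} with $u$ as approximate solution and $u^{(m)}$ as exact solution: for $m$ large, $u^{(m)}$ is defined on all of $J$ and $\|u^{(m)}-u\|_{S^1(J\times\R^d)}\to0$, hence $u^{(m)}\to u$ in $ST(J\times\R^d)$; since this holds for every such $J$, $T_{min}$ and $T_{max}$ are lower semicontinuous from $H^1_x$ to $(0,+\infty]$. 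The part requiring the most care throughout is the functional-analytic bookkeeping dictated by the $\dot H^1$-criticality of $f_1$: one must verify that each factor produced by the multilinear splittings in the product estimates lands in a norm controlled either by $\|u_0\|_{H^1}$ (finite) or by the $O(\eta)$-quantities $\|u\|_{ST}$, $\|\langle\nabla\rangle u\|_{V_2}$, and that the loop closes through the exotic pair $(ES,ES^*)$ rather than through a critical Strichartz norm of the free evolution. Once this is in place, the subcritical term $f_2$ is a routine perturbation, and the same scale-invariant bookkeeping is exactly what makes the bootstrap in the blow-up criterion and the stability estimate go through.
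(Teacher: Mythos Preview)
The paper does not supply its own proof of this theorem; it simply records the statement and cites \cite{AkaIKN:NLS:combined:scattering, MiaoWZ:NLS:3d Combined, TaoVZ:NLS:combined} for the argument. Your proposal is a correct and fairly detailed reconstruction of exactly the approach those references take: a contraction mapping in the Strichartz/exotic-Besov spaces built from $V_2$, $W_1$, $W_2$ and the pair $(ES,ES^*)$ of Lemma~\ref{strichartz:exotic}, with differences measured in a weaker metric to cope with the merely H\"older (not $C^1$) critical nonlinearity when $d\ge6$, followed by the standard gluing, bootstrap blow-up criterion, and stability-based continuous dependence. There is nothing to correct; your write-up is more explicit than what the paper itself provides.
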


\begin{proposition}[\cite{AkaIKN:NLS:combined:scattering, MiaoWZ:NLS:3d Combined}]\label{stability}
Let $I$ be a compact time interval and let $w$ be an approximate
solution to \eqref{NLS} on $I\times \R^d$ in the sense that
\begin{align*}
i\partial_t w + \Delta w= - |w|^{\frac{4}{d-2}} w+|w|^{\frac4{d-1}} w +e
\end{align*}
for some suitable small function $e$. Assume that for some constants $L, E_0>0$, we
have
\begin{align*}
\big\|w\big\|_{ST(I)} \leq  L, \quad  \big\|w(t_0)\big\|_{
H^1_x(  \R^d)}  \leq  E_0.
\end{align*}
for some $t_0 \in I$. Let $u(t_0)$ close
to $w(t_0)$ in the sense that for some $E'>0$, we have
\begin{align*}
\big\|u(t_0)-w(t_0)\big\|_{ H^1_x}\leq E'.
\end{align*}
 Assume also that for some $\varepsilon$, we have
\begin{align} \label{small:gamma0}
  \left\| \left<\nabla\right> e^{i(t-t_0)\Delta}\big(u(t_0)-w(t_0)\big) \right\|_{V_2(I)} & \leq   \varepsilon,\quad \big\| \left<\nabla\right> e\big\|_{L^{2\frac{d+2}{d+4}}(I)} \leq   \varepsilon,
\end{align}
where $  0<\varepsilon \leq
\varepsilon_0=\varepsilon_0(E_0, E', L) $ is a small constant.
Then there exists a solution $u$ to \eqref{NLS} on $I\times \R^d$
with initial data $u(t_0)$ at time $t=t_0$ satisfying
\begin{align*}
\big\| \left<\nabla\right>\left(u-w\right) \big\|_{V_1(I)} \leq& C(E_0, E', L) \;
\varepsilon, \quad \text{and}\quad  \big\|\left<\nabla\right>u\big\|_{  S  (I)} \leq
C(E_0, E', L).
\end{align*}
\end{proposition}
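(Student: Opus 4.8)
The plan is to prove Proposition~\ref{stability} by the standard subinterval-iteration perturbation scheme (compare \cite{TaoVZ:NLS:combined, AkaIKN:NLS:combined:scattering, MiaoWZ:NLS:3d Combined}), carried out simultaneously at the two scaling levels dictated by the $\dot H^1$-critical term $|u|^{\frac4{d-2}}u$ and the $\dot H^{1/2}$-subcritical term $|u|^{\frac4{d-1}}u$. Writing $N_1(z):=|z|^{\frac4{d-2}}z$ and $N_2(z):=|z|^{\frac4{d-1}}z$ and setting $v:=u-w$, we have $v(t_0)=u(t_0)-w(t_0)$ and
\[
i\partial_t v+\Delta v=-\big(N_1(w+v)-N_1(w)\big)+\big(N_2(w+v)-N_2(w)\big)-e .
\]
The goal is to control $\langle\nabla\rangle v$ in the Strichartz norms of Section~\ref{S:pre} --- $S(I)\cap V_1(I)\cap V_2(I)$ together with the exotic norm $L^\gamma(I;\dot B^{\frac2d}_{\rho,2})$ ($=ES$) supplied by Lemma~\ref{strichartz:exotic} --- and to show that $\langle\nabla\rangle(u-w)$ is in fact $O(\varepsilon)$ in $V_1(I)$.

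First I would record an a priori bound. From $\|w\|_{ST(I)}\le L$ and $\|w(t_0)\|_{H^1}\le E_0$, Theorem~\ref{lwp} and Strichartz give $\|\langle\nabla\rangle w\|_{S^1(I)}\le C(E_0,L)$, and one may partition $I$ into $J=J(E_0,L,\delta)$ consecutive subintervals on each of which $\|w\|_{ST}$ --- and the remaining admissible norms of $\langle\nabla\rangle w$ --- is $\le\delta$, for a small $\delta=\delta(E_0)$ to be fixed. On the subinterval $I_1$ containing $t_0$ (the other time direction being symmetric) I run a continuity argument for $X_1:=\|\langle\nabla\rangle v\|_{S(I_1)\cap V_1(I_1)\cap V_2(I_1)\cap ES(I_1)}$: by the Duhamel formula, Lemma~\ref{Strichartz } (applied at regularities $0$ and $1$), Lemma~\ref{strichartz:exotic} for the $\dot H^1$-critical forcing, and the H\"older-continuity/product estimates of the nonlinear lemma (used with the exponent families $(A_i,B_i,W_i,V_i)$ and with $ES^*=L^2_t\dot B^{\frac2d}_{\frac{2d^2}{d^2+4},2}$ as the dual norm for the critical difference), one obtains schematically
\[
X_1\;\lesssim\;\|v(t_0)\|_{H^1}+\varepsilon+\big(\delta^{\theta_1}+X_1^{\theta_2}\big)\,X_1
\]
for some $\theta_1,\theta_2>0$; taking $\delta$ small gives the rough bound $X_1\le C(E_0,E',L)$, and iterating over the remaining subintervals --- the data for $I_{j+1}$ being $v$ at the left endpoint of $I_{j+1}$, whose $H^1$-norm is $\le\|\langle\nabla\rangle u\|_{S^1}+\|\langle\nabla\rangle w\|_{S^1}$ --- produces $\|\langle\nabla\rangle u\|_{S(I)}\le C(E_0,E',L)$.

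Next I would extract the $\varepsilon$-smallness. Since the rough bound gives $\|\langle\nabla\rangle v\|_{V_1(I)}\le\|\langle\nabla\rangle u\|_{V_1(I)}+\|\langle\nabla\rangle w\|_{V_1(I)}\le C(E_0,E',L)$, repartition $I$ into finitely many subintervals on each of which, in addition to the earlier smallness, $\|\langle\nabla\rangle v\|_{V_1}\le\delta$, and rerun the Duhamel estimate for $\|\langle\nabla\rangle v\|_{V_1(I_j)\cap V_2(I_j)}$ on each piece. Now every nonlinear contribution is a product of $\langle\nabla\rangle$ applied to one of $w,v$ with amplitude factors that are positive powers of $|w|$ or $|v|$ (the amplitude differences being handled by the H\"older continuity of $z\mapsto|z|^{\frac4{d-2}}z$, $|z|^{\frac4{d-1}}z$), so each such contribution carries a factor $\le\delta^{\theta}$ and
\[
\|\langle\nabla\rangle v\|_{V_1(I_j)\cap V_2(I_j)}\;\lesssim\;\varepsilon+\delta^{\theta}\,\|\langle\nabla\rangle v\|_{V_1(I_j)\cap V_2(I_j)},
\]
the $\varepsilon$ coming from $\|\langle\nabla\rangle e^{i(t-t_0)\Delta}v(t_0)\|_{V_2}\le\varepsilon$ and $\|\langle\nabla\rangle e\|_{L^{2\frac{d+2}{d+4}}}\le\varepsilon$ in \eqref{small:gamma0}, together with the bound already established on the preceding subintervals. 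Absorbing the last term and summing over the finitely many subintervals (the constant growing by only a fixed factor per step) yields $\|\langle\nabla\rangle(u-w)\|_{V_1(I)}\le C(E_0,E',L)\varepsilon$; with the previous step this proves the proposition.

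The step I expect to be the main obstacle is the bookkeeping that makes the two displays above actually hold, because the two nonlinearities sit at different scaling levels. The focusing term $N_1$ is $\dot H^1$-critical and, in dimensions $d\ge5$ where its power $1+\tfrac{4}{d-2}$ is close to $1$, cannot be closed by the standard Strichartz estimate alone; it must be routed through Foschi's exotic pair $(ES,ES^*)$ and the associated nonlinear estimate, which is precisely why the working norm has to include $L^\gamma(I;\dot B^{\frac2d}_{\rho,2})$. The defocusing term $N_2$ lives at the $\dot H^{1/2}$ level and is treated with the families $A_2,B_2,W_2,V_2$ and the usual dual Strichartz norms. One then has to choose these exponent families so that every term coming from $\langle\nabla\rangle\big(N_\ell(w+v)-N_\ell(w)\big)$, $\ell=1,2$ --- in particular the genuinely nonlinear cross terms in which $\langle\nabla\rangle$ falls on $w$ (or $u$) while the amplitude factor sits on $v$, or vice versa --- closes with a power of $\delta$ (after the refined partition) or of $X_1$ to spare; verifying that the resulting exponent identities match is exactly the content of the list $(q_i,r_i)$, $i=1,\dots,6$, in Section~\ref{S:pre}, so the whole estimate reduces to a careful, if lengthy, application of those lemmas.
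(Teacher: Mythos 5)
The paper itself does not prove this proposition --- it is quoted from \cite{AkaIKN:NLS:combined:scattering, MiaoWZ:NLS:3d Combined} --- so the comparison is with the argument those references (and the paper's own preliminary lemmas) are built for. Your outline has the right general shape (subinterval iteration, exotic Strichartz for the critical term), but there is a genuine gap in the two displayed closing estimates: you run the difference estimate at full regularity, measuring $\langle\nabla\rangle v$, and claim that every nonlinear contribution enters \emph{linearly} in that quantity with a small prefactor ($\delta^{\theta}$ or a spare power of $X_1$). For $d\geq 6$ this is false: the powers $\tfrac{4}{d-2}$ and $\tfrac{4}{d-1}$ are less than $1$, so $N_1'$, $N_2'$ are only H\"older continuous, and the cross term in $\langle\nabla\rangle\big(N_\ell(w+v)-N_\ell(w)\big)$ in which the derivative falls on $w$ is bounded only by $|\nabla w|\,|v|^{4/(d-2)}$ (resp.\ $|\nabla w|\,|v|^{4/(d-1)}$). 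In your first display this produces a term of size $\delta^{\theta}X_1^{4/(d-2)}$, a sublinear power of $X_1$, which cannot be absorbed to yield $X_1\lesssim\varepsilon$; in your second display the right-hand side is really $\varepsilon+\delta^{\theta}\|\langle\nabla\rangle v\|^{4/(d-2)}$, and absorption gives at best a fractional power of $\varepsilon$, degrading further under the subinterval iteration --- not the claimed $C\varepsilon$ bound. This is precisely the known obstruction to derivative-level stability for the energy-critical NLS in high dimensions.

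The route the cited proofs take (and which the paper's toolbox is set up for) is visible in Lemma \ref{strichartz:exotic} and the nonlinear estimate with H\"older-continuous $h_1,h_2$: there the factor playing the role of the difference appears \emph{without} a derivative, measured in $L^{\gamma}_t\dot B^{2/d}_{\rho,2}$, while the amplitudes $h_1(v)$, $h_2(v)$ are controlled by Strichartz norms of $u,w$ at the $\dot H^1$ and $\dot H^{1/2}$ levels. So the bootstrap is run for the difference in the low-regularity exotic norm, where the dependence on the difference is genuinely linear and the H\"older continuity only acts on the bounded amplitudes; the $S^1$-level bounds for $u$ are then recovered a posteriori (persistence of regularity), and the smallness of $\langle\nabla\rangle(u-w)$ in $V_1$ comes from combining the low-regularity smallness with the uniform energy bounds --- which is also why such statements are delicate at the level of the exact power of $\varepsilon$. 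Your proposal mentions the exotic pair but keeps $\langle\nabla\rangle$ on the difference throughout, so as written the closure fails for $d\geq 6$ (for $d=5$, where $4/(d-2)>1$, your scheme is much closer to working, but the proposition is claimed for all $d\geq5$); the argument needs to be restructured along the low-regularity-difference lines above.
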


\subsection{Monotonicity formula}
\begin{lemma}[\cite{Gla:NLS:blowup}]\label{L:virial}
Let $\phi\in C^{\infty}_0(\R^3)$, and $u$ be the solution of \eqref{NLS}. Then we have
\begin{align*}
\partial_t \int_{\R^d} \phi (x) \big| u(t,x)\big|^2\; dx
= &   -2 \Im \int_{\R^d}\nabla \phi \cdot \nabla \bar{u} \; u \; dx\\
\partial^2_t \int_{\R^d} \phi (x) \big| u(t,x)\big|^2\; dx
= &    4 \int_{\R^d}\phi_{ij}(x)  u_i(t,x)\bar{u}_j(t,x)\;  dx
- \int_{\R^d} \Delta^2 \phi \big| u(t,x)\big|^2\; dx\\
& - \frac{4}{d}\int_{\R^d} \Delta \phi \big| u(t,x)\big|^{2^*}\; dx+\frac{4}{d+1}\int_{\R^3} \Delta \phi \big| u(t,x)\big|^{\frac{2d+2}{d-1}}\; dx.
\end{align*}
\end{lemma}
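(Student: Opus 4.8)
This is the classical localized virial (Morawetz-type) identity for \eqref{NLS}, and the plan is the standard one: derive the pointwise mass and momentum conservation laws, pair them against $\phi$, and integrate by parts until every derivative falls on $\phi$. I would carry out the computation first for smooth, spatially decaying solutions (those coming from Schwartz data, or the higher-regularity solutions furnished by the local theory with mollified data), so that all manipulations below are legitimate, record the identities in their time-integrated form, and then pass to a general $H^1$ solution via the continuous dependence in Theorem \ref{lwp}, using that $\phi\in C^\infty_0(\R^d)$ makes each spatial integral continuous under $H^1$-convergence. This limiting step is the only part that is not pure bookkeeping.

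\textbf{First identity.} Writing $\partial_t|u|^2=2\Re(\bar u\,u_t)$ and substituting $u_t=i\big(\Delta u-f_1(u)-f_2(u)\big)$ from \eqref{NLS}, I would use that each $f_j(u)=h_j(|u|^2)u$ with $h_1(s)=-s^{2/(d-2)}$ and $h_2(s)=s^{2/(d-1)}$, so $\bar u f_j(u)=h_j(|u|^2)|u|^2$ is real and contributes nothing. This gives the continuity equation $\partial_t|u|^2=-2\,\Im(\bar u\,\Delta u)=-2\,\partial_j\,\Im(\bar u\,\partial_j u)$, since $\sum_j\partial_j\bar u\,\partial_j u=|\nabla u|^2$ is real. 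Multiplying by $\phi$, integrating, moving the divergence onto $\nabla\phi$, and using $\Im(\bar u\,\partial_j u)=-\Im(u\,\partial_j\bar u)$ yields the first line; the surviving diagonal term $\phi|\nabla u|^2$ is real and drops out under $\Im$.

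\textbf{Second identity.} This is the bulk of the work. Differentiating the momentum density $p_j:=2\,\Im(\bar u\,\partial_j u)$ in time and inserting the equations for $u_t$ and $\bar u_t$, the terms regroup, after taking real parts, into a linear and a nonlinear block. For the linear block I would use the elementary identity
\[
2\Re\big(\bar u\,\partial_j\Delta u\big)-2\Re\big(\Delta\bar u\,\partial_j u\big)=\partial_j\Delta|u|^2-4\,\partial_k\Re\big(\partial_j u\,\partial_k\bar u\big),
\]
which follows by expanding $\partial_k\big(\bar u\,\partial_j\partial_k u-\partial_k\bar u\,\partial_j u\big)$ together with $\partial_j\partial_k|u|^2=2\Re(\bar u\,\partial_j\partial_k u)+2\Re(\partial_j u\,\partial_k\bar u)$. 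For the nonlinear block I would use, for $f_j(u)=h_j(|u|^2)u$,
\[
2\Re\big(\overline{f_j(u)}\,\partial_k u\big)-2\Re\big(\bar u\,\partial_k f_j(u)\big)=\partial_k\Big(2H_j(|u|^2)-2|u|^2h_j(|u|^2)\Big),\qquad H_j'=h_j,
\]
and then the explicit powers give $2H_1(s)-2s\,h_1(s)=\tfrac4d\,s^{d/(d-2)}$ and $2H_2(s)-2s\,h_2(s)=-\tfrac{4}{d+1}\,s^{(d+1)/(d-1)}$. Collecting these produces the local momentum law
\[
\partial_t p_j=\partial_j\Delta|u|^2-4\,\partial_k\Re\big(\partial_j u\,\partial_k\bar u\big)+\partial_j\Big(\tfrac4d|u|^{2^*}-\tfrac{4}{d+1}|u|^{\frac{2d+2}{d-1}}\Big).
\]
Since $\partial_t\!\int\phi|u|^2=\int\partial_j\phi\,p_j$ by the first identity, one more time derivative gives $\partial_t^2\!\int\phi|u|^2=\int\partial_j\phi\,\partial_t p_j$, and integrating by parts to move all derivatives onto $\phi$ turns the three terms into $4\!\int\phi_{ij}u_i\bar u_j$ (using that $\phi_{ij}u_i\bar u_j$ is already real), $-\!\int\Delta^2\phi\,|u|^2$, and $-\tfrac4d\!\int\Delta\phi\,|u|^{2^*}+\tfrac4{d+1}\!\int\Delta\phi\,|u|^{\frac{2d+2}{d-1}}$, which is the asserted identity.

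\textbf{Main obstacle.} There is no genuine analytic difficulty here: once the local conservation laws are in place the identities are forced, and the work is keeping track of signs, factors of $2$, and the algebra of the potentials $H_j$. The single point deserving attention is regularity --- the second identity formally involves two spatial derivatives of $u$, which need not be locally $L^2$ for an $H^1$ solution --- so one must establish the identities for smooth solutions and then transfer them to $H^1$ by approximation, reading the second identity in its twice-integrated-in-time form during the limit; the compactness of the support of $\phi$ makes this routine.
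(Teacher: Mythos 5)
Your derivation is correct — the continuity equation, the local momentum law with the linear commutator identity and the potential functions $H_j$ (giving exactly the coefficients $\tfrac4d$ and $-\tfrac4{d+1}$), and the final integrations by parts all check out, as does the remark about proving the identity first for regular solutions and passing to $H^1$ by approximation. The paper gives no proof of this lemma, simply citing Glassey, and your argument is precisely the classical computation that citation refers to, so there is nothing to compare beyond noting the agreement.
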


\section{Variational characterization}\label{S:var} In this section, we show the threshold energy $m$
(Proposition \ref{threshold-energy}) by the variational method,
and various estimates for the solutions of \eqref{NLS} with the energy below
the threshold. The argument is the analogue as the case  $d=3$ in \cite{MiaoWZ:NLS:3d Combined}.

Let us denote the quadratic and nonlinear parts of $K$ by $K^Q$ and
$K^N$, that is,
\begin{align*}
K(\varphi)=K^Q(\varphi) + K^N(\varphi),
\end{align*}
where $K^Q(\varphi)= \displaystyle 2 \; \int_{\R^d} |\nabla \varphi|^2 \;
dx,$ and $K^N (\varphi)= \displaystyle \int_{\R^d} \left(-2 |  \varphi|^{\frac{2d}{d-2}} + \frac{2d}{d+1} |\varphi|^{\frac{2d+2}{d-1}} \right)\;
dx$.

\begin{lemma}\label{positive near origin:KQ }
For any $\varphi \in H^1(\R^d)$, we have
\begin{align}\label{asymptotic:KQ}
\lim_{\lambda \rightarrow -\infty}
K^Q(\varphi^{\lambda}_{d,-2}) =0.
\end{align}
\end{lemma}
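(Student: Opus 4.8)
The plan is a direct scaling computation; no compactness or functional-analytic input is needed. First I would differentiate the rescaled function: from $\varphi^{\lambda}_{d,-2}(x)= e^{d\lambda}\varphi(e^{2\lambda}x)$ the chain rule gives
\begin{align*}
\nabla\big(\varphi^{\lambda}_{d,-2}\big)(x)= e^{(d+2)\lambda}\,(\nabla\varphi)(e^{2\lambda}x),
\end{align*}
so that $\big|\nabla\big(\varphi^{\lambda}_{d,-2}\big)(x)\big|^2= e^{2(d+2)\lambda}\,\big|(\nabla\varphi)(e^{2\lambda}x)\big|^2$.

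Next I would plug this into the definition $K^Q(\psi)=2\int_{\R^d}|\nabla\psi|^2\,dx$ and perform the change of variables $y=e^{2\lambda}x$, whose Jacobian is $dx=e^{-2d\lambda}\,dy$. The $\lambda$-dependent prefactors then combine to $e^{2(d+2)\lambda}\cdot e^{-2d\lambda}=e^{4\lambda}$, which yields the exact identity
\begin{align*}
K^Q\big(\varphi^{\lambda}_{d,-2}\big)= e^{4\lambda}\,K^Q(\varphi)= 2e^{4\lambda}\int_{\R^d}|\nabla\varphi|^2\,dx
\end{align*}
valid for all $\lambda\in\R$. Since $\varphi\in H^1(\R^d)$ the right-hand side is finite, and letting $\lambda\to-\infty$ forces $e^{4\lambda}\to 0$, which gives \eqref{asymptotic:KQ}.

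There is essentially no obstacle here: the only points to watch are the bookkeeping of the scaling exponents (the amplitude contributes $e^{2d\lambda}$, the spatial dilation contributes $e^{4\lambda}$ from the two derivatives, and the Jacobian removes an $e^{2d\lambda}$) and the finiteness of $\|\nabla\varphi\|_{L^2(\R^d)}$, which is guaranteed by $\varphi\in H^1(\R^d)$. I would record the displayed identity $K^Q(\varphi^{\lambda}_{d,-2})=e^{4\lambda}K^Q(\varphi)$ explicitly, since it (and not merely the limit) is what gets used in the subsequent variational estimates.
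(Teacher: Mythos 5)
Your computation is correct, and it is exactly what the paper has in mind: the paper dismisses the lemma as ``obvious by the definition of $K^Q$,'' and your identity $K^Q(\varphi^{\lambda}_{d,-2})=e^{4\lambda}K^Q(\varphi)$ (consistent with $\LLL\|\nabla\varphi\|_{L^2}^2=4\|\nabla\varphi\|_{L^2}^2$ in Lemma \ref{structure:J}) is precisely the omitted scaling bookkeeping. No issues.
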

\begin{proof}
It is obvious by the definition of $K^Q$.
\end{proof}

Now we show the positivity of $K$ near 0 in the energy
space.
\begin{lemma}\label{Postivity:K} For any bounded sequence
$\varphi_n\in H^1(\R^d) \backslash\{0\}$ with
\begin{align*}
\lim_{n\rightarrow +\infty}K^Q(\varphi_n)=0,
\end{align*}
then for large $n$, we have
\begin{align*}
K(\varphi_n)>0.
\end{align*}
\end{lemma}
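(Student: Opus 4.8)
The plan is to show that as $K^Q(\varphi_n)\to 0$ the nonlinear part $K^N(\varphi_n)$ is negligible compared with $K^Q(\varphi_n)$, so that $K(\varphi_n)=K^Q(\varphi_n)+K^N(\varphi_n)$ inherits the (strict) positivity of $K^Q(\varphi_n)$. The point is that the focusing term in $K^N$ is $\dot H^1$-critical, hence controlled by a power of $\|\nabla\varphi_n\|_{L^2}^2$ that is \emph{strictly} larger than $1$, while the defocusing term has a favourable sign and can simply be dropped. Note that the $H^1$-boundedness of the sequence is not actually needed for this particular statement; only $\varphi_n\neq 0$ and $K^Q(\varphi_n)\to 0$ are used.

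Concretely, I would write
\[
K(\varphi_n)=K^Q(\varphi_n)-2\|\varphi_n\|_{L^{2d/(d-2)}}^{2d/(d-2)}+\frac{2d}{d+1}\|\varphi_n\|_{L^{(2d+2)/(d-1)}}^{(2d+2)/(d-1)}\ \ge\ K^Q(\varphi_n)-2\|\varphi_n\|_{L^{2d/(d-2)}}^{2d/(d-2)},
\]
the last term being nonnegative. Then apply the Sobolev inequality $\|\varphi_n\|_{L^{2d/(d-2)}}\lesssim\|\nabla\varphi_n\|_{L^2}$ together with $\|\nabla\varphi_n\|_{L^2}^2=\tfrac12 K^Q(\varphi_n)$ to obtain a constant $C=C(d)>0$ with
\[
\|\varphi_n\|_{L^{2d/(d-2)}}^{2d/(d-2)}\ \le\ C\,\bigl(K^Q(\varphi_n)\bigr)^{d/(d-2)},
\]
so that
\[
K(\varphi_n)\ \ge\ K^Q(\varphi_n)-2C\,\bigl(K^Q(\varphi_n)\bigr)^{d/(d-2)}\ =\ K^Q(\varphi_n)\Bigl(1-2C\,\bigl(K^Q(\varphi_n)\bigr)^{2/(d-2)}\Bigr).
\]
Since $\varphi_n\in H^1(\R^d)\setminus\{0\}$ and no nonzero constant lies in $L^2(\R^d)$, we have $\|\nabla\varphi_n\|_{L^2}>0$, hence $K^Q(\varphi_n)>0$ for every $n$; and because $K^Q(\varphi_n)\to 0$ with exponent $2/(d-2)>0$, there is $N$ such that $2C\,(K^Q(\varphi_n))^{2/(d-2)}<1$ for all $n\ge N$. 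Therefore $K(\varphi_n)>0$ for all $n\ge N$, which is the claim.

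There is no real obstacle here; the only two points that deserve a line of care are that the exponent $d/(d-2)$ appearing on the critical term is \emph{strictly} bigger than $1$ (this is exactly what makes the negative contribution $o\bigl(K^Q(\varphi_n)\bigr)$), and that $K^Q(\varphi_n)$ is strictly — not merely weakly — positive, which is where the hypothesis $\varphi_n\neq 0$ enters.
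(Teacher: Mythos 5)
Your proof is correct, and it rests on the same mechanism as the paper's: by Sobolev, the negative $\dot H^1$-critical term is bounded by a power $\bigl(K^Q(\varphi_n)\bigr)^{d/(d-2)}$ with exponent strictly greater than $1$, hence is $o\bigl(K^Q(\varphi_n)\bigr)$, while $K^Q(\varphi_n)>0$ because $\varphi_n\neq 0$ and no nonzero constant lies in $L^2(\R^d)$. The one genuine difference is the treatment of the subcritical term: the paper estimates $\|\varphi_n\|_{L^{(2d+2)/(d-1)}}^{(2d+2)/(d-1)}\lesssim \|\varphi_n\|_{L^2}^{2/(d-1)}\|\nabla\varphi_n\|_{L^2}^{2d/(d-1)}=o\bigl(\|\nabla\varphi_n\|_{L^2}^2\bigr)$ via Gagliardo--Nirenberg, which is precisely where the $H^1$ (really $L^2$) boundedness of the sequence is used, whereas you simply discard this term because it enters $K$ with the favourable sign $+\tfrac{2d}{d+1}$. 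Your variant is slightly more elementary and, as you observe, shows the boundedness hypothesis is not actually needed for this statement; the paper's extra estimate costs nothing and gives the marginally stronger conclusion $K(\varphi_n)\thickapprox \|\nabla\varphi_n\|_{L^2}^2$ (both nonlinear terms are negligible, not just the negative one), which is the form in which the lemma is invoked later. Either way the argument is complete.
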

\begin{proof} By the fact that $K^Q(\varphi_n) \rightarrow 0$, we know that
$\displaystyle
\lim_{n\rightarrow+\infty}\big\|\nabla\varphi_n\big\|^2_{L^2}=0.
$ Then by the Sobolev and Gagliardo-Nirenberg inequalities, we have for large
$n$
\begin{align*}
\big\|\varphi_n\big\|^{2^*}_{L^{2^*}_x} \lesssim  & \big\|\nabla
\varphi_n\big\|^{2^*}_{L^2_x} =  o(\big\|\nabla\varphi_n\big\|^2_{L^2}),\\
\big\|\varphi_n\big\|^{\frac{2d+2}{d-1}}_{L^\frac{2d+2}{d-1}_x}\lesssim &
\big\|\varphi_n\big\|^{\frac{2}{d-1}}_{L^2}\big\|\nabla \varphi_n\big\|^{\frac{2d}{d-1}}_{L^2}   =
o(\big\|\nabla\varphi_n\big\|^2_{L^2}),
\end{align*}
where we use the boundedness of $\big\|\varphi_n\big\|_{L^2}$. Hence for large $n$, we have
 \begin{align*} K(\varphi_n)= &  \int_{\R^d}
\left( 2 |\nabla \varphi_n|^2 -2|\varphi_n|^{2^*} +\frac{2d}{d+1} |\varphi_n|^{\frac{2d+2}{d-1}}
\right)\; dx \thickapprox   \int_{\R^d}  |\nabla \varphi_n|^2\; dx > 0.
\end{align*}
This concludes the proof.
\end{proof}

By the definition of $K$, we denote two real numbers
\begin{align*}
\bar{\mu} = \max\{4,0, \frac{4d}{d-1}\}=\frac{4d}{d-1}, \quad \underline{\mu}=\min\{4,0,
\frac{4d}{d-1}\}=0.
\end{align*}

Next, we show the behavior of the scaling derivative functional $K$ with respect to the scaling $\varphi^\lambda_{d,-2}$.

\begin{lemma}\label{structure:J}
For any $\varphi \in H^1$, we have
\begin{align*}
\left(\bar{\mu}-\LLL\right)E(\varphi) = & \int_{\R^d} \left( \frac{2}{d-1}
\big|\nabla \varphi\big|^2  + \frac{2}{d-1}
\big|\varphi\big|^{2^*} \right)\; dx, \\
\LLL  \left(\bar{\mu}-\LLL\right)E(\varphi) = &
\int_{\R^d}\left(\frac{8}{d-1}\big|\nabla \varphi\big|^2 +
\frac{8d}{(d-1)(d-2)}\big|\varphi\big|^{2^*} \right) \; dx,
\end{align*}
where the scaling derivative $\LLL$ is defined by \eqref{scaling
deriv:special}.
\end{lemma}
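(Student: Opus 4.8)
The plan is to exploit that $\LLL$ is the infinitesimal generator of the one--parameter scaling $\varphi\mapsto\varphi^{\lambda}_{d,-2}$ and that each of the three summands of $E$ is an eigenfunction of that scaling; then $\LLL$ acts on them by multiplication by a scalar weight, and the whole statement reduces to keeping track of three numbers.

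First I would record the weights. Writing $E(\varphi)=\tfrac12\int_{\R^d}|\nabla\varphi|^2\,dx+F_1(\varphi)+F_2(\varphi)$ and changing variables $y=e^{2\lambda}x$ in each integral, one checks that $\int_{\R^d}|\nabla\varphi^{\lambda}_{d,-2}|^2\,dx=e^{4\lambda}\int_{\R^d}|\nabla\varphi|^2\,dx$, that $F_1(\varphi^{\lambda}_{d,-2})=e^{\frac{4d}{d-2}\lambda}F_1(\varphi)$, and that $F_2(\varphi^{\lambda}_{d,-2})=e^{\frac{4d}{d-1}\lambda}F_2(\varphi)$ (equivalently, the weights $4$, $\tfrac{4d}{d-2}$, $\tfrac{4d}{d-1}$ can be read off by comparing the coefficients in $K=\LLL E$ from \eqref{scaling deriv:special} with those in $E$). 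Differentiating at $\lambda=0$ gives $\LLL\big(\tfrac12\int_{\R^d}|\nabla\varphi|^2\,dx\big)=2\int_{\R^d}|\nabla\varphi|^2\,dx$, $\LLL F_1=\tfrac{4d}{d-2}F_1$ and $\LLL F_2=\tfrac{4d}{d-1}F_2=\bar\mu F_2$ — the last identity being exactly the reason $\bar\mu$ is taken equal to $\tfrac{4d}{d-1}$.

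Consequently $\bar\mu-\LLL$ annihilates the $F_2$--part of $E$ and acts as multiplication by $\bar\mu-4=\tfrac{4}{d-1}$ on the kinetic term and by $\bar\mu-\tfrac{4d}{d-2}=-\tfrac{4d}{(d-1)(d-2)}$ on $F_1$; substituting $F_1(\varphi)=-\tfrac{d-2}{2d}\int_{\R^d}|\varphi|^{2^*}\,dx$ turns the $F_1$--contribution into $\tfrac{2}{d-1}\int_{\R^d}|\varphi|^{2^*}\,dx$, which gives the first identity. For the second, I would apply $\LLL$ once more to the right--hand side just obtained: $\int_{\R^d}|\nabla\varphi|^2\,dx$ has weight $4$ and $\int_{\R^d}|\varphi|^{2^*}\,dx$ has weight $\tfrac{4d}{d-2}$ (same computation as above, without the coefficients), so applying $\LLL$ multiplies the two terms by $4$ and $\tfrac{4d}{d-2}$ respectively, producing $\int_{\R^d}\big(\tfrac{8}{d-1}|\nabla\varphi|^2+\tfrac{8d}{(d-1)(d-2)}|\varphi|^{2^*}\big)\,dx$. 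The computation is entirely routine; the only point requiring care is to keep the three scaling weights correctly matched with the coefficients in the definitions of $E$ and $K$, and to confirm that the specific value $\bar\mu=\tfrac{4d}{d-1}$ is precisely the one that eliminates the subcritical term $F_2$.
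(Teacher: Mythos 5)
Your proposal is correct and follows essentially the same route as the paper: both arguments record the scaling weights $4$, $\tfrac{4d}{d-2}$, $\tfrac{4d}{d-1}$ of the three terms of $E$ under $\LLL$, observe that $\bar\mu=\tfrac{4d}{d-1}$ annihilates the subcritical term, and then apply $\LLL$ once more termwise to the resulting positive quantity. The only cosmetic difference is that the paper packages the first identity as $\bar\mu E(\varphi)-K(\varphi)$ rather than acting on each summand separately, which is the same calculation.
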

\begin{proof} By the definition of $\LLL$,
we have
\begin{align*}
\LLL \big\|\nabla \varphi\big\|^2_{L^2} = 4 \big\|\nabla
\varphi\big\|^2_{L^2},  \quad \LLL \big\|  \varphi\big\|^{2^*}_{L^{2^*}} =\frac{4d}{d-2} \big\|
\varphi\big\|^{2^*}_{L^{2^*}}, \quad  \LLL \big\|  \varphi\big\|^{\frac{2d+2}{d-1}}_{L^{\frac{2d+2}{d-1}}} = \frac{4d}{d-1}
\big\| \varphi\big\|^{\frac{2d+2}{d-1}}_{L^{\frac{2d+2}{d-1}}},
\end{align*}
which implies that
\begin{align*}
\left(\bar{\mu}-\LLL\right)E(\varphi) = & \bar{\mu}E(\varphi)-K(\varphi) \\
 = &
\int_{\R^d} \left(  \frac{2}{d-1} \big|\nabla \varphi\big|^2
+ \frac{2}{d-1} \big|\varphi\big|^{2^*} \right)\; dx,
\\
\LLL  \left(\bar{\mu}-\LLL\right)E(\varphi) = &   \frac{2}{d-1} \LLL \big\|\nabla
\varphi\big\|^2_{L^2} +  \frac{2}{d-1} \LLL
\big\|  \varphi\big\|^{2^*}_{L^{2^*}}  \\
 = &  \int_{\R^d} \left(\frac{8}{d-1}  \big|\nabla
\varphi\big|^2 + \frac{8d}{(d-1)(d-2)} \big| \varphi\big|^{2^*} \right) \; dx.
\end{align*}
This completes the proof.
\end{proof}

According to the above analysis, we will replace the functional
$E$ in \eqref{minimization} with a positive functional $H$, while
extending the minimizing region from ''the mountain ridge
$K(\varphi)=0$'' to ``the mountain flank $K(\varphi)\leq 0$''. Let
\begin{align*}
H(\varphi):= \left(1 - \frac{\LLL}{\bar{\mu}}\right) E(\varphi)
=&\int_{\R^d} \left( \frac1{2d} \big|\nabla \varphi\big|^2 +
\frac1{2d} \big|\varphi\big|^{2^*} \right)\;
dx,
\end{align*}
then for any $\varphi \in H^1 \backslash\{0\}$, we have
\begin{align*}
H(\varphi) > 0 , \quad  \LLL H(\varphi) \geq 0.
\end{align*}

Now by the similar argument to that in \cite{MiaoWZ:NLS:3d Combined}, we can characterization the minimization problem \eqref{minimization} by
making use of $H$.
\begin{lemma}[\cite{MiaoWZ:NLS:3d Combined}, Lemma 2.9]\label{minimization:H} For the minimization $m$ in \eqref{minimization}, we
have
\begin{align}
m  =& \inf  \{ H(\varphi)\; |\;  \varphi \in H^1(\R^d), \; \varphi\not =0,\;
K(\varphi) \leq 0  \} \nonumber\\
 =& \inf \{ H(\varphi)\; |\;\varphi \in H^1(\R^d), \; \varphi\not =0, \;
K(\varphi)< 0 \}. \label{JEqualH}
\end{align}
\end{lemma}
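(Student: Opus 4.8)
The plan is to prove the chain of equalities by establishing two inequalities for the middle quantity and then showing the third quantity coincides with it. Denote
\[
m_1 = \inf\{H(\varphi)\;|\;\varphi\in H^1,\ \varphi\neq 0,\ K(\varphi)\leq 0\},\qquad
m_2 = \inf\{H(\varphi)\;|\;\varphi\in H^1,\ \varphi\neq 0,\ K(\varphi)< 0\}.
\]
First I would show $m_1\leq m$. Since $H$ and $E$ agree on the set $\{K(\varphi)=0\}$ — indeed $H(\varphi) = E(\varphi) - \frac{1}{\bar\mu}K(\varphi)$, so $H = E$ whenever $K = 0$ — and the constraint set $\{K=0\}$ is contained in $\{K\leq 0\}$, the infimum defining $m_1$ is taken over a larger set with a functional matching $E$ on the original set, hence $m_1 \leq m$.

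The substantive direction is $m \leq m_1$, together with $m_1 = m_2$. For this I would argue that any $\varphi$ with $K(\varphi)\leq 0$ can be deformed, via the scaling $\varphi\mapsto\varphi^\lambda_{d,-2}$, to a function on the boundary $\{K=0\}$ without increasing $H$. Consider $g(\lambda) = K(\varphi^\lambda_{d,-2})$. By Lemma \ref{positive near origin:KQ } and Lemma \ref{Postivity:K}, $g(\lambda)>0$ for $\lambda$ sufficiently negative (the $K^Q$ term dominates as $\lambda\to-\infty$), while $g(0) = K(\varphi)\leq 0$; by continuity there is some $\lambda_0\leq 0$ with $K(\varphi^{\lambda_0}_{d,-2}) = 0$, so that $\varphi^{\lambda_0}_{d,-2}$ is admissible for the problem defining $m$, giving $m \leq E(\varphi^{\lambda_0}_{d,-2}) = H(\varphi^{\lambda_0}_{d,-2})$. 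It then remains to check $H(\varphi^{\lambda_0}_{d,-2})\leq H(\varphi)$, i.e. that $\lambda\mapsto H(\varphi^\lambda_{d,-2})$ is nondecreasing, so that decreasing $\lambda$ from $0$ to $\lambda_0\leq 0$ does not increase $H$. This follows from the monotonicity fact recorded just before the lemma: $\LLL H(\varphi)\geq 0$ for every $\varphi\neq 0$, and applying this at each $\varphi^\lambda_{d,-2}$ shows $\frac{d}{d\lambda}H(\varphi^\lambda_{d,-2}) = (\LLL H)(\varphi^\lambda_{d,-2})\geq 0$. Hence $m\leq H(\varphi)$ for every admissible $\varphi$ in the $m_1$-problem, so $m\leq m_1$, and combined with the previous paragraph, $m = m_1$.

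For $m_1 = m_2$: since $\{K<0\}\subset\{K\leq 0\}$ we trivially get $m_1\leq m_2$. For the reverse, given $\varphi$ with $K(\varphi)\leq 0$ and any $\delta>0$, I would perturb slightly — for instance take $\varphi^\lambda_{d,-2}$ with $\lambda>0$ small. Using the expansion of $K$ under the scaling (the explicit formula for $\LLL E$ together with the structure given in Lemma \ref{structure:J}, which shows how the quadratic and critical pieces scale), one checks that moving $\lambda$ slightly positive from a point where $K\leq 0$ pushes $K$ strictly negative, while $H$ changes by at most $\delta$ by continuity of $\lambda\mapsto H(\varphi^\lambda_{d,-2})$. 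This produces an admissible competitor for $m_2$ with $H$-value at most $H(\varphi)+\delta$, and letting $\delta\to 0$ gives $m_2\leq m_1$.

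The main obstacle is the deformation argument showing that decreasing $\lambda$ keeps $H$ from increasing while some $\lambda_0\leq 0$ lands exactly on $\{K=0\}$: one must be careful that the relevant point $\lambda_0$ indeed satisfies $\lambda_0\leq 0$ (so that the monotonicity of $H$ is used in the favorable direction) and that no degenerate behavior (e.g. $\varphi$ with $\nabla\varphi\equiv 0$, which is excluded since $\varphi\in H^1\setminus\{0\}$ forces... actually one should note $K^Q(\varphi)>0$ for $\varphi\neq 0$) spoils the sign analysis of $g$. All of this is handled by the preparatory lemmas, so the proof is a matter of assembling them in the right order; the computation of $\LLL H$ and the scaling identities are routine given Lemma \ref{structure:J}.
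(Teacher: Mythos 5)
Your argument is correct and is essentially the paper's (i.e.\ the cited) proof: use $H=E-\tfrac{1}{\bar\mu}K$ so that $H=E$ on $\{K=0\}$, deform a function with $K(\varphi)<0$ by the scaling $\varphi^{\lambda}_{d,-2}$ down to some $\lambda_0<0$ on $\{K=0\}$ (Lemmas \ref{positive near origin:KQ } and \ref{Postivity:K} plus continuity), exploit $\LLL H\geq 0$ to keep $H$ from increasing, and pass between the constraints $K\leq 0$ and $K<0$ by a small positive scaling. The only step you left as ``one checks'' is exactly the identity from Lemma \ref{structure:J}, $\LLL K(\varphi)=\bar\mu K(\varphi)-\int\big(\tfrac{8}{d-1}|\nabla\varphi|^2+\tfrac{8d}{(d-1)(d-2)}|\varphi|^{2^*}\big)dx<0$ whenever $K(\varphi)\leq 0$ and $\varphi\neq 0$, which justifies the perturbation.
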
%

Next we will use the ($\dot H^1$-invariant) scaling argument to remove the $\dot H^1$-subcritical growth term
$\displaystyle \int_{\R^d} \big|\varphi\big|^{\frac{2d+2}{d-1}}\; dx$
in $K$,
that is, to replace  the constrained
condition $K(\varphi) < 0$ with $K^c(\varphi) < 0$, where
\begin{align*}
K^{c}(\varphi):= \int_{\R^d} \left( 2 |\nabla \varphi|^2
-2|\varphi|^{2^*} \right)\; dx.
\end{align*}

In fact, we have

\begin{lemma}[\cite{MiaoWZ:NLS:3d Combined}, Lemma 2.10]\label{minimization:Hc}For the minimization $m$ in \eqref{minimization}, we
have
\begin{align*} m =& \inf \{ H(\varphi)\; |\; \varphi \in
H^1(\R^d),\; \varphi\not=0, \;
K^c(\varphi) < 0 \}\\
 =& \inf \{ H(\varphi)\; |\; \varphi \in   H^1(\R^d),\; \varphi\not =0, \;
K^c(\varphi) \leq  0 \}.
\end{align*}
\end{lemma}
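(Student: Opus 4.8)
The plan is to prove both identities by a squeeze argument, comparing the $K^c$-constrained minimization sets with the $K$-constrained ones appearing in Lemma \ref{minimization:H}, and transporting functions between them by means of the $\dot H^1$-invariant scaling (which is \emph{not} the scaling $\varphi\mapsto\varphi^{\lambda}_{d,-2}$ used to define $K$).

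First I would record the trivial inclusions. Since
\begin{align*}
K(\varphi)=K^c(\varphi)+\frac{2d}{d+1}\int_{\R^d}|\varphi|^{\frac{2d+2}{d-1}}\,dx\ \ge\ K^c(\varphi),
\end{align*}
any $\varphi$ with $K(\varphi)\le 0$ (resp.\ $K(\varphi)<0$) automatically satisfies $K^c(\varphi)\le 0$ (resp.\ $K^c(\varphi)<0$). Hence the infimum of $H$ over each $K^c$-constrained set is no larger than over the corresponding $K$-constrained set, which by Lemma \ref{minimization:H} equals $m$. This yields ``$\le m$'' for both infima.

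For the reverse inequality in the strict case, I would introduce the scaling $\varphi_\mu(x):=\mu^{(d-2)/2}\varphi(\mu x)$, $\mu>0$, under which $\|\nabla\varphi_\mu\|_{L^2}^2=\|\nabla\varphi\|_{L^2}^2$ and $\|\varphi_\mu\|_{L^{2^*}}^{2^*}=\|\varphi\|_{L^{2^*}}^{2^*}$, so that \emph{both} $K^c$ and $H$ are invariant: $K^c(\varphi_\mu)=K^c(\varphi)$ and $H(\varphi_\mu)=H(\varphi)$. On the other hand, a scaling-exponent count gives $\|\varphi_\mu\|_{L^{(2d+2)/(d-1)}}^{(2d+2)/(d-1)}=\mu^{-2/(d-1)}\|\varphi\|_{L^{(2d+2)/(d-1)}}^{(2d+2)/(d-1)}\to 0$ as $\mu\to\infty$, whence $K(\varphi_\mu)=K^c(\varphi)+\frac{2d}{d+1}\|\varphi_\mu\|_{L^{(2d+2)/(d-1)}}^{(2d+2)/(d-1)}\to K^c(\varphi)$. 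Thus if $K^c(\varphi)<0$, then $K(\varphi_\mu)<0$ for $\mu$ large, and Lemma \ref{minimization:H} gives $H(\varphi)=H(\varphi_\mu)\ge m$; taking the infimum over all $\varphi\ne 0$ with $K^c(\varphi)<0$ yields ``$\ge m$'', hence equality.

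Finally, for the non-strict constraint it remains to show $\inf\{H(\varphi):\varphi\ne 0,\ K^c(\varphi)\le 0\}\ge m$. Given such a $\varphi$, the case $K^c(\varphi)<0$ is already done; if $K^c(\varphi)=0$ then $\|\nabla\varphi\|_{L^2}^2=\|\varphi\|_{L^{2^*}}^{2^*}$, and for $\theta>1$ one has $K^c(\theta\varphi)=2\big(\theta^2-\theta^{2^*}\big)\|\nabla\varphi\|_{L^2}^2<0$ (since $2^*>2$) while $H(\theta\varphi)\to H(\varphi)$ as $\theta\to 1^+$, so $m\le H(\theta\varphi)\to H(\varphi)$. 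Passing to the infimum completes the proof. The only genuinely delicate point — the ``main obstacle'' — is recognizing that one must use the $\dot H^1$-invariant scaling rather than $\varphi^{\lambda}_{d,-2}$, precisely because it leaves both $K^c$ and the functional $H$ unchanged while allowing the $\dot H^1$-subcritical term in $K$ to be driven to zero; everything else is bookkeeping of scaling exponents.
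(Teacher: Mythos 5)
Your proof is correct, and it follows essentially the same route the paper indicates for this lemma (proved in the cited 3d companion paper): the inequality $K^c(\varphi)\le K(\varphi)$ coming from the defocusing sign of the perturbation gives one direction, while the $\dot H^1$-invariant rescaling $\varphi_\mu(x)=\mu^{(d-2)/2}\varphi(\mu x)$, which fixes $H$ and $K^c$ and sends the subcritical term to zero, gives the other. Your handling of the boundary case $K^c(\varphi)=0$ by dilating the amplitude $\theta\to 1^+$ is a clean and standard way to pass from the strict to the non-strict constraint.
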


The above result holds for the defocusing perturbation, which implies that $K^c(\varphi)\leq K(\varphi)$. While the argument does not hold
for the focusing perturbation from the proof in \cite{MiaoWZ:NLS:3d Combined}. Please refer to \cite{AkaIKN:NLS:combined:blowup, AkaIKN:NLS:combined:scattering} for the related discussions.
After these preparations, we can now make use of the sharp constant of the Sobolev inequality in \cite{Aubin:Sharp contant:Sobolev, Talenti:best constant} to compute the minimization $m$ as
following.

\begin{lemma}[\cite{MiaoWZ:NLS:3d Combined}, Lemma 2.11]\label{threshold} For the minimization $m$ in \eqref{minimization}, we
have
\begin{align*}
m=E^c(W).
\end{align*}
\end{lemma}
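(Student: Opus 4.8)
\emph{Proof plan.} The idea is to reduce, via Lemma~\ref{minimization:Hc}, to the reformulation
\[
m = \inf\bigl\{\, H(\varphi) \;\big|\; \varphi\in H^1(\R^d),\ \varphi\neq 0,\ K^c(\varphi)\leq 0 \,\bigr\},
\]
and then to express both $m$ and $E^c(W)$ in terms of the single quantity $\|\nabla W\|_{L^2}^2$ by means of the sharp Sobolev inequality of \cite{Aubin:Sharp contant:Sobolev, Talenti:best constant}. First I would record two facts about the ground state $W$. Since $-\Delta W = |W|^{4/(d-2)}W$, testing against $W$ and integrating by parts gives $\|\nabla W\|_{L^2}^2 = \|W\|_{L^{2^*}}^{2^*}$, i.e. $K^c(W)=0$; and $W$ saturates the sharp Sobolev inequality $\|f\|_{L^{2^*}} \leq C_d \|\nabla f\|_{L^2}$, so $\|W\|_{L^{2^*}} = C_d \|\nabla W\|_{L^2}$. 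Combining these two identities, and using $2^*=\tfrac{2d}{d-2}$ so that $2^*-2=\tfrac{4}{d-2}$ and $\tfrac{2^*}{2^*-2}=\tfrac d2$, one obtains $\|\nabla W\|_{L^2}^2 = C_d^{-d}$. A short computation then gives $H(W) = \tfrac1{2d}\bigl(\|\nabla W\|_{L^2}^2 + \|W\|_{L^{2^*}}^{2^*}\bigr) = \tfrac1d \|\nabla W\|_{L^2}^2$ and, likewise, $E^c(W) = \bigl(\tfrac12-\tfrac{d-2}{2d}\bigr)\|\nabla W\|_{L^2}^2 = \tfrac1d\|\nabla W\|_{L^2}^2$, so $H(W)=E^c(W)$. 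Since $W\in H^1(\R^d)$ for $d\geq 5$ (the Aubin--Talenti profile decays like $|x|^{-(d-2)}$, which is square-integrable exactly in these dimensions, cf. Proposition~\ref{threshold-energy}), $W\neq 0$ and $K^c(W)=0\leq 0$, the function $W$ is admissible in the infimum, whence $m\leq H(W)=E^c(W)$.

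For the matching lower bound I would estimate $H$ from below on the admissible set. Let $\varphi\in H^1(\R^d)\setminus\{0\}$ with $K^c(\varphi)\leq 0$, that is $\|\nabla\varphi\|_{L^2}^2 \leq \|\varphi\|_{L^{2^*}}^{2^*}$. This inequality already yields $H(\varphi) = \tfrac1{2d}\bigl(\|\nabla\varphi\|_{L^2}^2 + \|\varphi\|_{L^{2^*}}^{2^*}\bigr) \geq \tfrac1d\|\nabla\varphi\|_{L^2}^2$. On the other hand, inserting the sharp Sobolev bound $\|\varphi\|_{L^{2^*}}^{2^*}\leq C_d^{2^*}\|\nabla\varphi\|_{L^2}^{2^*}$ into $\|\nabla\varphi\|_{L^2}^2\leq\|\varphi\|_{L^{2^*}}^{2^*}$ and dividing by $\|\nabla\varphi\|_{L^2}^2>0$ gives $1\leq C_d^{2^*}\|\nabla\varphi\|_{L^2}^{2^*-2}$, hence $\|\nabla\varphi\|_{L^2}^2\geq C_d^{-d}=\|\nabla W\|_{L^2}^2$. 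Combining the two estimates gives $H(\varphi)\geq\tfrac1d\|\nabla W\|_{L^2}^2=E^c(W)$ for every admissible $\varphi$, so $m\geq E^c(W)$. Together with the previous paragraph this proves $m=E^c(W)$.

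The only genuinely non-elementary ingredient here is the sharp Sobolev inequality together with the identification of its extremizers with $W$ (up to the $\dot H^1$-invariances); everything else is the exponent bookkeeping $2^*-2=\tfrac{4}{d-2}$, $\tfrac{2^*}{2^*-2}=\tfrac d2$, which I expect to be the only point requiring care but which poses no real obstacle. I would also emphasize that the argument only ever uses the one-sided comparison with $W$ and never the attainment of the infimum, consistent with the non-existence statement of Proposition~\ref{threshold-energy}.
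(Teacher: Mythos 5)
Your proposal is correct and follows essentially the same route the paper indicates: reduce via Lemma \ref{minimization:Hc} to minimizing $H$ under $K^c\leq 0$ and then invoke the sharp Sobolev constant of Aubin--Talenti together with the identities $K^c(W)=0$, $H(W)=E^c(W)=\tfrac1d\|\nabla W\|_{L^2}^2$. Your observation that $W\in H^1(\R^d)$ for $d\geq 5$, so that $W$ itself is admissible for the upper bound (no truncation of $W$ is needed, unlike the $3$d case of the cited reference), is accurate and the exponent bookkeeping checks out.
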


After the computation of the minimization $m$ in
\eqref{minimization}, we now give some useful variational estimates.

\begin{lemma}[\cite{MiaoWZ:NLS:3d Combined}, Lemma 5.4]\label{L:close of K}
Let $k\in \N$ and $\varphi_0, \ldots, \varphi_k \in H^1(\R^d)$.
Assume that there exist some $\delta$, $\varepsilon>0$ with $ (3d-1)
\varepsilon  < 2d \delta$ such that
\begin{align*}
\sum^k_{j=0}E(\varphi_j) - \varepsilon \leq
E\left(\sum^k_{j=0}\varphi_j\right) < m-\delta,\;\; \text{and}\;\;
 -\varepsilon \leq
K\left(\sum^k_{j=0}\varphi_j\right) \leq
\sum^k_{j=0}K(\varphi_j)+\varepsilon.
\end{align*}
Then $\varphi_j \in \KKK^+$ for all $j=0, \ldots, k$.
\end{lemma}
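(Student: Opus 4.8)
The plan is to transfer the two hypotheses from the (non sign-definite) energy $E$ to the \emph{positive} functional $H$ and then invoke the variational characterization of $m$ in Lemma~\ref{minimization:H}. First I would record the pointwise identity
\[
H(\varphi)\;=\;\Big(1-\tfrac{\LLL}{\bar{\mu}}\Big)E(\varphi)\;=\;E(\varphi)-\tfrac{1}{\bar{\mu}}K(\varphi),\qquad \bar{\mu}=\tfrac{4d}{d-1},
\]
which is immediate from $K=\LLL E$ and the definition of $H$. Summing over $j=0,\dots,k$ gives $\sum_j H(\varphi_j)=\sum_j E(\varphi_j)-\tfrac1{\bar\mu}\sum_j K(\varphi_j)$. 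Now I would plug in the assumptions: from $\sum_j E(\varphi_j)-\varepsilon\le E\big(\sum_j\varphi_j\big)<m-\delta$ one gets $\sum_j E(\varphi_j)<m-\delta+\varepsilon$, while from $-\varepsilon\le K\big(\sum_j\varphi_j\big)\le\sum_j K(\varphi_j)+\varepsilon$ one gets $\sum_j K(\varphi_j)\ge-2\varepsilon$, hence $-\tfrac1{\bar\mu}\sum_j K(\varphi_j)\le\tfrac{2\varepsilon}{\bar\mu}=\tfrac{(d-1)\varepsilon}{2d}$. Adding,
\[
\sum_{j=0}^{k}H(\varphi_j)\;<\;m-\delta+\Big(1+\tfrac{2}{\bar\mu}\Big)\varepsilon\;=\;m-\delta+\tfrac{3d-1}{2d}\,\varepsilon\;<\;m,
\]
the last inequality being precisely the hypothesis $(3d-1)\varepsilon<2d\delta$. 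Since $H\ge0$ pointwise (it is a sum of nonnegative integrals), this forces $H(\varphi_j)<m$ for every $j$.

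Next I would upgrade this to the per-profile sign of $K$. If some $\varphi_j\neq0$ satisfied $K(\varphi_j)\le0$, it would be an admissible competitor in the first infimum of Lemma~\ref{minimization:H}, so $H(\varphi_j)\ge m$, contradicting $H(\varphi_j)<m$; and if $\varphi_j=0$ then $K(\varphi_j)=E(\varphi_j)=0$. Hence $K(\varphi_j)\ge0$ for all $j$. For the energy bound, the identity above together with $K(\varphi_j)\ge0$ and $H(\varphi_j)\ge0$ gives $E(\varphi_j)=H(\varphi_j)+\tfrac1{\bar\mu}K(\varphi_j)\ge0$; combining with $\sum_j E(\varphi_j)<m-\delta+\varepsilon<m$ (note $\tfrac{3d-1}{2d}>1$, so $\varepsilon<\delta$) yields $E(\varphi_j)\le\sum_i E(\varphi_i)<m$. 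Thus $E(\varphi_j)<m$ and $K(\varphi_j)\ge0$, i.e.\ $\varphi_j\in\KKK^+$ for all $j$.

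The whole argument is bookkeeping with the two $\varepsilon$-errors; the only point requiring care — and the reason the precise constant $\tfrac{3d-1}{2d}=1+\tfrac{2}{\bar\mu}$ appears in the hypothesis — is that one $\varepsilon$-loss comes from the energy and one from the scaling functional, and both must be absorbed into $\delta$. I do not expect a serious obstacle: the single structural input is that $E$ is not sign-definite, so one cannot argue with $E(\varphi_j)$ directly and must route everything through the positive functional $H$ and the two equivalent descriptions of $m$ in Lemma~\ref{minimization:H}.
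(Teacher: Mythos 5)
Your proof is correct and is essentially the intended argument: the paper itself only cites \cite{MiaoWZ:NLS:3d Combined} for this lemma, and the standard proof there is exactly your bookkeeping through the positive functional $H=E-\tfrac{1}{\bar\mu}K$ combined with the variational characterization of $m$ in Lemma~\ref{minimization:H}, as the matching constant $\tfrac{3d-1}{2d}=1+\tfrac{2}{\bar\mu}$ in the hypothesis confirms. The two $\varepsilon$-losses are absorbed correctly and the final step ($E(\varphi_j)\ge 0$ once $K(\varphi_j)\ge 0$, then summing) is sound, so there is nothing to add.
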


\begin{lemma} \label{free-energ-equiva} For $d\geq 5$ and any $\varphi \in H^1$ with $K(\varphi)\geq 0$, we
have
\begin{align}\label{free energy}
\int_{\R^d} \left(\frac1{2d} \big|\nabla \varphi \big|^2  + \frac1{2d} \big| \varphi\big|^{2^*}\right)  dx \leq
E(\varphi) \leq \int_{\R^d} \left(\frac12\big|\nabla \varphi \big|^2
  + \frac{d-1}{2d+2} \big| \varphi\big|^{\frac{2d+2}{d-1}}
\right) dx.
\end{align}
\end{lemma}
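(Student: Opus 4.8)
The plan is to read the bound off the decomposition of the energy that was already assembled in Lemma \ref{structure:J} and in the definition of $H$ in Section \ref{S:var}, so essentially no new work is needed; both halves of \eqref{free energy} reduce to sign considerations.

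First I would record the identity that drives everything. Since $\bar{\mu}=\frac{4d}{d-1}$ and $K=\LLL E$, the functional $H$ defined above satisfies
\[
H(\varphi)=\Big(1-\frac{\LLL}{\bar{\mu}}\Big)E(\varphi)=E(\varphi)-\frac{d-1}{4d}\,K(\varphi),
\]
and, as displayed right after the definition of $H$,
\[
H(\varphi)=\int_{\R^d}\Big(\frac1{2d}\big|\nabla\varphi\big|^2+\frac1{2d}\big|\varphi\big|^{2^*}\Big)\,dx .
\]
The $|\varphi|^{\frac{2d+2}{d-1}}$ term cancels in $E-\frac{d-1}{4d}K$ precisely because $\frac{d-1}{2d+2}=\frac{d-1}{4d}\cdot\frac{2d}{d+1}$, and the remaining coefficient bookkeeping is the same arithmetic as in the proof of Lemma \ref{structure:J}. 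For the left inequality of \eqref{free energy} I would then rewrite $E(\varphi)=H(\varphi)+\frac{d-1}{4d}K(\varphi)$; since $d\geq 5$ makes $\frac{d-1}{4d}>0$ and the hypothesis gives $K(\varphi)\geq 0$, this yields $E(\varphi)\geq H(\varphi)$, which is exactly the claimed lower bound.

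For the right inequality of \eqref{free energy} I would simply return to the definition $E(\varphi)=\int_{\R^d}\frac12|\nabla\varphi|^2\,dx+F_1(\varphi)+F_2(\varphi)$, observe that $F_1(\varphi)=-\frac{d-2}{2d}\int_{\R^d}|\varphi|^{2^*}\,dx\leq 0$ because $d\geq 5>2$, and discard this nonpositive term; since $F_2(\varphi)=\frac{d-1}{2d+2}\int_{\R^d}|\varphi|^{\frac{2d+2}{d-1}}\,dx$, this gives $E(\varphi)\leq\int_{\R^d}\big(\frac12|\nabla\varphi|^2+\frac{d-1}{2d+2}|\varphi|^{\frac{2d+2}{d-1}}\big)\,dx$.

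There is no genuine obstacle here: both inequalities are sign statements once $E$ is written as $H+\frac{1}{\bar{\mu}}K$ and as (kinetic term)$+F_1+F_2$. The only points to keep straight are that the $\dot H^1$-subcritical term really drops out of $E-\frac1{\bar{\mu}}K$, and that the hypothesis $K(\varphi)\geq 0$ is used only for the lower bound — the upper bound in fact holds for every $\varphi\in H^1(\R^d)$.
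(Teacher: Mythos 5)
Your proof is correct and follows essentially the same route as the paper: the lower bound comes from the identity $E(\varphi)=H(\varphi)+\frac{d-1}{4d}K(\varphi)$ (the subcritical term cancelling exactly as you checked) together with $K(\varphi)\geq 0$, and the upper bound is just discarding the nonpositive term $F_1(\varphi)$, which the paper dismisses as trivial.
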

\begin{proof} On one hand, the second inequality in \eqref{free energy} is trivial.  On the other hand,
by the definition of $E$ and $K$, we have
\begin{align*}
E(\varphi)= \int_{\R^d} \left(\frac1{2d} \big|\nabla \varphi \big|^2 +
\frac1{2d } \big| \varphi\big|^{2^*}\right)\;
dx + \frac{d-1}{4d}K(\varphi),
\end{align*}
which implies the first inequality in \eqref{free energy}.
\end{proof}

At the last of this part, we give the uniform bounds on the
scaling derivative functional $K(\varphi)$ with the   energy
$E(\varphi)$ below the threshold $m$, which plays an important role for
the blow-up and scattering analysis in Section \ref{S:blow up} and Section \ref{S:GWP-Scattering}.

\begin{lemma}[\cite{MiaoWZ:NLS:3d Combined}, Lemma 2.13]\label{uniform bound}
For any $\varphi \in H^1$ with $E(\varphi)<m$, then there exists a
constant $\delta>0$ such that
\begin{enumerate}
\item If $K(\varphi)<0$, then
\begin{align}\label{uniform:K:negative}
K(\varphi) \leq -\bar{\mu}\big(m-E(\varphi)\big).
\end{align}
\item If $K(\varphi)\geq 0$, then
\begin{align}\label{uniform:K:positive}
K(\varphi)\geq \min\left(\bar{\mu}\Big(m-E(\varphi)\Big), \frac{2}{2d-3} \big\|\nabla
\varphi \big\|^2_{L^2} +  \frac{2d}{(d+1)(2d-3)} \big\|\varphi\big\|^{\frac{2d+2}{d-1}}_{L^{
\frac{2d+2}{d-1}}}
\right).
\end{align}
\end{enumerate}
\end{lemma}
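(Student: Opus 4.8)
The plan is to prove the two alternatives separately: part~(1) is immediate from the variational picture, while part~(2) is reduced, by contradiction, to a one-variable inequality for the sharp Sobolev constant. For part~(1): $K(\varphi)<0$ forces $\varphi\ne0$, so Lemma~\ref{minimization:H} gives $H(\varphi)\ge m$. Since $H(\varphi)=E(\varphi)-\tfrac1{\bar\mu}K(\varphi)$ by the definition of $H$ (equivalently, the first identity in Lemma~\ref{structure:J}), we get $K(\varphi)=\bar\mu\bigl(E(\varphi)-H(\varphi)\bigr)\le\bar\mu\bigl(E(\varphi)-m\bigr)=-\bar\mu\bigl(m-E(\varphi)\bigr)$, which is \eqref{uniform:K:negative}.

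For part~(2), assume $K(\varphi)\ge0$; the claim is trivial if $\varphi=0$, so let $\varphi\ne0$, and set $q:=\tfrac{2d+2}{d-1}$, $2^*:=\tfrac{2d}{d-2}$. The elementary identity
\[
K(\varphi)-\Bigl(\tfrac{2}{2d-3}\|\nabla\varphi\|_{L^2}^2+\tfrac{2d}{(d+1)(2d-3)}\|\varphi\|_{L^{q}}^{q}\Bigr)=\tfrac{2(d-2)}{2d-3}K(\varphi)-\tfrac{2}{2d-3}\|\varphi\|_{L^{2^*}}^{2^*}
\]
reduces \eqref{uniform:K:positive} to showing: \emph{if $K(\varphi)<\bar\mu\bigl(m-E(\varphi)\bigr)$ then $(d-2)K(\varphi)\ge\|\varphi\|_{L^{2^*}}^{2^*}$}. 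Suppose, for contradiction, that $0\le K(\varphi)<\bar\mu\bigl(m-E(\varphi)\bigr)$ and $(d-2)K(\varphi)<\|\varphi\|_{L^{2^*}}^{2^*}$. Using $E(\varphi)=\tfrac1{\bar\mu}K(\varphi)+H(\varphi)$, the first inequality gives $\bar\mu H(\varphi)\le\bar\mu m-2K(\varphi)<\bar\mu m$, so $H(\varphi)<m$; as $\varphi\ne0$, Lemma~\ref{minimization:Hc} then forces $K^c(\varphi)>0$, i.e.\ $\|\varphi\|_{L^{2^*}}^{2^*}<\|\nabla\varphi\|_{L^2}^2$. Combined with $E^c(\varphi)\le E(\varphi)<m=E^c(W)=\tfrac1d\|\nabla W\|_{L^2}^2$ (Proposition~\ref{threshold-energy}, using $\|\nabla W\|_{L^2}^2=\|W\|_{L^{2^*}}^{2^*}$), this gives $\tfrac1d\|\nabla\varphi\|_{L^2}^2<E^c(\varphi)<\tfrac1d\|\nabla W\|_{L^2}^2$, hence $\theta:=\|\nabla\varphi\|_{L^2}^2/\|\nabla W\|_{L^2}^2\in(0,1)$.

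Next I would insert the sharp Sobolev inequality \cite{Aubin:Sharp contant:Sobolev, Talenti:best constant}, which (again via $\|\nabla W\|_{L^2}^2=\|W\|_{L^{2^*}}^{2^*}$) reads $\|\varphi\|_{L^{2^*}}^{2^*}\le\theta^{d/(d-2)}\|\nabla W\|_{L^2}^2$. Rewriting $(d-2)K(\varphi)<\|\varphi\|_{L^{2^*}}^{2^*}$ as $2\|\nabla\varphi\|_{L^2}^2+\tfrac{2d}{d+1}\|\varphi\|_{L^q}^q<\tfrac{2d-3}{d-2}\|\varphi\|_{L^{2^*}}^{2^*}$ and applying the Sobolev bound (discarding the nonnegative $\|\varphi\|_{L^q}^q$ term) forces $\theta>\beta_d:=\bigl(\tfrac{2(d-2)}{2d-3}\bigr)^{(d-2)/2}$; rewriting $K(\varphi)<\bar\mu\bigl(m-E(\varphi)\bigr)$ as $(2d-1)\|\nabla\varphi\|_{L^2}^2-(2d-3)\|\varphi\|_{L^{2^*}}^{2^*}+\tfrac{2d(d-1)}{d+1}\|\varphi\|_{L^q}^q<2\|\nabla W\|_{L^2}^2$ and bounding its left side below (by the same Sobolev bound on $\|\varphi\|_{L^{2^*}}^{2^*}$ together with nonnegativity of $\|\varphi\|_{L^q}^q$) forces $\phi(\theta)<2$, where $\phi(t):=(2d-1)t-(2d-3)t^{d/(d-2)}$. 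But $\phi$ is strictly concave on $(0,1]$ with $\phi(1)=2$ and $\phi(\beta_d)=3\beta_d$, and $\beta_d\ge\tfrac23$ for every $d\ge5$ (elementary), so, lying above its chords, $\phi>2$ on $(\beta_d,1)$; since $\theta\in(\beta_d,1)$ this contradicts $\phi(\theta)<2$, completing part~(2).

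The technical heart is this last step: obtaining the second alternative of \eqref{uniform:K:positive} with the \emph{precise} constant $\bar\mu$ in front of $m-E(\varphi)$ is exactly what forces the use of the sharp (Aubin--Talenti) Sobolev constant and the verification of the scalar inequality $\phi>2$ on $(\beta_d,1)$. Part~(1) and all the algebraic rearrangements above are routine once the variational lemmas (\ref{minimization:H}, \ref{minimization:Hc}, \ref{structure:J}, and Proposition~\ref{threshold-energy}) are available.
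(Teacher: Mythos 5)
Your argument is correct, but it follows a genuinely different route from the paper's. The paper proves both parts by running the scaling flow $\lambda\mapsto\varphi^\lambda_{d,-2}$: setting $j(\lambda)=E(\varphi^\lambda_{d,-2})$, it uses the identity of Lemma \ref{structure:J} to get the differential inequality $j''\le\bar\mu j'$ (resp.\ $j''\le-\bar\mu j'$ in the subcase $2\bar\mu K\le\tfrac{8d}{(d-1)(d-2)}\|\varphi\|_{L^{2^*}}^{2^*}$), locates a $\lambda_0$ with $K(\varphi^{\lambda_0}_{d,-2})=0$ so that $E(\varphi^{\lambda_0}_{d,-2})\ge m$ by the very definition \eqref{minimization}, and integrates; in particular it never needs the explicit value of $m$ nor the sharp Sobolev constant. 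You instead dispose of part (1) purely algebraically via $H=E-\tfrac1{\bar\mu}K$ and Lemma \ref{minimization:H}, and for part (2) you note (correctly — I checked the coefficient identity) that the second alternative of \eqref{uniform:K:positive} is equivalent to $(d-2)K(\varphi)\ge\|\varphi\|_{L^{2^*}}^{2^*}$, which is exactly the paper's subcase dividing line, and then replace the paper's ODE/integration step by a quantitative contradiction: $H<m$ forces $K^c>0$ via Lemma \ref{minimization:Hc}, whence $\theta=\|\nabla\varphi\|_{L^2}^2/\|\nabla W\|_{L^2}^2\in(0,1)$, and the sharp Aubin--Talenti constant together with $m=\tfrac1d\|\nabla W\|_{L^2}^2$ reduces everything to the scalar inequality $\phi(t)=(2d-1)t-(2d-3)t^{d/(d-2)}>2$ on $(\beta_d,1)$, which your concavity/chord argument settles since $\beta_d\ge\tfrac34$ for $d\ge5$ (e.g.\ by Bernoulli's inequality). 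I verified the key computations ($\phi(1)=2$, $\phi(\beta_d)=3\beta_d$, the Sobolev normalization $\|\varphi\|_{L^{2^*}}^{2^*}\le\theta^{d/(d-2)}\|\nabla W\|_{L^2}^2$), and the only blemish is a harmless strict/non-strict ordering in the chain $\bar\mu H\le\bar\mu m-2K<\bar\mu m$, which should read $\bar\mu H<\bar\mu m-2K\le\bar\mu m$. The trade-off: your proof is more explicit and self-contained at the level of constants but leans on the sharp Sobolev inequality and Proposition \ref{threshold-energy}, whereas the paper's flow argument is softer, using only the variational definition of $m$, and hence transfers more readily to settings where $m$ is not explicitly computable.
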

\begin{proof} By Lemma \ref{structure:J}, for any $\varphi \in H^1$, we have
\begin{align*}
\LLL^2 E(\varphi) = \bar{\mu} \LLL E(\varphi)-\frac{8}{d-1} \big\|\nabla \varphi\big\|^2_{L^2} - \frac{8d}{(d-1)(d-2)}
\big\|\varphi\big\|^{2^*}_{L^{2^*}}.
\end{align*}
Let $j(\lambda)=E(\varphi^{\lambda}_{d,-2})$, then we have
\begin{align}\label{diff J}
j''(\lambda) =  \bar{\mu}j'(\lambda) - \frac{8 e^{4\lambda}}{d-1} \big\|\nabla \varphi\big\|^2_{L^2} -
\frac{8d e^{\frac{4d}{d-2}\lambda}}{(d-1)(d-2)}\big\|\varphi\big\|^{2^*}_{L^{2^*}}.
\end{align}

\noindent{\bf Case I:} If $K(\varphi)<0$, then by
\eqref{asymptotic:KQ}, Lemma \ref{Postivity:K} and the continuity of $K$ in
$\lambda$, there exists a negative number
$\lambda_0<0$ such that $K(\varphi^{\lambda_0}_{d,-2})=  0$, and
\begin{align*}
K(\varphi^{\lambda}_{d,-2})<   0, \;\; \forall\; \; \lambda\in
(\lambda_0, 0).
\end{align*}
By \eqref{minimization}, we obtain $E(\varphi^{\lambda_0}_{d,-2})
\geq m$. Now by integrating \eqref{diff J} over
$[\lambda_0, 0]$, we have
\begin{align*}
\int^0_{\lambda_0} j''(\lambda)\; d\lambda \leq \bar{\mu}
\int^0_{\lambda_0} j'(\lambda)\; d\lambda,
\end{align*}
which implies that
\begin{align*}
K(\varphi)=j'(0)-j'(\lambda_0)\leq
\bar{\mu}\left(j(0)-j(\lambda_0)\right)\leq -\bar{\mu}
(m-E(\varphi)),
\end{align*}
which implies \eqref{uniform:K:negative}.

\noindent{\bf Case II:} $K(\varphi) \geq 0$. We divide it
into two subcases:

When $\displaystyle 2\bar{\mu} K(\varphi)\geq  \frac{8d}{(d-1)(d-2)}
\big\|\varphi\big\|^{2^*}_{L^{2^*}}.$
Since
\begin{align*}
 \frac{8d}{(d-1)(d-2)}
\big\|\varphi\big\|^{2^*}_{L^{2^*}} = & - \frac{4d}{(d-1)(d-2)}K(\varphi) \\
& + \int_{\R^d}
\left( \frac{8d}{(d-1)(d-2)} \big| \nabla \varphi \big|^2 + \frac{8d^2}{(d+1)(d-1)(d-2)} \big|  \varphi \big|^{\frac{2d+2}{d-1}}
\right) \; dx,
\end{align*}
then we have
\begin{align*}
2\bar{\mu} K(\varphi)\geq & - \frac{4d}{(d-1)(d-2)}K(\varphi) \\
& + \int_{\R^d}
\left( \frac{8d}{(d-1)(d-2)} \big| \nabla \varphi \big|^2 + \frac{8d^2}{(d+1)(d-1)(d-2)} \big|  \varphi \big|^{\frac{2d+2}{d-1}}
\right) \; dx,
\end{align*}
which implies that
\begin{align*}
 K(\varphi)\geq \frac{2}{2d-3} \big\|\nabla
\varphi \big\|^2_{L^2} +  \frac{2d}{(d+1)(2d-3)} \big\|\varphi\big\|^{\frac{2d+2}{d-1}}_{L^{
\frac{2d+2}{d-1}}}.
\end{align*}

When $\displaystyle 2\bar{\mu}  K(\varphi) \leq \frac{8d}{(d-1)(d-2)}
\big\|\varphi\big\|^{2^*}_{L^{2^*}}$.
By \eqref{diff J}, we have for $\lambda=0$
\begin{align}
0< &  2 \bar{\mu}j'(\lambda) <
\frac{8d e^{\frac{4d}{d-2}\lambda}}{(d-1)(d-2)}
\big\|\varphi\big\|^{2^*}_{L^{2^*}}, \nonumber\\
j''(\lambda) = & \bar{\mu}j'(\lambda) - \frac{8 e^{4\lambda}}{d-1} \big\|\nabla \varphi\big\|^2_{L^2} -
\frac{8d e^{\frac{4d}{d-2}\lambda}}{(d-1)(d-2)}\big\|\varphi\big\|^{2^*}_{L^{2^*}} \leq - \bar{\mu}j'(\lambda) . \label{evolution j}
\end{align}
By the continuity of $j'$ and $j''$ in $\lambda$, we know that $j'$ is an accelerated decreasing function as $\lambda$ increases until $j'(\lambda_0)=0$ for some
finite number $\lambda_0>0$ and \eqref{evolution j} holds on $[0,
\lambda_0]$.

By
$
K(\varphi^{\lambda_0}_{d,-2})=j'(\lambda_0)=0,
$
we know that
\begin{align*}
E(\varphi^{\lambda_0}_{d,-2})\geq m.
\end{align*}
Now integrating \eqref{evolution j} over $[0, \lambda_0]$, we obtain that
\begin{align*}
-K(\varphi)=j'(\lambda_0)-j'(0) \leq -\bar{\mu} \big(j(\lambda_0)-j(0)\big)
\leq -\bar{\mu} (m-E(\varphi)).
\end{align*}
This completes the proof.
\end{proof}

%
%
%
%

\section{Part I: Blow up for $\KKK^-$}\label{S:blow up} In this section, we prove the
blow-up result of Theorem \ref{theorem} in the case that $u_0$ is radial. The case $xu_0\in L^2$ is trivial.
 We can also refer to \cite{MiaoWZ:NLS:3d Combined} for the similar discussions to the case
 $d=3$. The spatial localization argument comes from
 \cite{OgaTsu:Blowup:NLS:91}. Now let $\phi$ be a smooth, radial function satisfying $
\partial^2_r \phi(r) \leq 2$, $\phi(r)=r^2$ for $r\leq 1$, and
$\phi(r)$ is constant for $r\geq 3$. For some $R$, we define
\begin{align*}
V_R(t):=\int_{\R^d} \phi_R(x) |u(t,x)|^2\; dx, \quad
\phi_R(x)=R^2\phi\left(\frac{|x| }{R }\right).
\end{align*}

By Lemma \ref{L:virial},
$
\Delta\phi_R(r)= 2d $  for $ r\leq R,$ and $\Delta^2 \phi_R(r)=0 $ for $  r\leq R,
$
we have
\begin{align*}
\partial^2_t V_R(t)
= &  \;  4 \int_{\R^d}\partial_{ij}(\phi_R)  u_i(t,x)\bar{u}_j(t,x)\;  dx
- \int_{\R^d} \Delta^2 \phi_R \big| u(t,x)\big|^2\; dx\\
& - \frac{4}{d}\int_{\R^d} \Delta \phi_R \big| u(t,x)\big|^{2^*}\; dx+\frac{4}{d+1}\int_{\R^3} \Delta \phi_R \big| u(t,x)\big|^{\frac{2d+2}{d-1}}\; dx\\
\leq &\; 4 \int_{\R^d} \left( 2 |\nabla u (t)|^2 -2|u (t)|^{2^*} +\frac{2d}{d+1}
|u (t)|^{\frac{2d+2}{d-1}} \right)\; dx \\
 & +  \frac{c}{R^2}\int_{R\leq |x|\leq 3R} \big| u (t) \big|^2 \; dx + c \int_{R\leq |x|\leq 3R} \left( \big| u (t) \big|^{\frac{2d+2}{d-1}}
 + \big| u(t) \big|^{2^*} \right) \; dx.
\end{align*}

By the radial Sobolev inequality, we have
\begin{align*}
\big\|f\big\|_{L^\infty(|x|\geq R)} \leq & \frac{c}{R^{(d-1)/2}} \big\|f\big\|^{1/2}_{L^2(|x|\geq R)}\big\|\nabla f\big\|^{1/2}_{L^2(|x|\geq R)}.
\end{align*}
Therefore, by the mass conservation and Young's inequality, we know that for any $\epsilon>0$ there exist sufficiently large $R$ such that
\begin{align}
 &  \partial^2_t   V_R(t) \nonumber\\
 \leq & 4 K(u(t))
+ \epsilon \big\|\nabla u(t)\big\|^2_{L^2 } + \epsilon^2. \nonumber\\
= & \frac{16d}{d-2} E(u) - \left(\frac{16}{d-2}-\epsilon\right)\big\|\nabla u(t)\big\|^2_{L^2} - \frac{8d}{(d+1)(d-2)}\big\|u(t)\big\|^{\frac{2d+2}{d-1}}_{L^{\frac{2d+2}{d-1}}} + \epsilon^2. \label{videntity:second der}
\end{align}
By $K(u)<0$, the mass and energy conservations, Lemma \ref{uniform bound} and
the continuity argument, we know that for any $t\in I$, we have
\begin{align*}
K(u(t)) \leq -\bar{\mu}\left(m-E(u(t))\right)<0.
\end{align*}
By Lemma \ref{minimization:H}, we have
\begin{align*}
m \leq H(u(t))< \frac1d \big\|u(t)\big\|^{2^*}_{L^{2^*}}.
\end{align*}
where we have used the fact that $K(u(t))<0$ in the second inequality. By the fact $m=\frac1d \left(C^*_d\right)^{-d}$ and the sharp Sobolev inequality, we have
\begin{align*}
\big\|\nabla u(t)\big\|^{2^*}_{L^2} \geq \left(C^*_d\right)^{-2^*} \big\|u(t)\big\|^{2^*}_{L^{2^*}} > \left(dm\right)^{\frac{d}{d-2}},
\end{align*}
which implies that $\big\|\nabla u(t)\big\|^2_{L^2} > dm$.

In addition, by $E(u_0)<m$ and energy conservation, there exists $\delta_1>0$ such that
$E(u(t))\leq (1-\delta_1)m$. Thus, if we choose $\epsilon$ sufficiently small, we have
\begin{align*}
\partial^2_t V_R(t) \leq \frac{16d}{d-2} (1-\delta_1)m - d\big(\frac{16}{d-2}-\epsilon\big) m +   \epsilon^2 \leq -  \frac{8d}{d-2} \delta_1 m,
\end{align*}
which implies that $u$ must blow up at finite time. \qed

%
%
%
%

\section{Profile decomposition}\label{S:profiledec} In this part, we will use the method in \cite{BahG:NLW:proffile decomp, IbrMN:f:NLKG, Ker:NLS:profile decomp, MiaoWZ:NLS:3d Combined}
to show the linear and nonlinear profile
decompositions of the $H^1$-bounded sequences of solutions of \eqref{NLS} in five and higher dimensions, which will be used to construct the critical
element (minimal  energy non-scattering solution) and show its properties, especially the compactness and regularity.
In order to do it, we cite the similar notation to those in \cite{IbrMN:f:NLKG, MiaoWZ:NLS:3d Combined}. Now we introduce the complex-valued function
$\overrightarrow{v}(t,x)$ by
\begin{align*}
\overrightarrow{v}(t,x)=\left<\nabla\right>v (t,x), \quad v (t,x)
=\left<\nabla\right>^{-1}\overrightarrow{v} (t,x).
\end{align*}

Given $(t^j_n, x^j_n, h^j_n)\in \R \times \R^d \times (0, 1]$, let
$\tau^j_n$, $T^j_n$ denote the scaled time drift, the unitary
operator in $L^2(\R^d)$, defined by
\begin{align*}
\tau^j_n = - \frac{t^j_n}{\left(h^j_n\right)^2}, \quad T^j_n \varphi(x) =
\frac{1}{(h^j_n)^{d/2}} \varphi \left(\frac{x-x^j_n}{h^j_n}\right).
\end{align*}

\subsection{Linear profile decomposition} By the similar arguments to that in \cite{MiaoWZ:NLS:3d Combined}, we can establish that

\begin{proposition}[\cite{MiaoWZ:NLS:3d Combined}, Proposition 5.1 and Lemma 5.3]\label{L:linear profile}
Let
$$\overrightarrow{v}_n(t,x)=e^{it\Delta}\overrightarrow{v}_n(0)$$
 be a sequence of the free
Schr\"{o}dinger solutions with bounded $L^2$ norm. Then up to a
subsequence, there exist $K\in \{0, 1,2,\ldots, \infty\}$,
$\{\varphi^j\}_{j\in [0, K)}\subset L^2(\R^d)$ and $\{t^j_n, x^j_n,
h^j_n\}_{n\in \N} \subset \R \times \R^d \times (0, 1]$ satisfying
\begin{align}\label{profile:linear}
\overrightarrow{v}_n(t,x) = \sum^{k-1}_{j=0}
\overrightarrow{v}^j_n(t,x) + \overrightarrow{w}^k_n(t,x),
\end{align}
where $ \overrightarrow{v}^j_n (t,x) = e^{i(t-t^j_n)\Delta} T^j_n
\varphi^j$, and
\begin{align}\label{small:w weak topology}
\lim_{k\rightarrow K} \varlimsup_{n\rightarrow +\infty}
\big\|\overrightarrow{w}^k_n\big\|_{L^{\infty}_t(\R;
B^{-d/2}_{\infty, \infty}(\R^d))} =0,
\end{align}
and for any
$l<j<k\leq K$,
\begin{align}
\lim_{n\rightarrow +\infty} \left(\log \left| \dfrac{h^j_n}{h^l_n} \right|
+ \left| \frac{t^j_n - t^l_n}{(h^l_n)^2} \right| +   \left|
\frac{x^j_n
- x^l_n}{ h^l_n } \right|\right)=\infty,\label{orth:I}\\
\lim_{k\rightarrow K}  \varlimsup_{n\rightarrow +\infty} \left|
M(v_n(0)) - \sum^{k-1}_{j=0} M(v^j_n(0)) -M(w^k_n(0))\right| =0, \label{orth:mass}\\
\lim_{k\rightarrow K}  \varlimsup_{n\rightarrow +\infty} \left|
E(v_n(0)) - \sum^{k-1}_{j=0}E(v^j_n(0)) - E (w^k_n(0))\right| =0, \label{orth:energy} \\
\lim_{k\rightarrow K} \varlimsup_{n\rightarrow +\infty} \left|
K(v_n(0)) - \sum^{k-1}_{j=0} K(v^j_n(0)) -K(w^k_n(0))\right| =0. \label{orth:scalingderivative}
\end{align}
Moreover, each sequence $\{h^j_n\}_{n\in \N}$ is either going to $0$
or identically $1$ for all $n$.
\end{proposition}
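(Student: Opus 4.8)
The plan is to follow the now-standard scheme for linear profile decompositions adapted to the $H^1$ setting (equivalently, the $L^2$ setting for $\overrightarrow{v}_n$), in the spirit of Bahouri--G\'erard, Keraani, and the version in \cite{MiaoWZ:NLS:3d Combined}. First I would set $f_n := \overrightarrow{v}_n(0) \in L^2(\R^d)$, which is bounded in $L^2$ by hypothesis, and extract profiles iteratively. The key extraction lemma says: if a sequence $g_n$ is bounded in $L^2$ with $\liminf \|e^{it\Delta}g_n\|_{L^\infty_t B^{-d/2}_{\infty,\infty}} \geq \nu > 0$, then (after passing to a subsequence) there exist parameters $(t_n, x_n, h_n) \in \R\times\R^d\times(0,1]$ and a nonzero $\varphi \in L^2$ such that $(T_n)^{-1} e^{it_n\Delta} g_n \rightharpoonup \varphi$ weakly in $L^2$, with $\|\varphi\|_{L^2} \gtrsim \nu^{\theta}\|g_n\|_{L^2}^{1-\theta}$ for an appropriate $\theta$. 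This is the refined Sobolev / profile-picking step: one uses that the $B^{-d/2}_{\infty,\infty}$ norm of $e^{it\Delta}g_n$ controls, via interpolation against the $L^2$ bound, a frequency-localized, space-localized $L^2$ mass of size $\gtrsim\nu$, which one captures in the weak limit after rescaling by $h_n$ (the dyadic frequency scale), translating by $x_n$, and shifting time by $t_n$. The dichotomy ``$h^j_n\to 0$ or $h^j_n\equiv 1$'' comes from restricting the frequency scales to $(0,1]$ (since $\overrightarrow v_n$ already carries a $\langle\nabla\rangle$, low frequencies are harmless and only the scales $\lesssim 1$ matter), and from the fact that any convergent subsequence of scales in a compact set can be normalized to $1$.

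Next I would run the standard induction on $k$: set $\overrightarrow{w}^0_n := \overrightarrow{v}_n$; given $\overrightarrow{w}^{k-1}_n$, if $\limsup_n \|\overrightarrow{w}^{k-1}_n\|_{L^\infty_t B^{-d/2}_{\infty,\infty}}$ is already $0$ we stop with $K = k-1$; otherwise apply the extraction lemma to $\overrightarrow{w}^{k-1}_n(0)$ to produce the $k$-th profile $(\varphi^{k-1}, t^{k-1}_n, x^{k-1}_n, h^{k-1}_n)$, set $\overrightarrow{v}^{k-1}_n := e^{i(t-t^{k-1}_n)\Delta}T^{k-1}_n\varphi^{k-1}$ and $\overrightarrow{w}^k_n := \overrightarrow{w}^{k-1}_n - \overrightarrow{v}^{k-1}_n$. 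The asymptotic orthogonality of parameters \eqref{orth:I} is forced at each stage: the weak limit defining $\varphi^{k-1}$ is taken \emph{after} removing the previous profiles, so if $(t^j_n,x^j_n,h^j_n)$ and $(t^l_n,x^l_n,h^l_n)$ stayed within bounded ``pseudodistance'' along a subsequence, the $l$-th profile's contribution would not vanish in the weak limit picking out $\varphi^{j}$, contradicting the construction (one checks that the relevant group action $(t,x,h)\mapsto T^{\cdot}e^{it\Delta}$ has the property that two orbits either asymptotically coincide or asymptotically decouple weakly in $L^2$). Orthogonality also yields the Pythagorean expansion of the $L^2$-norm of $\overrightarrow v_n(0)$, which in the $v_n$-variables is exactly the mass orthogonality \eqref{orth:mass}; combined with the almost-everywhere (after subsequence, by Rellich on compact sets and the orthogonality of supports/frequencies) convergence of the rescaled profiles, it gives \eqref{orth:energy} and \eqref{orth:scalingderivative} for the nonlinear functionals $E$ and $K$ via the Brezis--Lieb lemma applied to each of the terms $\|\nabla\cdot\|_{L^2}^2$, $\|\cdot\|_{L^{2^*}}^{2^*}$, $\|\cdot\|_{L^{(2d+2)/(d-1)}}^{(2d+2)/(d-1)}$, plus \eqref{orth:mass} itself for the mass piece of $E$. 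Finiteness of the total extracted ``mass'' $\sum_j\|\varphi^j\|_{L^2}^2 \leq \limsup_n\|\overrightarrow v_n(0)\|_{L^2}^2$ guarantees $\|\varphi^{k-1}\|_{L^2}\to 0$ as $k\to K$, and feeding this back into the extraction lemma's quantitative bound forces \eqref{small:w weak topology}.

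The main obstacle — and the place where the argument genuinely differs from the classical $\dot H^1$ energy-critical case — is the simultaneous handling of two different scaling-critical exponents ($\dot H^1$-critical $|u|^{4/(d-2)}u$ and $\dot H^{1/2}$-critical-for-$v=\langle\nabla\rangle^{-1}\overrightarrow v$ behavior of $|u|^{4/(d-1)}u$) together with the $L^2$-mass term, all within a single profile decomposition at the level of $\overrightarrow v_n = \langle\nabla\rangle v_n$. Concretely, one must check that the \emph{same} sequence of parameters $(t^j_n,x^j_n,h^j_n)$ (with scales restricted to $(0,1]$) simultaneously diagonalizes all three functionals in \eqref{orth:mass}--\eqref{orth:scalingderivative}; the restriction $h^j_n\in(0,1]$ is precisely what makes this consistent, since it ensures the $\langle\nabla\rangle^{-1}$ is essentially a bounded, non-scaling operator on each profile and the subcritical term does not generate its own independent scales. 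I would handle this by decomposing $v^j_n = \langle\nabla\rangle^{-1}\overrightarrow v^j_n$ and noting that on the profile at scale $h^j_n\le 1$ one has $\langle\nabla\rangle^{-1}T^j_n \approx h^j_n T^j_n\langle h^j_n\nabla\rangle^{-1}$, controlled uniformly; the decoupling of the $L^{2^*}$ and $L^{(2d+2)/(d-1)}$ norms then follows from the same orthogonality of parameters via the elementary inequality $\big|\,\|\sum a_j\|_{L^p}^p - \sum\|a_j\|_{L^p}^p\,\big| \lesssim \sum_{j\neq l}\int |a_j|^{p-1}|a_l|$ together with $\int|a^j_n|^{p-1}|a^l_n|\to 0$ when the parameters are orthogonal. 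Since all of these pieces already appear (for $d=3$) in \cite{MiaoWZ:NLS:3d Combined}, Propositions 5.1 and Lemma 5.3, the proof here is ``the analogue'' — I would state that the proof is identical to the cited one \emph{mutatis mutandis} after replacing the $3$-dimensional exponents by the general-$d$ exponents introduced above, and only spell out the one dimension-sensitive point, namely that the refined Strichartz/Sobolev inequality underlying the extraction lemma (controlling a Strichartz-type norm of $e^{it\Delta}$ by the $L^2$ norm and the $B^{-d/2}_{\infty,\infty}$ norm) holds in all $d\geq 5$, which is immediate from Bernstein and the dispersive estimate.
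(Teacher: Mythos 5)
Your proposal follows essentially the same route as the paper, which gives no independent proof here but cites Proposition 5.1 and Lemma 5.3 of the 3d companion paper, whose argument is exactly the Bahouri--G\'erard/Keraani/Ibrahim--Masmoudi--Nakanishi scheme you outline: iterative extraction of concentrating waves via the $B^{-d/2}_{\infty,\infty}$ inverse inequality, weak limits after undoing the group action, parameter orthogonality, Pythagorean $L^2$ decoupling of $\overrightarrow{v}_n(0)$, and decoupling of the potential terms using the smallness of the remainder together with cross-term estimates, with the scale restriction $h^j_n\in(0,1]$ giving the dichotomy $h^j_n\equiv 1$ or $h^j_n\to 0$. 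The only inaccuracy is the heuristic conjugation formula for $\left<\nabla\right>^{-1}$ with $T^j_n$: the exact identity is $\left<\nabla\right>^{-1}T^j_n=T^j_n\,h^j_n\big((h^j_n)^2-\Delta\big)^{-1/2}$, whose limit as $h^j_n\to 0$ produces the $|\nabla|^{-1}$ normalization used in the paper, and this does not affect the validity of your argument.
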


We call $\overrightarrow{v}^j_n$ and $\overrightarrow{w}^k_n$ the free concentrating wave
and the remainder, respectively. According to the above result and Lemma \ref{L:close of K}, we conclude
\begin{proposition}[\cite{MiaoWZ:NLS:3d Combined}, Proposition 5.5]\label{decomp:stable:K}
Let $\overrightarrow{v}_n(t,x) $ be a sequence of the free Schr\"{o}dinger solutions satisfying
$
v_n(0)\in \KKK^+ \;\; \text{and}\;\;  E (v_n(0))<m.
$
Let
\begin{align*}
\overrightarrow{v}_n(t,x) = \sum^{k-1}_{j=0}
\overrightarrow{v}^j_n(t,x) + \overrightarrow{w}^k_n(t,x),
\end{align*}
be the linear profile decomposition given by Proposition
\ref{L:linear profile}. Then for large $n$ and all $j<K$, we have
\begin{align*}
v^j_n(0)\in \KKK^+,\;\;  \;\; w^K_n(0) \in \KKK^+,
\end{align*}
such that \eqref{orth:mass}-\eqref{orth:scalingderivative}. Moreover for all $j<K$, we have
\begin{align*}
0 \leq  \varliminf_{ n\rightarrow +\infty} E(v^j_n(0)) \leq
\varlimsup_{n\rightarrow +\infty} E(v^j_n(0)) \leq
\varlimsup_{n\rightarrow +\infty} E(v_n(0)),
\end{align*}
where the last inequality becomes equality only if $K=1$ and $w^1_n
\rightarrow 0$ in $L^{\infty}_t \dot H^1_x$.
\end{proposition}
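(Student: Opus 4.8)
The plan is to feed the linear profile decomposition of Proposition~\ref{L:linear profile} into the ``closeness'' Lemma~\ref{L:close of K} in order to place every profile $v^j_n(0)$ and every remainder $w^k_n(0)$ into the cone $\KKK^+$, and then to read off the quantitative energy bounds from the asymptotic additivity of $E$, $M$ and $K$ along the decomposition. First I would pass to a subsequence along which $E(v_n(0))\to\ell$ for some $\ell\le m$ (in the situation of interest $\ell<m$; note $E(v_n(0))\ge 0$ by Lemma~\ref{free-energ-equiva} since $v_n(0)\in\KKK^+$), and fix $\delta=\tfrac12(m-\ell)>0$ together with some $\varepsilon$ with $(3d-1)\varepsilon<2d\delta$. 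Applying Proposition~\ref{L:linear profile} produces $\{\varphi^j\}$, the parameters $\{t^j_n,x^j_n,h^j_n\}$, the decomposition \eqref{profile:linear}, and the orthogonality relations \eqref{orth:I}--\eqref{orth:scalingderivative}; the last three of these are exactly the relations \eqref{orth:mass}--\eqref{orth:scalingderivative} asserted in the statement, so no extra work is needed for that part.

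The core step is to promote each term to $\KKK^+$. For a fixed level $k\le K$ write $v_n(0)=\sum_{j<k}v^j_n(0)+w^k_n(0)$ and apply Lemma~\ref{L:close of K} with $\varphi_0=v^0_n(0),\dots,\varphi_{k-1}=v^{k-1}_n(0)$ and $\varphi_k=w^k_n(0)$. The decoupling of $E$ and $K$ along profiles whose parameters separate as in \eqref{orth:I} --- which is the content underlying \eqref{orth:energy} and \eqref{orth:scalingderivative} --- gives, for $k$ close to $K$ and $n$ large,
\begin{align*}
\left|E(v_n(0))-\sum_{j<k}E(v^j_n(0))-E(w^k_n(0))\right|<\varepsilon,\qquad
\left|K(v_n(0))-\sum_{j<k}K(v^j_n(0))-K(w^k_n(0))\right|<\varepsilon;
\end{align*}
together with the gap $E(v_n(0))<m-\delta$ and with $K(v_n(0))\ge 0\ge-\varepsilon$ (because $v_n(0)\in\KKK^+$) this verifies every hypothesis of Lemma~\ref{L:close of K}, so $v^j_n(0)\in\KKK^+$ for $j<k$ and $w^k_n(0)\in\KKK^+$. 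Letting $k$ increase to $K$ yields $v^j_n(0)\in\KKK^+$ for all $j<K$ and $w^K_n(0)\in\KKK^+$.

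With membership in $\KKK^+$ established, Lemma~\ref{free-energ-equiva} gives
$E(v^j_n(0))\ge\tfrac1{2d}\|\nabla v^j_n(0)\|_{L^2}^2+\tfrac1{2d}\|v^j_n(0)\|_{L^{2^*}}^{2^*}\ge 0$
and similarly $E(w^k_n(0))\ge 0$, hence $\varliminf_n E(v^j_n(0))\ge 0$; and dropping the remaining nonnegative terms in the decoupling identity above gives, for $k>j$ and $n$ large, $E(v^j_n(0))\le\sum_{i<k}E(v^i_n(0))+E(w^k_n(0))\le E(v_n(0))+\varepsilon$, so $\varlimsup_n E(v^j_n(0))\le\varlimsup_n E(v_n(0))$ after sending $n\to\infty$ and $\varepsilon\to 0$. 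For the equality case, pass to a further subsequence so that $E(v^j_n(0))\to\ell_j$ and $E(w^k_n(0))\to\mu_k$ for all $j,k$; the decoupling identity forces $\sum_{j<K}\ell_j+\lim_{k\to K}\mu_k=\ell$ with all terms $\ge 0$, so if $\ell_{j_0}=\ell=\varlimsup_n E(v_n(0))$ for some $j_0$ then $\ell_j=0$ for $j\neq j_0$ and $\mu_k\to 0$. But for a nontrivial profile one computes, using the explicit shape of $T^j_n$, $e^{-it^j_n\Delta}$ and the dichotomy $h^j_n\to 0$ or $h^j_n\equiv 1$,
\[
\big\|\nabla v^j_n(0)\big\|_{L^2}^2=\int_{\R^d}\frac{|\eta|^2}{(h^j_n)^2+|\eta|^2}\,\big|\widehat{\varphi^j}(\eta)\big|^2\,d\eta,
\]
which stays bounded below by a positive constant whenever $\varphi^j\neq 0$; combined with $E(v^j_n(0))\ge\tfrac1{2d}\|\nabla v^j_n(0)\|_{L^2}^2\to 0$ this forces $\varphi^j=0$, which is impossible, so there is no index $j\neq j_0$, i.e.\ $K=1$. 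Then $\mu_1=\ell-\ell_0=0$, so $E(w^1_n(0))\to 0$, and since $w^1_n(0)\in\KKK^+$ we get $\|\nabla w^1_n(0)\|_{L^2}^2\le 2d\,E(w^1_n(0))\to 0$; finally $w^1_n$ solves the free Schr\"odinger equation, so $\|w^1_n\|_{L^{\infty}_t\dot H^1_x}=\|\nabla w^1_n(0)\|_{L^2}\to 0$.

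The step I expect to be the genuine obstacle is the second one: the precise bookkeeping needed to invoke Lemma~\ref{L:close of K} uniformly --- controlling the order of the limits in $k$ and $n$, securing the uniform gap $E(v_n(0))<m-\delta$, and iterating over $k$ so that all profiles and all remainders simultaneously land in $\KKK^+$ --- rather than any individual inequality, each of which is either a citation of an earlier result or a short Fourier-side computation.
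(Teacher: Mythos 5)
Your proposal follows essentially the same route as the paper, which deduces the proposition directly from the orthogonality relations of Proposition \ref{L:linear profile} together with Lemma \ref{L:close of K}, using Lemma \ref{free-energ-equiva} for the nonnegativity of the energies and the equality case. Your bookkeeping (choosing $\delta$ from the uniform energy gap below $m$, applying Lemma \ref{L:close of K} for $k$ near $K$ and $n$ large, and the Fourier-side lower bound ruling out extra nontrivial profiles) is correct and matches the intended argument.
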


\subsection{Nonlinear profile decomposition} After the linear profile decomposition of a sequence of initial data in
the last subsection, we now give the nonlinear profile decomposition of a sequence
of solutions of \eqref{NLS} with the same initial data in the energy
space $H^1(\R^d)$. The procedure is the same as the 3d case in \cite{MiaoWZ:NLS:3d Combined}.

Let $v_n(t,x)$ be a sequence of solutions for the free
Schr\"{o}dinger equation with initial data in $\KKK^+$, that is,
$v_n \in H^1(\R^d)$ and
\begin{align*}
\left(i\partial_t + \Delta\right) v_n =0,\quad v_n(0)\in \KKK^+.
\end{align*}

Let
\begin{align*}
\overrightarrow{v}_n(t,x) = \left<\nabla\right> v_n(t,x),
\end{align*}
then by Proposition \ref{L:linear profile}, we have a sequence of the
free concentrating wave $\overrightarrow{v}^j_n(t,x)$ with
$\overrightarrow{v}^j_n (t^j_n) = T^j_n \varphi^j$, $v^j_n(0)\in
\KKK^+$ for $j=0, \ldots, K$, such that
\begin{align*}
\overrightarrow{v}_n(t,x) = & \sum^{k-1}_{j=0}
\overrightarrow{v}^j_n(t,x) + \overrightarrow{w}^k_n(t,x)  =
\sum^{k-1}_{j=0} e^{i(t-t^j_n)\Delta}T^j_n \varphi^j
  + \overrightarrow{w}^k_n \\
  = &  \sum^{k-1}_{j=0} T^j_n e^{i\left(\frac{t-t^j_n}{(h^j_n)^2}\right)\Delta} \varphi^j
  + \overrightarrow{w}^k_n .
\end{align*}

Now for any concentrating wave $\overrightarrow{v}^j_n $, $j=0,
\ldots, K$, we undo the group action, i.e., the scaling and translation transformation $T^j_n$,  to look for the
linear profile $V^j$. Let
\begin{align*}
\overrightarrow{v}^j_n(t,x) = &  T^j_n
\overrightarrow{V}^j\left(\frac{t-t^j_n}{(h^j_n)^2}\right),
\end{align*}
then we have
\begin{align*}
 \left(i\partial_t + \Delta\right) \overrightarrow{V}^j =0, \quad
  \overrightarrow{V}^j(0)=\varphi^j.
\end{align*}

Now let $u^j_n(t,x)$ be the nonlinear solution of \eqref{NLS} with
initial data $v^j_n(0)$, that is
\begin{align*}
 \left(i\partial_t + \Delta\right) \overrightarrow{u}^j_n(t,x) = & \left< \nabla\right> f_1 (\left<\nabla\right> ^{-1} \overrightarrow{u}^j_n )+ \left< \nabla\right> f_2 (\left<\nabla\right> ^{-1} \overrightarrow{u}^j_n ),\\
  \quad \overrightarrow{u}^j_n (0)=& \overrightarrow{v}^j_n (0)= T^j_n\overrightarrow{V}^j(\tau^j_n),\quad  u^j_n (0)\in
  \KKK^+,
\end{align*}
where $\tau^j_n = - t^j_n/ (h^j_n)^2$. In order to look for the nonlinear
profile $\overrightarrow{U}^j_{\infty}$ associated to the free concentrating wave
$\left(\overrightarrow{v}^j_n;\; h^j_n, t^j_n, x^j_n \right)$, we
also need undo the group action. We denote
\begin{align*}
   \overrightarrow{u}^j_n(t,x) =& T^j_n
\overrightarrow{U}^j_n\left(\frac{t-t^j_n}{(h^j_n)^2}\right),
\end{align*}
then we have
\begin{align*}
 \left(i\partial_t + \Delta\right) \overrightarrow{U}^j_n = &
\left( \left<\nabla\right>^{j}_{n}\right) f_1 \left( \left(
\left<\nabla\right>^{j}_{n}\right) ^{-1} \overrightarrow{U}^j_n
\right) + \left(h^j_n\right)^{\frac2{d-1}} \cdot \left( \left<\nabla\right>^{j}_{n}\right) f_2
\left( \left( \left<\nabla\right>^{j}_{n}\right) ^{-1}
\overrightarrow{U}^j_n \right),\\
\overrightarrow{U}^j_n(\tau^j_n) =& \overrightarrow{V}^j(\tau^j_n).
\end{align*}

Up to a subsequence, we may assume that there exist
$h^j_{\infty}\in \{0, 1\}$ and $\tau^j_{\infty} \in [-\infty, \infty]$ for
every $j=\{0, \ldots, K\}$, such that
\begin{align*}
h^j_n \rightarrow   \; h^j_{\infty} ,\;\; \text{and}\;\;  \tau^j_n
\rightarrow \; \tau^j_{\infty}  .
\end{align*}
As $n\rightarrow +\infty$, the limit equation of $\overrightarrow{U}^j_n$
is given by
\begin{align*}
 \left(i\partial_t + \Delta\right) \overrightarrow{U}^j_{\infty} = &
\left( \left<\nabla\right>^{j}_{\infty}\right) f_1 \left( \left(
\left<\nabla\right>^{j}_{\infty}\right) ^{-1}
\overrightarrow{U}^j_{\infty} \right) + \left(h^j_{\infty}\right)^{\frac{2}{d-1}} \cdot \left(
\left<\nabla\right>^{j}_{\infty}\right) f_2 \left( \left(
\left<\nabla\right>^{j}_{\infty}\right) ^{-1}
\overrightarrow{U}^j_{\infty} \right),\\
\overrightarrow{U}^j_\infty(\tau^j_\infty) =&
\overrightarrow{V}^j(\tau^j_\infty) \in L^2(\R^d).
\end{align*}

Let
\begin{align*}
\widehat{U}^j_{\infty}:=
\left(\left<\nabla\right>^j_{\infty}\right)^{-1}\overrightarrow{U}^j_{\infty},
\end{align*}
then
\begin{align*}
 \left(i\partial_t + \Delta\right) \widehat{U}^j_{\infty} = &
  f_1 \left(\widehat{U}^j_{\infty} \right) + \left( h^j_{\infty}\right)^{\frac{2}{d-1}} \cdot
  f_2 \left(\widehat{U}^j_{\infty} \right),\\
 \widehat{U}^j_{\infty}(\tau^j_\infty) =& \left(\left<\nabla\right>^j_{\infty}\right)^{-1}
\overrightarrow{V}^j(\tau^j_\infty).
\end{align*}

The unique existence of a local solution
$\overrightarrow{U}^j_{\infty}$ around $\tau^j_{\infty}$ is known in
all cases, including $h^j_{\infty}=0$ and $\tau^j_{\infty}=\pm
\infty$. $\overrightarrow{U}^j_{\infty} $ on the maximal existence
interval is called the nonlinear profile associated with the free
concentrating wave $\left(\overrightarrow{v}^j_n;\; h^j_n, t^j_n,
x^j_n \right)$.

The nonlinear concentrating wave $u^j_{(n)}$ associated with
$\left(\overrightarrow{v}^j_n;\; h^j_n, t^j_n, x^j_n \right)$ is
defined by
\begin{align*}
\overrightarrow{u}^j_{(n)}(t,x)=T^j_n
\overrightarrow{U}^j_{\infty}\left(\frac{t-t^j_n}{(h^j_n)^2}\right),
\end{align*}
then we have
\begin{align*}
 \left(i\partial_t + \Delta\right) \overrightarrow{u}^j_{(n)}
=& \left<\nabla\right>^j_{\infty} f_1 \left( \left(
\left<\nabla\right>^j_{\infty}\right)^{-1}
\overrightarrow{u}^j_{(n)} \right) +  \left(h^j_{\infty}\right)^{\frac{2}{d-1}} \cdot
\left<\nabla\right>^j_{\infty} f_2 \left( \left(
\left<\nabla\right>^j_{\infty} \right)^{-1}
\overrightarrow{u}^j_{(n)} \right), \\
 \overrightarrow{u}^j_{(n)}(0) =&
T^j_n\overrightarrow{U}^j_{\infty}(\tau^j_n),
\end{align*}
which implies that
\begin{align*}
\big\|\overrightarrow{u}^j_{(n)}(0) - \overrightarrow{u}^j_{n}(0)
\big\|_{L^2} = & \big\|T^j_n\overrightarrow{U}^j_{\infty}(\tau^j_n)
- T^j_n\overrightarrow{V}^j(\tau^j_n) \big\|_{L^2}
=\big\|
\overrightarrow{U}^j_{\infty}(\tau^j_n) -
 \overrightarrow{V}^j(\tau^j_n) \big\|_{L^2}
\\
\leq & \big\| \overrightarrow{U}^j_{\infty}(\tau^j_n) -
\overrightarrow{U}^j_{\infty}(\tau^j_{\infty}) \big\|_{L^2} + \big\|
 \overrightarrow{V}^j(\tau^j_n)- \overrightarrow{V}^j(\tau^j_{\infty})
 \big\|_{L^2}   \rightarrow  0.
\end{align*}

We denote
\begin{align*}
\overrightarrow{u}^j_{(n)} = \left<\nabla \right>u^j_{(n)}.
\end{align*}
If $h^j_{\infty}=1$, we have $h^j_n\equiv1$, then $ u^j_{(n)} \in
H^1(\R^d)$ and satisfies
\begin{align*}
 \left(i\partial_t + \Delta\right) u^j_{(n)} = f_1(u^j_{(n)}) + f_2
 (u^j_{(n)}).
\end{align*}
If $h^j_{\infty}=0$, then $ u^j_{(n)} \in H^1(\R^d)$ satisfies
\begin{align*}
 \left(i\partial_t + \Delta\right) u^j_{(n)} =
 \frac{|\nabla|}{\left<\nabla\right>}f_1\left(\frac{\left<\nabla\right>}{|\nabla|}u^j_{(n)}\right).
\end{align*}

Let $u_n$ be a sequence of (local) solutions of \eqref{NLS} with
initial data in $\KKK^+$ at $t=0$, and let $v_n$ be the sequence of
the free solutions with the same initial data. We consider the
linear profile decomposition given by Proposition \ref{L:linear
profile}
\begin{align*}
\overrightarrow{v}_n(t,x) = \sum^{k-1}_{j=0}
\overrightarrow{v}^j_n(t,x) +
\overrightarrow{w}^k_n(t,x),\quad\overrightarrow{v}^j_n (t^j_n) =
T^j_n \varphi^j, \quad  v^j_n(0)\in \KKK^+.
\end{align*}
With each free concentrating wave $\{\overrightarrow{v}^j_n\}_{n\in
\N}$, we associate the nonlinear concentrating wave
$\{\overrightarrow{u}^{j}_{(n)}\}_{n\in \N}$. A nonlinear profile
decomposition of $u_n$ is given by
 \begin{align}\label{profile:nonlinear}
\overrightarrow{u}^{<k}_{(n)}(t,x):=\sum^{k-1}_{j=0}\overrightarrow{u}^j_{(n)}(t,x)
= \sum^{k-1}_{j=0} T^j_n
\overrightarrow{U}^j_{\infty}\left(\frac{t-t^j_n}{(h^j_n)^2}\right).
\end{align}

Since the smallness condition \eqref{small:w weak topology} and the orthogonality condition \eqref{orth:I} ensure that
every nonlinear concentrating wave and the remainder interacts weakly with the others,
we will show that $\overrightarrow{u}^{<k}_{(n)}+
\overrightarrow{w}^k_n$ is a good approximation for
$\overrightarrow{u}_n$ provided that each nonlinear profile has the finite
global Strichartz norm.

Now we define the Strichartz norms. First let $ST(I)$ and $ST^*(I)$ be the function spaces on $I \times \R^d$
defined as Section \ref{S:LWP and Pertubation}
\begin{align*}
ST(I):=  W_1(I) \cap W_2(I), \quad
ST^*(I):=   L^2_t \left(I;
B^{\frac2d}_{\frac{2d^2}{d^2+4}, 2} \right).
\end{align*}
The Strichartz norm for the nonlinear profile $\widehat{U}^j_{\infty}$ depends on the scaling
$h^j_{\infty}$.
\begin{align*}
ST^j_{\infty}(I):=\begin{cases}  W_1(I) \cap W_2(I), \quad & \text{for}\; h^j_{\infty}=1, \\
W_1(I), \quad & \text{for}\;
h^j_{\infty}=0.
\end{cases}
\end{align*}

By the similar arguments to that in \cite{IbrMN:f:NLKG, MiaoWZ:NLS:3d Combined}, we have

\begin{lemma}[\cite{MiaoWZ:NLS:3d Combined}, Lemma 5.6] \label{orth:III}
In the nonlinear profile decomposition
\eqref{profile:nonlinear}. Suppose that for each $j<K$, we have
\begin{align*}
\big\|\widehat{U}^j_{\infty}\big\|_{ST^j_{\infty}(\R)}
+\big\|\overrightarrow{U}^{j}_{\infty}\big\|_{L^{\infty}_tL^2_x(\R^d)}<\infty.
\end{align*}
Then for any finite interval $I$, any $j<K$
and any $k\leq K$, we have
\begin{align}
\varlimsup_{n\rightarrow +\infty}  \big\|u^j_{(n)}\big\|_{ST(I)}
\lesssim & \big\|\widehat{U}^j_{\infty}\big\|_{ST^j_{\infty}(\R)},\label{approximation:ST control:single} \\
\varlimsup_{n\rightarrow +\infty}
\big\|u^{<k}_{(n)}\big\|^2_{ST(I)}\lesssim
&\varlimsup_{n\rightarrow +\infty}  \sum_{j<k}
\big\|u^j_{(n)}\big\|^2_{ST(I)}, \label{approximation:ST control:all}
\end{align}
where the implicit constants do not depend on $I, j$ or $k$. We
also have
\begin{align}
\lim_{n\rightarrow +\infty} \left\| f_1\left(u^{<k}_{(n)}\right)   - \sum_{j<k}
\frac{\left<\nabla\right>^j_{\infty}}{\left<\nabla\right>}  f_1 \left(
\frac{\left<\nabla\right>}{\left<\nabla\right>^j_{\infty}}  u^j_{(n)} \right)  \right\|_{ST^*(I)}=0, \label{orth:ST norm:I}\\
\lim_{n\rightarrow +\infty} \left\| f_2\left(u^{<k}_{(n)}\right)   - \sum_{j<k}  \left( h^j_{\infty}\right)^{\frac{2}{d-1}}
\frac{\left<\nabla\right>^j_{\infty}}{\left<\nabla\right>}  f_2 \left(
\frac{\left<\nabla\right>}{\left<\nabla\right>^j_{\infty}}  u^j_{(n)} \right)  \right\|_{ST^*(I)}=0.  \label{orth:ST norm:II}
\end{align}
\end{lemma}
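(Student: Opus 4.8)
The plan is to follow the standard template for nonlinear profile approximation (as in \cite{IbrMN:f:NLKG, MiaoWZ:NLS:3d Combined}), exploiting the orthogonality relations \eqref{orth:I} together with the weak smallness \eqref{small:w weak topology} of the remainder. First I would prove the single-profile estimate \eqref{approximation:ST control:single}: by undoing the group action $T^j_n$ and rescaling, $\|u^j_{(n)}\|_{ST(I)}$ is comparable to a Strichartz-type norm of $\widehat{U}^j_\infty$ on the (time-rescaled, time-translated) interval $\tfrac{I-t^j_n}{(h^j_n)^2}$. The subtlety is that $ST(I) = W_1(I)\cap W_2(I)$ involves both the $\dot H^1$-critical and $\dot H^{1/2}$-subcritical scalings, whereas $ST^j_\infty(I)$ drops the $W_2$-component when $h^j_\infty = 0$. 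So when $h^j_n \to 0$ I would show that the $W_2(I)$-norm of $u^j_{(n)}$ is controlled by $(h^j_n)^{\theta}$ times a power of $\|\widehat{U}^j_\infty\|_{W_1}$ for some $\theta > 0$ (coming from the mismatch between the subcritical scaling exponent $\tfrac{4}{d-1}$ and the critical one $\tfrac{4}{d-2}$), hence is negligible for large $n$, while the $W_1(I)$-norm scales exactly and is controlled directly. The bound on $\|\overrightarrow{U}^j_\infty\|_{L^\infty_t L^2_x}$ ensures the finiteness in a neighbourhood of $\tau^j_\infty$ (including $\pm\infty$) so the rescaling is legitimate.

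Next I would prove the almost-orthogonality estimate \eqref{approximation:ST control:all}. Expanding $\|u^{<k}_{(n)}\|^2_{ST(I)} = \|\sum_{j<k} u^j_{(n)}\|^2_{ST(I)}$, the diagonal terms give $\sum_{j<k}\|u^j_{(n)}\|^2_{ST(I)}$, and the point is that the cross terms $\int |u^j_{(n)}|^{a}|u^l_{(n)}|^{b}$ with $j\neq l$ vanish in the limit $n\to\infty$. This follows from \eqref{orth:I}: for $j\neq l$ at least one of $\log|h^j_n/h^l_n|$, $|t^j_n-t^l_n|/(h^l_n)^2$, $|x^j_n-x^l_n|/h^l_n$ diverges, so after changing variables the two rescaled-and-translated profiles $T^l_n{}^{-1}u^j_{(n)}$ and $T^l_n{}^{-1}u^l_{(n)} = U^l_\infty(\cdot - \tau^l_n)$ have asymptotically disjoint space-time supports (in the $L^q_t L^r_x$ sense, after truncating the profiles by smooth compactly supported functions in space-time and using density/dispersive decay). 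A routine bilinear/Hölder estimate then bounds each cross term by $o(1)$ times products of the single-profile norms, and summing over $j,l < k$ gives \eqref{approximation:ST control:all} (the independence of the implicit constant on $k$ coming from the fact that the bound is ultimately in terms of $\sum_j \|\widehat U^j_\infty\|^2$, which is finite uniformly).

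Finally, for the nonlinearity estimates \eqref{orth:ST norm:I}--\eqref{orth:ST norm:II} I would expand $f_i(u^{<k}_{(n)}) = f_i(\sum_{j<k}u^j_{(n)})$ and write it as $\sum_{j<k} f_i(u^j_{(n)})$ plus a collection of genuine cross terms. The diagonal pieces are rewritten as $\tfrac{\langle\nabla\rangle^j_\infty}{\langle\nabla\rangle} f_i\big(\tfrac{\langle\nabla\rangle}{\langle\nabla\rangle^j_\infty}u^j_{(n)}\big)$ (with the extra factor $(h^j_\infty)^{2/(d-1)}$ for $f_2$, absorbing the scaling weight that appeared when undoing the group action) up to an error that vanishes because $\langle\nabla\rangle^j_n \to \langle\nabla\rangle^j_\infty$ in the relevant operator sense; here I would use the Hölder-continuity hypotheses on $h_1, h_2$ together with the fractional-derivative nonlinear estimates of Lemma (the $h_i(v)w$ bound) to control $f_i$ in $ST^*(I) = L^2_t(I; B^{2/d}_{2d^2/(d^2+4),2})$. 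The cross terms are handled exactly as in the previous paragraph: each involves a product with factors from at least two distinct profiles (or a profile and the truncated remainder), so the orthogonality \eqref{orth:I} forces it to $0$ in $ST^*(I)$, and the interaction with $\overrightarrow{w}^k_n$ uses \eqref{small:w weak topology} to first replace $w^k_n$ by something with small $ST$-norm after a Bernstein/interpolation argument. The main obstacle, as usual in this circle of ideas, is the bookkeeping in the $h^j_\infty = 0$ versus $h^j_\infty = 1$ dichotomy: one must check that every scaling weight $(h^j_n)^{2/(d-1)}$ produced by the subcritical term either converges to the stated limit $(h^j_\infty)^{2/(d-1)}$ or, when $h^j_\infty = 0$, that the corresponding contribution genuinely disappears from the limiting profile — this is precisely why the subcritical perturbation does not obstruct the reduction, and it is the place where the argument differs (favourably) from a naive adaptation of the energy-critical case.
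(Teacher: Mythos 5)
Your outline is correct and follows essentially the same route as the proof the paper appeals to (the lemma is quoted from the 3d companion paper and the Ibrahim--Masmoudi--Nakanishi template): single-profile bounds by undoing the group action, with the subcritical $W_2$-norm of concentrating ($h^j_\infty=0$) profiles killed by a positive power of $h^j_n$, cross-term decoupling from the orthogonality \eqref{orth:I} after approximating profiles by compactly supported functions, and identification of the diagonal terms using $\left<\nabla\right>^j_n\to\left<\nabla\right>^j_\infty$ together with the scaling weight $(h^j_n)^{\frac{2}{d-1}}$. One minor remark: the remainder $\overrightarrow{w}^k_n$ and the smallness \eqref{small:w weak topology} play no role in this particular lemma (they enter only in Proposition \ref{approximation}), so that part of your sketch is superfluous here.
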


After this preliminaries, we now show that $\overrightarrow{u}^{<k}_{(n)}+
\overrightarrow{w}^k_n$ is a good approximation for
$\overrightarrow{u}_n$ provided that each nonlinear profile has finite
global Strichartz norm.

\begin{proposition}[\cite{MiaoWZ:NLS:3d Combined}, Proposition 5.7]\label{approximation}
Let $u_n$ be a sequence of local solutions of \eqref{NLS} around
$t=0$ in $\KKK^+$ satisfying \begin{align*}
M\left(u_n\right)< \infty, \quad \varlimsup_{n\rightarrow
\infty} E(u_n)<m.
\end{align*} Suppose that in the nonlinear profile decomposition
\eqref{profile:nonlinear}, every nonlinear profile
$\widehat{U}^j_{\infty}$ has finite global Strichartz and energy
norms we have
\begin{align*}
\big\|\widehat{U}^j_{\infty}\big\|_{ST^j_{\infty}(\R)}
+\big\|\overrightarrow{U}^{j}_{\infty}\big\|_{L^{\infty}_tL^2_x(\R^d)}<\infty.
\end{align*}
 Then $u_n$ is bounded for large $n$ in the Strichartz and the
 energy norms
\begin{align*}\varlimsup_{n\rightarrow \infty}
\big\|u_n\big\|_{ST(\R)} +
\big\|\overrightarrow{u}_n\big\|_{L^{\infty}_tL^2_x(\R)} < \infty.
\end{align*}
\end{proposition}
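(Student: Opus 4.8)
The plan is to view, on each finite interval $I$,
\[
\overrightarrow{\tilde u}^{\,k}_n := \overrightarrow{u}^{<k}_{(n)} + \overrightarrow{w}^{k}_n
\]
as an approximate solution of the $\left<\nabla\right>$--conjugated equation solved by $\overrightarrow{u}_n$, and to feed it into the stability result Proposition~\ref{stability}. Concretely, one has to verify: (i) a bound $\|\tilde u^{\,k}_n\|_{ST(I)}\le L$ uniform in $n$ \emph{and} $k$; (ii) a uniform bound on $\|\overrightarrow{\tilde u}^{\,k}_n(0)\|_{L^2}$ together with the convergence $\|\overrightarrow{u}_n(0)-\overrightarrow{\tilde u}^{\,k}_n(0)\|_{L^2}\to 0$; and (iii) that the error
\[
e^{k}_n := (i\partial_t+\Delta)\overrightarrow{\tilde u}^{\,k}_n - \left<\nabla\right>f_1\!\big(\left<\nabla\right>^{-1}\overrightarrow{\tilde u}^{\,k}_n\big) - \left<\nabla\right>f_2\!\big(\left<\nabla\right>^{-1}\overrightarrow{\tilde u}^{\,k}_n\big)
\]
becomes small in the dual Strichartz norm of \eqref{small:gamma0} after letting $n\to\infty$ and then $k\to K$. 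Granting (i)--(iii), Proposition~\ref{stability} yields $\|u_n\|_{ST(I)}+\|\overrightarrow{u}_n\|_{L^\infty_tL^2_x(I)}\lesssim 1$ uniformly on every finite $I$, and a standard continuity/exhaustion argument upgrades this to $I=\R$.

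For (i) and (ii) the key structural fact is that only finitely many profiles really matter: by the mass and energy orthogonalities \eqref{orth:mass}, \eqref{orth:energy} and Proposition~\ref{decomp:stable:K} (all profiles lie in $\KKK^+$, so they have nonnegative energy and $\sum_j\varliminf_nE(v^j_n(0))\le m$, and likewise for the mass), all but finitely many profiles have small mass and small energy, hence small $H^1$ norm by Lemma~\ref{free-energ-equiva}; for those the small--data theory of Theorem~\ref{lwp} gives $\|\widehat U^j_\infty\|_{ST^j_\infty(\R)}\lesssim\|v^j_n(0)\|_{H^1}$, so that $\sum_{j<K}\|\widehat U^j_\infty\|^2_{ST^j_\infty(\R)}\lesssim 1$. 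Combined with Lemma~\ref{orth:III} (the bounds \eqref{approximation:ST control:single}--\eqref{approximation:ST control:all}) and with $\|w^k_n\|_{ST(I)}\lesssim\|\overrightarrow{w}^k_n(0)\|_{L^2}\lesssim 1$ (Strichartz applied to the free remainder), this gives (i) with $L$ independent of $n$ and $k$. For (ii): since $u_n(0)\in\KKK^+$, conservation of mass and energy together with the sign argument of Section~\ref{S:blow up} keep $u_n(t)\in\KKK^+$, whence $\|\nabla u_n(t)\|^2_{L^2}\lesssim E(u_n)\le m$ and $\|u_n(t)\|_{L^2}^2=2M(u_n)\lesssim 1$ by Lemma~\ref{free-energ-equiva}; the corresponding bound for $\overrightarrow{\tilde u}^{\,k}_n(0)$ follows from \eqref{orth:mass}, \eqref{orth:energy} and the bound on $\overrightarrow{w}^k_n$, while $\|\overrightarrow{u}_n(0)-\overrightarrow{\tilde u}^{\,k}_n(0)\|_{L^2}\to 0$ because $\overrightarrow{u}_n(0)=\sum_{j<k}\overrightarrow{u}^j_n(0)+\overrightarrow{w}^k_n(0)$ is the linear profile decomposition at $t=0$ and $\overrightarrow{u}^j_{(n)}(0)-\overrightarrow{u}^j_n(0)\to 0$ by construction.

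The heart of the matter, and the step I expect to be the main obstacle, is the error estimate (iii). Since each $\overrightarrow{u}^j_{(n)}$ solves its own rescaled nonlinear equation and $\overrightarrow{w}^k_n$ the free one, after applying $\left<\nabla\right>$ the error decomposes (for $f_1$, and the same for $f_2$) as
\[
\Big[\sum_{j<k}\tfrac{\left<\nabla\right>^j_\infty}{\left<\nabla\right>}f_1\!\big(\tfrac{\left<\nabla\right>}{\left<\nabla\right>^j_\infty}u^j_{(n)}\big)-f_1\big(u^{<k}_{(n)}\big)\Big]+\big[f_1\big(u^{<k}_{(n)}\big)-f_1\big(\left<\nabla\right>^{-1}\overrightarrow{\tilde u}^{\,k}_n\big)\big].
\]
The first bracket is precisely the quantity shown to vanish in $ST^*(I)$ by \eqref{orth:ST norm:I}--\eqref{orth:ST norm:II}; this is where the pseudo--orthogonality of the concentrating waves coming from \eqref{orth:I}, and the decoupling of their nonlinear interactions, are exploited. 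The second bracket measures the effect of inserting the remainder; to handle it I would first upgrade the weak vanishing \eqref{small:w weak topology} of $\overrightarrow{w}^k_n$ in $L^\infty_tB^{-d/2}_{\infty,\infty}$ to vanishing in the Strichartz spaces $W_1(I),W_2(I)$ that actually enter the nonlinear estimates, by interpolating it against the uniform higher--regularity Strichartz bound on $\overrightarrow{w}^k_n$; then the fractional chain rule together with the H\"older--type nonlinear estimates of Section~\ref{S:pre} bound this bracket in $ST^*(I)$ by a vanishing power of $\|w^k_n\|_{W_1(I)\cap W_2(I)}$ times the controlled norm $L$. Letting $n\to\infty$ annihilates the first bracket and all the cross terms, and then letting $k\to K$ makes the residual tail $\sum_{j\ge k}\|\widehat U^j_\infty\|^2_{ST^j_\infty(\R)}$ (together with the $w$--contribution) fall below the threshold $\varepsilon_0(E_0,E',L)$ of Proposition~\ref{stability}. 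The genuine difficulty is the careful interleaving of the $k\to K$ and $n\to\infty$ limits while keeping $L$ fixed, and the upgrade from weak smallness of $\overrightarrow{w}^k_n$ to smallness in the nonlinear norms; both are carried out as in \cite{MiaoWZ:NLS:3d Combined}, now with the dimension--dependent exponents introduced in Section~\ref{S:pre}.
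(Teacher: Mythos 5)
Your proposal follows essentially the same route as the paper's proof: verify the hypotheses of Proposition \ref{stability} for the approximate solution $u^{<k}_{(n)}+w^k_n$, with the initial-data convergence from the construction of $\overrightarrow{u}^j_{(n)}$, the uniform Strichartz bound obtained by isolating the finitely many large profiles and treating the rest by small-data theory together with Lemma \ref{orth:III}, and the smallness of the error coming from \eqref{orth:ST norm:I}--\eqref{orth:ST norm:II} plus the vanishing of the terms involving the remainder $w^k_n$. The only divergence is technical and minor: where you propose to upgrade \eqref{small:w weak topology} by interpolation against Strichartz bounds, the paper (in a footnote) appeals to the local smoothing effect of the free evolution, as in \cite{AkaIKN:NLS:combined:scattering, KiV:en-NLS:high dim}, to handle the full derivative $\left<\nabla\right>$ in the interaction between the nonlinear concentrating waves and the remainder.
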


\begin{proof}We only need to verify the condition of Proposition \ref{stability}. Note that $u^{<k}_{(n)} + w^k_n$ satisfies that
\begin{align*}
  & \left(i \partial_t + \Delta\right) \left(u^{<k}_{(n)} + w^k_n\right)=f_1\left(u^{<k}_{(n)} + w^k_n\right) + f_2\left( u^{<k}_{(n)} + w^k_n\right)\\
 &\qquad + f_1\left(u^{<k}_{(n)}\right) - f_1\left(u^{<k}_{(n)}+ w^k_n \right)
  + f_2\left(u^{<k}_{(n)}\right) - f_2\left(u^{<k}_{(n)}+ w^k_n \right)\\
& \qquad+ \sum_{j<k}
\frac{\left<\nabla\right>^j_{\infty}}{\left<\nabla\right>}  f_1 \left(
\frac{\left<\nabla\right>}{\left<\nabla\right>^j_{\infty}}  u^j_{(n)} \right) - f_1\left(u^{<k}_{(n)}\right)   + \sum_{j<k}  \left(h^j_{\infty}\right)^{\frac{2}{d-1}}
\frac{\left<\nabla\right>^j_{\infty}}{\left<\nabla\right>}  f_2 \left(
\frac{\left<\nabla\right>}{\left<\nabla\right>^j_{\infty}}  u^j_{(n)} \right) -  f_2\left(u^{<k}_{(n)}\right).
\end{align*}

First, by the construction of $\overrightarrow{u}^{<k}_{(n)}$, we know that
\begin{align*}
\left\| \left(\overrightarrow{u}^{<k}_{(n)} (0) + \overrightarrow{w}^k_n(0)\right) -\overrightarrow{u}_n(0) \right\|_{ L^2_x}
\leq \sum_{j<k} \left\|\overrightarrow{u}^j_{(n)}(0) -\overrightarrow{u}^j_n(0) \right\|_{ L^2_x} \rightarrow 0,
\end{align*}
as $n\rightarrow +\infty$, which also implies that for large $n$, we have
\begin{align*}
\left\| \overrightarrow{u}^{<k}_{(n)} (0) + \overrightarrow{w}^k_n(0)  \right\|_{ L^2_x} \leq E_0.
\end{align*}

Next, by the linear profile decomposition in Proposition \ref{L:linear profile}, we know that
\begin{align*}
\big\|u_n(0)\big\|^2_{L^2}= &  \left\|v_n(0)\right\|^2_{L^2_x}
= \sum_{j<k}\left\|v^j_n(0)\right\|^2_{L^2_x} + \left\|w^k_n(0)\right\|^2_{L^2_x} + o_n(1)\\
\geq &  \sum_{j<k}\left\|v^j_n(0)\right\|^2_{L^2_x} + o_n(1)
=     \sum_{j<k}\left\|u^j_{(n)}(0)\right\|^2_{L^2_x} + o_n(1),
\\
\left\|u_n(0)\right\|^2_{\dot H^1_x}=& \left\|v_n(0)\right\|^2_{\dot H^1_x}
= \sum_{j<k}\left\|v^j_n(0)\right\|^2_{\dot H^1_x} + \left\|w^k_n(0)\right\|^2_{\dot H^1_x} + o_n(1)\\
\geq &  \sum_{j<k}\left\|v^j_n(0)\right\|^2_{\dot H^1_x} + o_n(1)
=     \sum_{j<k}\left\|u^j_{(n)}(0)\right\|^2_{\dot H^1_x} + o_n(1),
\end{align*}
which means except for a finite set $J\subset \N$, the energy of $u^j_{(n)}$ with $j\not \in J$ is smaller than the iteration
threshold, hence we have
\begin{align*}
\big\|u^j_{(n)}\big\|_{ST(\R)} \lesssim \big\|\overrightarrow{u}^j_{(n)}(0)\big\|_{L^2_x},
\end{align*}
thus, for any finite interval $I$, by Lemma \ref{orth:III}, we have
\begin{align*}
\sup_k \varlimsup_{n\rightarrow +\infty} \big\|u^{<k}_{(n)}\big\|^2_{ST(I)} \lesssim &  \sup_k \varlimsup_{n\rightarrow +\infty}\sum_{j<k} \big\|u^{j}_{(n)}\big\|^2_{ST(I)}\\
= & \sup_k \varlimsup_{n\rightarrow +\infty} \left[\sum_{j<k, j\in J} \big\|u^{j}_{(n)}\big\|^2_{ST(I)} + \sum_{j<k, j\not \in J} \big\|u^{j}_{(n)}\big\|^2_{ST(I)}\right]\\
\lesssim & \sum_{j<k, j\in J} \big\|\widehat{U}^{j}_{\infty}\big\|^2_{ST^j_{\infty}(I)} + \sup_k \varlimsup_{n\rightarrow +\infty} \sum_{j<k, j\not \in J} \big\|\overrightarrow{u}^j_{(n)}(0)\big\|^2_{L^2_x}\\
< & \infty.
\end{align*}
This together with the Strichartz estimate for $w^k_n$ implies that
\begin{align*}
\sup_k \varlimsup_{n\rightarrow +\infty} \big\|u^{<k}_{(n)} + w^k_n\big\|^2_{ST(I)} <  \infty.
\end{align*}

Last we need show the nonlinear perturbation is small in some sense. By Proposition \ref{L:linear profile} and Lemma \ref{orth:III}, we have
\footnote{Since we use the full derivative $\left<\nabla\right>$, we need use the local smoothing effect about the free solution $w(t,x)$
as that in \cite{AkaIKN:NLS:combined:scattering, KiV:en-NLS:high dim} to verify the weak interaction between the nonlinear concentrating waves $u^j_{(n)}(t,x)$ and the remainder $w(t,x)$.}
\begin{align*}
& \left\| f_1\left(u^{<k}_{(n)}\right) - f_1\left(u^{<k}_{(n)}+ w^k_n \right)  \right\|_{ST^*(I)} \rightarrow 0 ,\\
 &  \left\| f_2\left(u^{<k}_{(n)}\right) - f_2\left(u^{<k}_{(n)}+ w^k_n \right) \right\|_{ST^*(I)} \rightarrow 0,
 \end{align*}
and
\begin{align*}
&  \left\| \sum_{j<k}
\frac{\left<\nabla\right>^j_{\infty}}{\left<\nabla\right>}  f_1 \left(
\frac{\left<\nabla\right>}{\left<\nabla\right>^j_{\infty}}  u^j_{(n)} \right) - f_1\left(u^{<k}_{(n)}\right)  \right\|_{ST^*(I)} \rightarrow 0,  \\
&   \left\| \sum_{j<k}  h^j_{\infty}
\frac{\left<\nabla\right>^j_{\infty}}{\left<\nabla\right>}  f_2 \left(
\frac{\left<\nabla\right>}{\left<\nabla\right>^j_{\infty}}  u^j_{(n)} \right) -  f_2\left(u^{<k}_{(n)}\right)  \right\|_{ST^*(I)} \rightarrow 0,
\end{align*}
as $n\rightarrow +\infty$. Therefore, by Proposition \ref{stability}, we can obtain the desired result, which concludes the proof.
\end{proof}

%
%
%
%

\section{Part II: GWP and Scattering for $\KKK^+$}\label{S:GWP-Scattering}
We now use the stability analysis of the
scattering solution of \eqref{NLS} and the compactness analysis  of
a sequence of the energy solutions of \eqref{NLS} to show the scattering result of Theorem
\ref{theorem} by contradiction.

For any finite positive number $C<\infty$, let $E^*_C$ be the threshold for the uniform Strichartz norm bound,
i.e.,
\begin{align*}
E^*_C:=\sup\{A>0, ST(A)<\infty\}
\end{align*}
where $ST(A)$ denotes the supremum of $\big\|u\big\|_{ST(I)}$ for any
strong solution $u$ of \eqref{NLS} in $\KKK^+$ on any interval $I$ satisfying
$E(u)\leq A$, $M(u)\leq C$.

The small solution scattering theory gives us $E^*_C>0$. We are going to show that $E^*_C\geq m$ by contradiction.
From now on, suppose that $E^*_C\geq m$ fails,
that is, we assume that
\begin{align}\label{contradiction:J}
E^*_C<m.
\end{align}

\subsection{Existence of a critical element} This part is similar to section 6.1 in \cite{MiaoWZ:NLS:3d Combined}.
By the definition of $E^*_C$ and the fact that $E^*_C<m$, there exist
a sequence of solutions $\{u_n\}_{n\in \N}$  of \eqref{NLS} in $\KKK^+$, which have the maximal existence interval $I_n$ and satisfy that for some finite number $C$
\begin{align*}
M(u_n) \leq C,
\quad E(u_n)\rightarrow E^*_C<m, \quad \big\|u_n\big\|_{ST(I_n)}\rightarrow
+\infty, \quad \text{as}\;\; n\rightarrow +\infty,
\end{align*}
then we have $\big\|u_n\big\|_{H^1}<\infty$ by Lemma \ref{free-energ-equiva}. By the compact argument (profile decomposition) and the stability theory, we can show that

\begin{theorem}\label{APS:existence} For $d\geq 5$. Let $u_n$ be a sequence of solutions of \eqref{NLS} in $\KKK^+$ on
$I_n\subset \R$ satisfying
\begin{align*}
M(u_n) \leq  C,
\quad E(u_n)\rightarrow E^*_C<m, \quad \big\|u_n\big\|_{ST(I_n)}\rightarrow
+\infty, \quad \text{as}\;\; n\rightarrow +\infty.
\end{align*}
Then there exists a global solution $u_c$ of \eqref{NLS} in $\KKK^+$
satisfying
\begin{align*}
E(u_c)=E^*_C<m, \quad K(u_c) > 0, \quad \big\|u_c\big\|_{ST(\R)}=\infty.
\end{align*}
In addition, there are a sequence $(t_n, x_n)\in \R \times \R^d$ and
$\varphi \in L^2(\R^d)$ such that, up to a subsequence,  we have as $n\rightarrow +\infty$,
\begin{align}\label{compact:initial}
\left\|\frac{|\nabla|}{\left<\nabla \right>}\Big(\overrightarrow{u}_n(0, x) -
e^{-it_n\Delta}\varphi(x-x_n)\Big)\right\|_{L^2}\rightarrow 0.
\end{align}
\end{theorem}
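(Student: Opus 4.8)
The plan is to run the standard concentration–compactness reduction of Kenig–Merle type, adapted to the combined nonlinearity. First I would apply the linear profile decomposition (Proposition \ref{L:linear profile}) to the sequence $\overrightarrow v_n(0)=\left<\nabla\right>u_n(0)$, which is bounded in $L^2$ by Lemma \ref{free-energ-equiva}; after passing to a subsequence one gets profiles $\varphi^j$, parameters $(t^j_n,x^j_n,h^j_n)$, and a remainder $\overrightarrow w^k_n$ with the orthogonality relations \eqref{orth:mass}–\eqref{orth:scalingderivative}. By Proposition \ref{decomp:stable:K}, for large $n$ every $v^j_n(0)$ and every $w^k_n(0)$ lies in $\KKK^+$, the energies are nonnegative and sum (asymptotically) to $E^*_C<m$, and each individual profile energy is $\le E^*_C$.

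The key dichotomy is whether more than one profile is nontrivial. Suppose first that there are at least two nontrivial profiles, or one nontrivial profile with strictly smaller energy, i.e. $\varlimsup_n E(v^j_n(0))<E^*_C$ for every $j$. Then for each $j$ the associated nonlinear profile $\widehat U^j_\infty$ has energy below the threshold $E^*_C$ (using \eqref{orth:energy}, \eqref{orth:mass}, and the fact that $K>0$ is preserved), so by the definition of $E^*_C$ it has finite global Strichartz norm $\|\widehat U^j_\infty\|_{ST^j_\infty(\R)}+\|\overrightarrow U^j_\infty\|_{L^\infty_tL^2_x}<\infty$ — for the $h^j_\infty=0$ case this is the scattering statement for the focusing energy-critical equation with subthreshold energy (Killip–Visan), and for $h^j_\infty=1$ it is the inductive hypothesis on \eqref{NLS} itself. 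Then Proposition \ref{approximation} forces $\varlimsup_n\|u_n\|_{ST(\R)}<\infty$, contradicting $\|u_n\|_{ST(I_n)}\to\infty$. Hence exactly one profile is nontrivial and it carries all the energy: $K=1$, $\varphi^0=\varphi$, $w^1_n\to 0$ in $L^\infty_t\dot H^1_x$ (the equality case of Proposition \ref{decomp:stable:K}), and in fact $w^1_n\to 0$ in $H^1$ by \eqref{orth:mass}.

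With a single profile, set $\overrightarrow u_c$ to be the nonlinear solution with data $\left<\nabla\right>u_c(0)=\varphi$-rescaled appropriately (after translating $t^0_n\to\tau^0_\infty$ and absorbing $x^0_n$); \eqref{compact:initial} is then just the statement that $u_n(0)$ converges, modulo symmetries, to $u_c(0)$ in the $|\nabla|\left<\nabla\right>^{-1}$-topology, which follows from the one-profile decomposition together with $w^1_n\to0$. One checks $u_c\in\KKK^+$ (closedness of $\KKK^+$ under the convergence, plus Lemma \ref{uniform bound} to rule out $K(u_c)=0$, forcing $K(u_c)>0$ since $u_c\not\equiv0$), $E(u_c)=E^*_C$ (by \eqref{orth:energy} and $w^1_n\to0$), and $M(u_c)\le C$. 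Finally $\|u_c\|_{ST(\R)}=\infty$: if it were finite, Proposition \ref{stability} applied with $w=u_c$ (after undoing symmetries) and the small-data/continuity theory would give $\varlimsup_n\|u_n\|_{ST(I_n)}<\infty$, again a contradiction; this also yields global existence of $u_c$ by the blow-up criterion in Theorem \ref{lwp}. The main obstacle is the verification that each nonlinear profile genuinely has subthreshold energy and hence finite Strichartz norm — this requires the energy additivity \eqref{orth:energy} to survive passage to the nonlinear profiles uniformly in the small-profile tail (the set $J$ argument inside Proposition \ref{approximation}), and care with the two possible scalings $h^j_\infty\in\{0,1\}$, since the $h^j_\infty=0$ profiles solve a genuinely different (purely energy-critical) limiting equation whose scattering theory is imported from the literature rather than from the induction.
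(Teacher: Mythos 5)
Your reduction to a single profile follows the paper's route (linear/nonlinear profile decomposition, Proposition \ref{decomp:stable:K}, Proposition \ref{approximation}, induction on $E^*_C$), but there is a genuine gap at the single-profile stage: you never rule out that the surviving profile concentrates, i.e.\ $h^0_n\to 0$. If $h^0_n\to 0$, the nonlinear profile $\widehat U^0_\infty$ does not solve \eqref{NLS} at all --- it solves the limiting focusing energy-critical equation $(i\partial_t+\Delta)\widehat U^0_\infty=f_1(\widehat U^0_\infty)$ --- so one cannot ``rescale appropriately'' to obtain a critical element for \eqref{NLS}, and \eqref{compact:initial}, which contains only the translation parameters $(t_n,x_n)$ and no scaling parameter, would simply fail. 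This is exactly where the paper invokes Killip--Visan: in the one-profile case the limiting problem satisfies $E^c\big(\widehat U^0_\infty\big)\le E^*_C<m$ and $K^c\ge 0$, hence scatters and has finite $W_1$ norm, contradicting the fact (forced by Proposition \ref{approximation}) that the single nonlinear profile must have infinite Strichartz norm; therefore $h^0_n\equiv 1$ and \eqref{compact:initial} follows. In your proposal Killip--Visan is used only to dispose of subthreshold $h^j_\infty=0$ profiles in the multi-profile alternative, which is not where it is essential.

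A second gap concerns globality: it does not follow ``by the blow-up criterion in Theorem \ref{lwp}.'' The equation is energy-critical, so the $H^1$ bound coming from $K\ge 0$ and $E<m$ does not preclude finite-time blow-up, and the blow-up criterion only says that the $ST$ norm is infinite up to a finite $T_{max}$ --- which is perfectly consistent with a critical element. The paper proves $u_c$ is global by a further compactness argument: if $t_n$ approaches a finite maximal time, one applies the theorem again to $u_c(t+t_n)$ to obtain \eqref{compact:t_n}, deduces that
\begin{align*}
\big\| e^{it\Delta}\overrightarrow{U}^0_{\infty}(t_n)\big\|_{V_2([-\delta,\delta])}\leq \varepsilon
\end{align*}
for large $n$ with $\delta$ independent of $n$, and then the small-data theory extends the solution to $[t_n-\delta,t_n+\delta]$, contradicting the choice of $t_n$. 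Your argument for $\big\|u_c\big\|_{ST(\R)}=\infty$ (stability against $u_n$) is fine once $u_c$ is known to exist on its maximal interval, but the globality step needs this additional compactness-plus-small-data argument, which is missing from your proposal.
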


\begin{proof} By the time translation symmetry of \eqref{NLS}, we can translate $u_n$ in $t$ such that $0\in I_n$ for all $n$.
Then by the linear and nonlinear profile decomposition of $u_n$, we have
\begin{align*}
e^{it\Delta }\overrightarrow{u}_n(0,x) = &
\sum_{j<k}\overrightarrow{v}^j_n(t,x) + \overrightarrow{w}^k_n(t,x),
 \quad \overrightarrow{v}^j_n(t,x) =
e^{i(t-t^j_n)\Delta}T^j_n\varphi^j,\\
\overrightarrow{u}^{<k}_{(n)}(t,x) =  & \sum_{j<k}
\overrightarrow{u}^j_{(n)}(t,x),  \quad
\overrightarrow{u}^j_{(n)}(t,x)=T^j_n\overrightarrow{U}^j_{\infty}\left(\frac{x-x^j_n}{(h^j_n)^2}\right),\\
&  \big\|\overrightarrow{u}^j_{(n)}(0) -\overrightarrow{v}^j_n(0)\big\|_{L^2}\rightarrow
0.
\end{align*}

By Proposition \ref{decomp:stable:K} and the following observations that
\begin{enumerate}
\item Every solution of \eqref{NLS} in $\KKK^+$ with the
energy less than $E^*_C$, the mass less than $C$ has global finite Strichartz norm by the
definition of $E^*_C$.

\item Proposition \ref{approximation} precludes that all the nonlinear profiles
$\overrightarrow{U}^j_{\infty}$ have finite global Strichartz norm.
\end{enumerate}
we deduce that there is only one profile and
\begin{align*}
E(u^0_{(n)}(0)) \rightarrow E^*_C, \quad u^0_{(n)}(0)\in \KKK^+, \quad
\big\|\widehat{U}^{0}_{\infty}\big\|_{ST^{0}_{\infty}(I)}=\infty, \quad
\big\|w^1_{n}\big\|_{L^{\infty}_t\dot H^1_x}\rightarrow 0.
\end{align*}

If $h^0_{n}\rightarrow 0$, then
$\widehat{U}^0_{\infty}=|\nabla|^{-1} \overrightarrow{U}^0_{\infty}$
solves the $\dot H^1$-critical NLS
\begin{align*}
\left(i\partial_t + \Delta \right)\widehat{U}^0_{\infty} = f_1
(\widehat{U}^0_{\infty})
\end{align*}
and satisfies
\begin{align*}
E^c\left(\widehat{U}^0_{\infty}(\tau^0_{\infty})\right)=E^*_C <m,
\;\; K^c\left(\widehat{U}^0_{\infty}(\tau^0_{\infty})\right)\geq 0,
\;\; \big\|\widehat{U}^0_{\infty}\big\|_{W_1(I)}=\infty.
\end{align*}
However, it is in contradiction with Killip-Visan's result in \cite{KiV:en-NLS:high dim}. Hence $h^0_n\equiv1$,
which implies \eqref{compact:initial}.

Now we show that $\widehat{U}^0_{\infty}=\left<\nabla\right>^{-1}
\overrightarrow{U}^j_{\infty}$ is a global solution, which is the consequence of the compactness of \eqref{compact:initial}.
Suppose not, then we can choose a sequence $t_n \in \R$ which
approaches the maximal existence time. Since
$\widehat{U}^0_{\infty}(t+t_n)$ satisfies the assumption of this
theorem, then applying the above argument to it, we obtain that for some $\psi\in L^2$ and another sequence $(t'_n, x'_n)\in
\R\times \R^d$, as $n\rightarrow +\infty$
\begin{align}\label{compact:t_n}
\left\|\frac{|\nabla|}{\left<\nabla\right>}\left(\overrightarrow{U}^0_{\infty}(t_n)-e^{-it'_n\Delta}\psi(x-x'_n)\right)\right\|_{L^2}\rightarrow
0.
\end{align}
 Let
$
\overrightarrow{v}(t):=e^{it\Delta}\psi.
$
For any $\varepsilon>0$, there exist $\delta>0$ with $I=[-\delta,
\delta]$ such that
\begin{align*}
\big\| \overrightarrow{v}(t-t'_n)\big\|_{V_2(I)}\leq
\varepsilon, \end{align*} which together with \eqref{compact:t_n}
implies that for sufficiently large $n$
\begin{align*}
\big\|
e^{it\Delta}\overrightarrow{U}^0_{\infty}(t_n)\big\|_{V_2(I)} \leq
 \varepsilon.
\end{align*}
If $\varepsilon$ is small enough, this implies that the solution
$\widehat{U}^0_{\infty}$ exists on $[t_n-\delta, t_n+\delta]$ for
large $n$ by the small data theory. This contradicts the choice of
$t_n$. Hence $\widehat{U}^0_{\infty}$ is a global solution and it is just the desired
critical element $u_c$. By Proposition \ref{threshold-energy}, we know that $K(u_c)>0$.
\end{proof}

\subsection{Compactness of the critical element} Since \eqref{NLS} is
symmetric in $t$, we may assume that
\begin{align}\label{critical:infinity Strich}
\big\| u_c\big\|_{ST(0, +\infty)}=\infty.
\end{align}
We call it a forward critical element.

\begin{proposition}\label{compact:APS}For $d\geq 5$.
Let $u_c$ be a forward critical element. Then there exists $x(t):(0,
\infty)\rightarrow \R^d$ such that the set
\begin{align*}
\{u_c(t, x-x(t)); 0<t<\infty\}
\end{align*}
is precompact in $\dot H^s$ for any $s\in(0, 1]$.
\end{proposition}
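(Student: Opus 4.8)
\emph{Sketch of the proof.}
The plan is to carry out the standard extraction of a compact orbit for a minimal non-scattering solution, feeding in the one-profile rigidity already proved in Theorem~\ref{APS:existence}, and then to upgrade compactness from $\dot H^1$ to every $\dot H^s$ with $0<s\le1$ by an elementary Littlewood--Paley estimate that uses the uniform mass bound $M(u_c)\le C$. It is enough to show that for every sequence $t_n\to+\infty$ there exist $x_n\in\R^d$ and a subsequence along which $u_c(t_n,\cdot-x_n)$ converges in $\dot H^1$: sequences confined to a compact subinterval of $(0,\infty)$ are handled by continuity of $t\mapsto u_c(t)$ in $\dot H^s$, and the usual selection argument (cf.\ \cite{Kiv:Clay Lecture}) then produces a global map $x(t)$ for which $\{u_c(t,\cdot-x(t))\}$ is precompact.

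Fix $t_n\to+\infty$. The translates $u_c(t_n+\cdot)$ are solutions of \eqref{NLS} in $\KKK^+$ with $M\le C$ and $E=E^*_C<m$, and $\|u_c(t_n+\cdot)\|_{ST((0,+\infty))}=\infty$, since $\|u_c\|_{ST((0,t_n))}<\infty$ while $\|u_c\|_{ST((0,+\infty))}=\infty$ by \eqref{critical:infinity Strich}. Hence the argument giving \eqref{compact:t_n} in the proof of Theorem~\ref{APS:existence}, applied to this sequence, provides $\psi\in L^2(\R^d)$, $t'_n\in\R$ and $x'_n\in\R^d$ with
\begin{align}\label{sketch:compact-tn}
\left\|\frac{|\nabla|}{\langle\nabla\rangle}\Bigl(\overrightarrow{u}_c(t_n)-e^{-it'_n\Delta}\psi(\cdot-x'_n)\Bigr)\right\|_{L^2}\longrightarrow0 .
\end{align}
Here the difference $g_n:=\overrightarrow{u}_c(t_n)-e^{-it'_n\Delta}\psi(\cdot-x'_n)$ is, up to the normalizations of Proposition~\ref{L:linear profile}, the linear remainder of the (single-profile) decomposition, so $\sup_n\|g_n\|_{L^2}<\infty$ (by Lemma~\ref{free-energ-equiva} and $\psi\in L^2$) and $\|e^{it\Delta}g_n\|_{L^\infty_t(\R;B^{-d/2}_{\infty,\infty})}\to0$ by \eqref{small:w weak topology}.

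The key step is that $\{t'_n\}$ is bounded. After passing to a subsequence, $t'_n\to t'_\infty\in[-\infty,+\infty]$, and we rule out $t'_\infty=\pm\infty$. Since $V_2$ is an $L^2$-admissible Strichartz norm with finite time exponent, an interpolation between $L^\infty_t B^{-d/2}_{\infty,\infty}$ and $L^\infty_tL^2_x$ (in the spirit of \cite{BahG:NLW:proffile decomp, Kiv:Clay Lecture}) gives $\|e^{it\Delta}g_n\|_{V_2(\R)}\to0$, while $\|e^{it\Delta}\psi\|_{V_2(\R)}\lesssim\|\psi\|_{L^2}<\infty$. If $t'_n\to-\infty$, then $\|e^{is\Delta}\psi\|_{V_2((-t'_n,+\infty))}\to0$, so combining with \eqref{sketch:compact-tn} and the previous two facts, $\|\langle\nabla\rangle e^{i(t-t_n)\Delta}u_c(t_n)\|_{V_2((t_n,+\infty))}\to0$; by the small-data theory (Proposition~\ref{stability}) this forces $\|u_c\|_{ST((t_n,+\infty))}<\infty$ for large $n$, hence $\|u_c\|_{ST((0,+\infty))}\le\|u_c\|_{ST((0,t_n))}+\|u_c\|_{ST((t_n,+\infty))}<\infty$, contradicting \eqref{critical:infinity Strich}. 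If $t'_n\to+\infty$, the same computation on $(-\infty,t_n)$ gives $\|\langle\nabla\rangle e^{i(t-t_n)\Delta}u_c(t_n)\|_{V_2((-\infty,t_n))}\to0$, so $\|u_c\|_{ST((-\infty,t_n))}\le C_0$ uniformly for large $n$; since $t\mapsto\|u_c\|_{ST((-\infty,t))}$ is nondecreasing and $t_n\to+\infty$, this forces $\|u_c\|_{ST(\R)}\le C_0<\infty$, again contradicting \eqref{critical:infinity Strich}. Thus $t'_n\to t'_\infty\in\R$, and by strong $L^2$-continuity of the free group \eqref{sketch:compact-tn} reads, after the translation $x_n:=-x'_n$ and using $\tfrac{|\nabla|}{\langle\nabla\rangle}\langle\nabla\rangle=|\nabla|$,
\begin{align*}
\bigl\|u_c(t_n,\cdot-x_n)-e^{-it'_\infty\Delta}\langle\nabla\rangle^{-1}\psi\bigr\|_{\dot H^1}\longrightarrow0 ,
\end{align*}
which is the desired $\dot H^1$ precompactness.

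To pass from $\dot H^1$ to $\dot H^s$ for $0<s<1$, note that the orbit is bounded in $L^2$ because $M(u_c(t))\le C$, so it suffices that $g_n\to g$ in $\dot H^1$ with $\sup_n\|g_n\|_{L^2}<\infty$ implies $g_n\to g$ in $\dot H^s$: in a Littlewood--Paley decomposition the frequencies $\ge1$ are controlled by $\|g_n-g\|_{\dot H^1}$, the frequencies $\le\eta$ contribute $\lesssim\eta^{2s}(\sup_n\|g_n\|_{L^2}+\|g\|_{L^2})^2$, and each of the finitely many intermediate dyadic pieces satisfies $\|P_N(g_n-g)\|_{L^2}\lesssim N^{-1}\|g_n-g\|_{\dot H^1}\to0$; letting $n\to\infty$ and then $\eta\to0$ finishes it. I expect the control of the remainder $g_n$ in the $V_2$ norm to be the delicate point: a plain Strichartz bound only gives $\|e^{it\Delta}g_n\|_{V_2}\lesssim\|g_n\|_{L^2}$, i.e.\ boundedness, and it is precisely the ``no residual bubbles'' property \eqref{small:w weak topology} that upgrades this to smallness. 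Finally, the present argument reaches only $s>0$ because \eqref{sketch:compact-tn} loses the low frequencies ($g_n\to0$ in $\dot H^1$ but not in $L^2$); full compactness at $s=0$ is recovered only later, through the double Duhamel argument.
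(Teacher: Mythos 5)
Your proposal is correct and follows essentially the same route as the paper: apply Theorem \ref{APS:existence} to the translates $u_c(t_n+\cdot)$, rule out $t'_n\to-\infty$ and $t'_n\to+\infty$ by smallness of the relevant free Strichartz norms against \eqref{critical:infinity Strich} (your uniform global $ST$ bound in the second case is a cosmetic variant of the paper's conclusion $u_c\equiv 0$), and then use the conserved mass to upgrade $\dot H^1$ precompactness modulo translations to $\dot H^s$ for all $s\in(0,1]$. The Littlewood--Paley interpolation step you spell out is exactly what the paper compresses into ``by the conservation of the mass, it suffices to prove precompactness in $\dot H^1$.''
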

\begin{proof} By the conservation of the mass, it suffices to prove the precompactness of
$u_c(t_n)\}$ in $\dot H^1$ for any positive time $t_1, t_2, \ldots$.
Suppose $t_n$ converges, then it is trivial from the continuity in
$t$.

Suppose $t_n \rightarrow +\infty$. Applying Theorem
\ref{APS:existence} to the sequence of solutions
$\overrightarrow{u}_c(t+t_n)$, we get another sequence $(t'_n, x'_n)
\in \R \times \R^d$ and $\varphi\in L^2$ such that
\begin{align*}
\frac{|\nabla|}{\left<\nabla\right>}\left(\overrightarrow{u}_c(t_n,x) - e^{-it'_n\Delta}\varphi(x-x'_n)\right)
\rightarrow 0 \quad \text{in}\;\; L^2.
\end{align*}
\begin{enumerate}
\item If $t'_n \rightarrow -\infty$, then we have
\begin{align*}
\big\|\left<\nabla\right>^{-1}e^{it\Delta}\overrightarrow{u}_c(t_n)\big\|_{ST(0,
+\infty)} =
\big\|\left<\nabla\right>^{-1}e^{it\Delta}\varphi\big\|_{ST(-t'_n,
+\infty)} + o_n(1) \rightarrow 0.
\end{align*}
Hence $u_c$ can solve \eqref{NLS}  for $t>t_n$ with large $n$
globally by iteration with small Strichartz norms, which contradicts
\eqref{critical:infinity Strich}.

\item If $t'_n \rightarrow +\infty$, then we have
\begin{align*}
\big\|\left<\nabla\right>^{-1}e^{it\Delta}\overrightarrow{u}_c(t_n)\big\|_{ST(-\infty,
0)} =
\big\|\left<\nabla\right>^{-1}e^{it\Delta}\varphi\big\|_{ST(-\infty,
-t'_n)} + o_n(1) \rightarrow 0
\end{align*}
Hence $u_c$ can solve \eqref{NLS} for $t<t_n$ with large $n$
with diminishing Strichartz norms, which implies $u_c=0$ by taking the
limit, which is a contradiction.
\end{enumerate}

Thus $t'_n$ is bounded, which implies that $t'_n$ is precompact, so is $u_c(t_n,
x+x'_n)$ in $\dot H^1$.
\end{proof}

As a consequence, the energy of $u_c$ stays within a fixed
radius for all positive time, modulo arbitrarily small rest (that is, the spatial scaling function of $u_c$ is $1$). More
precisely, we define the exterior energy by
\begin{align*}
E_{R,c}(u;t)=\int_{|x-c|\geq R} \Big( \big|\nabla u(t,x)\big| ^2   + \big|u(t,x)\big|^{2^*} +
\big|u(t,x)\big|^{\frac{2d+2}{d-1}} \Big) \; dx
\end{align*}
for any $R>0$ and $c\in \R^d$. Then we have
\begin{corollary} \label{compact:almost periodicity} For $d\geq 5$.
Let $u_c$ be a forward critical element. then for any $\varepsilon$,
there exist $R_0(\varepsilon)>0$ and $x(t): (0, +\infty)\rightarrow
\R^d$ such that at any time $t>0$, we have
\begin{align*}
E_{R_0, x(t)}(u_c; t) \leq \varepsilon E(u_c).
\end{align*}
\end{corollary}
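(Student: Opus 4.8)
The plan is to deduce the uniform spatial localization directly from the precompactness established in Proposition~\ref{compact:APS}, using the elementary fact that precompact families in $L^p(\R^d)$ have uniformly small tails outside large balls. Let $x(t)$ be the translation function furnished by Proposition~\ref{compact:APS}, so that the orbit $\KKK_0 := \{\, u_c(t,\cdot - x(t)) : 0<t<\infty \,\}$ is precompact in $\dot H^1(\R^d)$; in addition $\KKK_0$ is bounded in $L^2(\R^d)$ by conservation of mass. Since $\dot H^1(\R^d)\hookrightarrow L^{2^*}(\R^d)$ is a continuous embedding and continuous images of precompact sets are precompact, $\KKK_0$ is precompact in $L^{2^*}(\R^d)$ as well, and $\{\nabla u_c(t,\cdot - x(t)) : 0<t<\infty\}$ is precompact in $L^2(\R^d)$.

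First I would record the general principle: if $\mathcal A\subset L^p(\R^d)$ is precompact, then $\sup_{f\in\mathcal A}\big\|f\,\chi_{|y|\ge R}\big\|_{L^p}\to 0$ as $R\to\infty$ --- cover $\mathcal A$ by finitely many balls $B(f_i,\delta)$, pick $R_i$ with $\big\|f_i\,\chi_{|y|\ge R_i}\big\|_{L^p}<\delta$, set $R=\max_i R_i$, and conclude by the triangle inequality. Applying this to $\{\nabla u_c(t,\cdot - x(t))\}$ in $L^2$ and to $\KKK_0$ in $L^{2^*}$ produces, for every $\eta>0$, a radius $R(\eta)$ such that $\big\|\nabla u_c(t,\cdot - x(t))\,\chi_{|y|\ge R}\big\|_{L^2}^2 + \big\|u_c(t,\cdot - x(t))\,\chi_{|y|\ge R}\big\|_{L^{2^*}}^{2^*}\le\eta$ for all $t>0$. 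For the subcritical term I would not need separate precompactness in $L^{\frac{2d+2}{d-1}}$: since $2<\frac{2d+2}{d-1}<2^*$ for $d\ge5$, interpolation gives $\|g\|_{L^{(2d+2)/(d-1)}}\le\|g\|_{L^2}^{1-\theta}\|g\|_{L^{2^*}}^{\theta}$ for some $\theta\in(0,1)$, and hence
\[
\int_{|y|\ge R}\big|u_c(t,y-x(t))\big|^{\frac{2d+2}{d-1}}\,dy \le \big\|u_c(0)\big\|_{L^2}^{(1-\theta)\frac{2d+2}{d-1}}\,\big\|u_c(t,\cdot - x(t))\,\chi_{|y|\ge R}\big\|_{L^{2^*}}^{\theta\frac{2d+2}{d-1}},
\]
which is again uniformly small for large $R$, thanks to the $L^2$ bound and the $L^{2^*}$ tail estimate already obtained.

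Finally, the change of variables $z = y - x(t)$ turns $\int_{|y|\ge R}\big(|\nabla u_c(t,y-x(t))|^2 + |u_c(t,y-x(t))|^{2^*} + |u_c(t,y-x(t))|^{\frac{2d+2}{d-1}}\big)\,dy$ into $E_{R,-x(t)}(u_c;t)$, so the three estimates combine to give $E_{R(\eta),-x(t)}(u_c;t)\le C\eta$ uniformly in $t>0$; relabelling the center (replacing $x(t)$ by $-x(t)$, permissible since the statement only asserts the existence of some $x(t)$) puts this in the asserted form. Since $u_c\in\KKK^+$ is nonzero (its $ST$-norm is infinite), Lemma~\ref{free-energ-equiva} gives $E(u_c)\ge\tfrac1{2d}\big(\|\nabla u_c\|_{L^2}^2+\|u_c\|_{L^{2^*}}^{2^*}\big)>0$, so one may take $\eta$ comparable to $\varepsilon E(u_c)$ and set $R_0(\varepsilon):=R(\eta)$. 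I do not expect a real obstacle: the substance is already contained in Proposition~\ref{compact:APS}. The only points requiring care are transferring the precompactness of the orbit from $\dot H^1$ to the Lebesgue spaces that actually occur in $E_{R,c}$ --- for the subcritical power one genuinely uses the uniform mass bound rather than $\dot H^1$ alone --- and keeping track of the correct sign of the translation parameter.
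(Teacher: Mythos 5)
Your argument is correct and is exactly the standard deduction the paper has in mind: the corollary is stated as an immediate consequence of the precompactness in Proposition \ref{compact:APS}, and you supply the routine details (uniform tails of precompact sets in $L^p$, Sobolev transfer to $L^{2^*}$, interpolation with the conserved mass for the subcritical power, and positivity of $E(u_c)$ via Lemma \ref{free-energ-equiva}). No discrepancy with the paper's approach.
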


\begin{corollary}[\cite{TaoVZ:NLS:mass compact}]\label{Duhamel:Double} For $d\geq 5$.
Let $u_c$ be the critical element as shown in Theorem \ref{APS:existence}. Then for all $t\in \R$,
\begin{align*}
u(t)=& \lim_{T\rightarrow+\infty} i \int^{T}_t e^{i(t-s)\Delta }\Big(f_1(u(s))+f_2(u(s))\Big) ds\\
 =& \lim_{T\rightarrow -\infty} -i \int^{t}_T e^{i(t-s)\Delta }\Big(f_1(u(s))+f_2(u(s))\Big) ds
\end{align*}
as weak limits in $\dot H^s_x$ for any $s\in (0,1]$.
\end{corollary}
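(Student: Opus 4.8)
The plan is to read the two identities directly off Duhamel's formula, the only real content being that the free evolution of the data at time $T$ converges weakly to zero as $T\to\pm\infty$. Fix $t\in\R$ and $s\in(0,1]$. For any $T$, Duhamel's formula for $u_c$ gives
\[
u_c(t)=e^{i(t-T)\Delta}u_c(T)+i\int_t^T e^{i(t-s')\Delta}\big(f_1(u_c(s'))+f_2(u_c(s'))\big)\,ds',
\]
so it suffices to prove $e^{i(t-T)\Delta}u_c(T)\rightharpoonup 0$ in $\dot H^s_x$ as $T\to+\infty$, and symmetrically as $T\to-\infty$; the two weak limits then produce the two formulas. Since $u_c\in L^\infty_t H^1_x$ by Lemma \ref{free-energ-equiva} and the conservation laws, $\{e^{i(t-T)\Delta}u_c(T)\}_T$ is bounded in $\dot H^s$, so I would only need to test against a dense family in $\dot H^{-s}$; I would take $\phi$ Schwartz with $\widehat\phi$ supported in an annulus bounded away from the origin, which also gives $\phi\in L^1$.

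The next step is the weak-vanishing estimate itself. After translating by the spatial center $x(T)$ from Proposition \ref{compact:APS}, the pairing $\langle e^{i(t-T)\Delta}u_c(T),\phi\rangle$ becomes $\langle u_c(T,\cdot+x(T)),\,e^{i(T-t)\Delta}\phi(\cdot+x(T))\rangle$. Given a sequence $T_n\to+\infty$, I would use the precompactness of $\{u_c(\tau,\cdot-x(\tau))\}$ in $\dot H^s$ (Proposition \ref{compact:APS}, Corollary \ref{compact:almost periodicity}), together with its boundedness in $L^2$ from mass conservation, to pass to a subsequence along which $u_c(T_n,\cdot+x(T_n))$ converges strongly in $\dot H^s$, hence weakly in $L^2$, to a single $w\in\dot H^s\cap L^2$. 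The error $u_c(T_n,\cdot+x(T_n))-w$ is $o(1)$ in $\dot H^s$ and pairs against the constant $\|e^{i(T_n-t)\Delta}\phi\|_{\dot H^{-s}}=\|\phi\|_{\dot H^{-s}}$, so that part drops out; for the fixed profile $w$ I would use the dispersive bound $\|e^{i(T_n-t)\Delta}\phi\|_{L^\infty_x}\lesssim|T_n-t|^{-d/2}\|\phi\|_{L^1}$ and $L^2$-tightness of $w$ — splitting $w$ into $\mathbf{1}_{|x|\le R}w$ and $\mathbf{1}_{|x|>R}w$ and sending first $T_n\to\infty$, then $R\to\infty$ — to conclude the pairing tends to $0$. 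Since every subsequence has a further subsequence along which the pairing vanishes, the weak convergence follows; substituting back into Duhamel's formula gives both identities, the limits being in $\dot H^s_x$ for each $s\in(0,1]$.

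The step I expect to be the main obstacle is precisely the handling of the boundary term $e^{i(t-T)\Delta}u_c(T)$: one cannot just invoke $e^{i\sigma\Delta}g\rightharpoonup0$ for a fixed $g$, since the data $u_c(T)$ moves with $T$ and the center $x(T)$ may escape to infinity, so compactness is needed to reduce — along subsequences — to a single profile, and a genuine $|\,\cdot\,|^{-d/2}$ dispersive decay is needed to annihilate the pairing with that profile. A technical wrinkle inside this step is that the precompactness is only in $\dot H^s$ with $s>0$, whereas the dispersive argument wants an $L^2$ statement for the profile $w$; this gap is bridged by the extra $L^2$-boundedness supplied by mass conservation, which forces $w\in L^2$. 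Everything else — the $H^1$-bound on $u_c$, the density of the test class, the translation invariance of all the bounds under $x\mapsto x-x(T)$, and the (automatic) weak convergence of the improper integrals — should be routine.
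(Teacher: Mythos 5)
Your argument is correct and is essentially the one behind the cited result: the paper gives no proof of this corollary (it simply quotes Tao--Visan--Zhang), and the standard proof there is exactly your reduction via Duhamel to showing $e^{i(t-T)\Delta}u_c(T)\rightharpoonup 0$ in $\dot H^s$, using the almost periodicity (Proposition \ref{compact:APS}, plus mass conservation for the $L^2$ information) together with the $|T-t|^{-d/2}$ dispersive decay against a dense class of test functions. So the proposal matches the intended approach; the only caveat, inherited from the paper itself rather than introduced by you, is that the backward-in-time limit tacitly uses compactness for negative times as well, which the paper states only for the forward direction.
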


\subsection{Zero momentum of the critical element} Next we show that
the critical element can not move with any positive speed in the
sense of energy by it's Galilean invariance.

\begin{proposition}\label{momentum:zero} For $d\geq 5$.
Let $u_c$ be the critical element as shown in Theorem \ref{APS:existence}. then its total momentum, which is
a conserved quantity, vanishes:
\begin{align*}
P(u_c):=2\Im \int_{\R^d}\overline{ u_c(t,x)} \cdot \nabla u_c(t,x) \; dx
\equiv 0.
\end{align*}
\end{proposition}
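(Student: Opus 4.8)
The plan is to argue by contradiction, exploiting the Galilean invariance of \eqref{NLS} together with the minimality encoded in the definition of $E^*_C$. Suppose $P(u_c)=P_0\neq 0$; recall that $P_0$ is conserved and finite and that $0<\|u_c\|_{L^2}<\infty$ (as $u_c$ does not scatter it is nonzero, and $M(u_c)\leq C$). For $\xi\in\R^d$ set
\[
u^\xi_c(t,x):=e^{i(x\cdot\xi-t|\xi|^2)}\,u_c(t,x-2t\xi).
\]
Since $f_1(e^{i\theta}z)=e^{i\theta}f_1(z)$ and $f_2(e^{i\theta}z)=e^{i\theta}f_2(z)$, the boosted function $u^\xi_c$ is again a global strong $H^1$-solution of \eqref{NLS}. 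Because $|u^\xi_c(t,x)|=|u_c(t,x-2t\xi)|$, a change of variable in $x$ at each fixed $t$ shows that the space-time Lebesgue norms defining $ST=W_1\cap W_2$ are left invariant, so $\|u^\xi_c\|_{ST(\R)}=\|u_c\|_{ST(\R)}=\infty$; likewise $M(u^\xi_c)=M(u_c)\leq C$.

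Next I would record how $E$ and $K$ transform. Writing $\nabla u^\xi_c=e^{i(\cdots)}(i\xi u_c+\nabla u_c)(t,x-2t\xi)$, expanding $|i\xi u_c+\nabla u_c|^2$, and using $P_0=2\Im\int\overline{u_c}\nabla u_c$ together with the fact that the potential terms only see $|u_c|$, one obtains
\[
E(u^\xi_c)=E(u_c)+\tfrac12|\xi|^2\|u_c\|_{L^2}^2+\tfrac12\,\xi\cdot P_0,\qquad K(u^\xi_c)=K(u_c)+2|\xi|^2\|u_c\|_{L^2}^2+2\,\xi\cdot P_0.
\]
Now take $\xi=-\varepsilon P_0$ with $\varepsilon>0$. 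Then $E(u^\xi_c)=E(u_c)-\tfrac12\varepsilon|P_0|^2\bigl(1-\varepsilon\|u_c\|_{L^2}^2\bigr)$ and $K(u^\xi_c)=K(u_c)-2\varepsilon|P_0|^2\bigl(1-\varepsilon\|u_c\|_{L^2}^2\bigr)$. Choosing $\varepsilon$ so small that $\varepsilon\|u_c\|_{L^2}^2<1$ and $2\varepsilon|P_0|^2<K(u_c)$ — the latter possible since $K(u_c)>0$ by Theorem \ref{APS:existence} — we get $E(u^\xi_c)<E(u_c)=E^*_C<m$ and $K(u^\xi_c)>0$, hence $u^\xi_c\in\KKK^+$. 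Thus $u^\xi_c$ is a solution in $\KKK^+$ with $M(u^\xi_c)\leq C$ and $E(u^\xi_c)<E^*_C$, so by the definition of $E^*_C$ it must have finite $ST$-norm; this contradicts $\|u^\xi_c\|_{ST(\R)}=\infty$, and therefore $P_0=0$.

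The delicate point — and essentially the only real obstacle — is that the Galilean boost \emph{decreases} $K$, so it could in principle eject $u_c$ from the admissible class $\KKK^+$. This is why it is crucial that $K(u_c)$ be \emph{strictly} positive, which follows from Proposition \ref{threshold-energy}: it allows a sufficiently small boost in the direction $-P_0$ to strictly lower the energy while keeping $K$ positive. The remaining ingredients — the Galilean covariance of \eqref{NLS} for the combined nonlinearity, persistence of the $H^1$ class under the boost, and invariance of the mass and of the $W_1\cap W_2$ norms — are routine.
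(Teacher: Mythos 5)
Your proof is correct and follows essentially the same route as the paper: a Galilean boost of the critical element, the transformation formulas for $E$ and $K$ under the boost, a small boost in the direction $-P(u_c)$ chosen so that the energy strictly decreases while $K$ stays nonnegative (using $K(u_c)>0$), and a contradiction with the minimality in the definition of $E^*_C$. The only differences are cosmetic (your explicit choice $\xi=-\varepsilon P_0$ versus the paper's condition $-\tfrac12 K(u)\leq |\xi_0|^2M(u)+\xi_0\cdot P(u)<0$), so there is nothing to add.
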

\begin{proof} We drop the subscript $c$ for simplicity.
Note that the momentum $P(u)$ and the mass $M(u)$ are finite and
conserved. Moreover, $M(u) \not = 0$, otherwise $u$ would be
identically zero and not a critical element.

Let $\widetilde{u}$ be the Galilean boost of $u$ by $\xi_0$, which
is determined later.
\begin{align*}
\widetilde{u}(t,x):=e^{ix\cdot \xi_0}e^{-it|\xi_0|^2}u(t,x-2\xi_0
t),
\end{align*}
then we have
\begin{align*}
\big\|\nabla \widetilde{u}(t)\big\|^2_{L^2} = \big\|\nabla
u(t)\big\|^2_{L^2} + |\xi_0|^2M(u) + \xi_0 \cdot P(u).
\end{align*}
Equivalently, we have $M(\widetilde{u}) = M(u)$ and
\begin{align*}
E(\widetilde{u}) = E(u) + \frac12 |\xi_0|^2M(u) + \frac12 \xi_0
\cdot P(u) , \quad K(\widetilde{u}) = K(u) + 2 |\xi_0|^2M(u) + 2
\xi_0 \cdot P(u).
 \end{align*}

If $P(u) \not =0$, choosing $\xi_0 \in \R^d$ such that
$$ -\frac12 K(u) \leq  |\xi_0|^2M(u) +
\xi_0 \cdot P(u) < 0, $$ then we can find another critical element
$\widetilde{u}$ in $\KKK^+$ with
\begin{align*}
M(\widetilde{u}) = M(u) \leq C ,\quad E(\widetilde{u})<E(u)=E^*_C, \quad
\big\|\widetilde{u}\big\|_{ST(\R)}=+\infty,
\end{align*}
which is in contradiction with the definition of $E^*_C$. Hence
$P(u)\equiv 0$.
\end{proof}

\subsection{Negative regularity} In this subsection, we show that
\begin{proposition}For $d\geq 5$. Let $u_c$ be the critical element as shown in Theorem \ref{APS:existence}, then $u_c\in L^{\infty}\dot H^{-\epsilon}$ for some $\epsilon>0$.
\end{proposition}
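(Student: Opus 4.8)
The plan is to run the double Duhamel trick of \cite{TaoVZ:NLS:mass compact}, upgrading the a priori regularity $u_c\in L^\infty_tH^1_x$ (Lemma~\ref{free-energ-equiva} together with mass conservation) to $u_c\in L^\infty_t\dot H^{-\epsilon}_x$. The frequencies $\gtrsim 1$ are already under control by mass conservation, so it suffices to prove a quantitative decay of the low frequencies: writing $P_N$ for the Littlewood--Paley projection to $|\xi|\sim N$, the goal is to produce $\alpha_0>0$ with
\[
\big\|P_Nu_c\big\|_{L^\infty_tL^2_x}\ \lesssim\ N^{\alpha_0}\qquad\text{for all }N\le 1,
\]
since then squaring and summing against $N^{-2\epsilon}$ gives $u_c\in L^\infty_t\dot H^{-\epsilon}_x$ for every $\epsilon<\alpha_0$. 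Everything below is translation invariant, so the spatial center $x(t)$ is irrelevant at this stage.

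Two ingredients feed the argument. The first is a low-frequency bound for the nonlinearity: writing $F(u)=f_1(u)+f_2(u)=-|u|^{4/(d-2)}u+|u|^{4/(d-1)}u$ and Littlewood--Paley decomposing each factor, the output $P_{\le N}F(u)$ is governed by a dichotomy — either one of the factors is itself at frequency $\lesssim N$, which produces a feedback term involving $\|P_{\lesssim N}u_c\|$, or the high-frequency factors sit at comparable scales $\gg N$, in which case Bernstein applied to $P_{\le N}$ produces a positive power of $N$. Combining this with Sobolev embedding, the exotic Strichartz estimates of Lemma~\ref{strichartz:exotic}, the bound $\sup_t\|u_c(t)\|_{H^1}<\infty$, and — crucially — the almost periodicity of $u_c$ (Proposition~\ref{compact:APS}, Corollary~\ref{compact:almost periodicity}), which guarantees that the relevant spacetime norms of $u_c$ are finite and uniformly controlled on time intervals of bounded length, one reaches a bound of the schematic shape
\[
\big\|P_{\le N}F(u_c)\big\|_{\dot N^0(I\times\R^d)}\ \lesssim_{|I|}\ N^{\alpha}\ +\ (\text{small feedback involving }\{\|P_{N'}u_c\|_{L^\infty_tL^2_x}\}_{N'\lesssim N}),
\]
with $\alpha=\alpha(d)>0$. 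The second ingredient is Corollary~\ref{Duhamel:Double}: pairing the forward Duhamel representation of $P_Nu_c(t)$ with the backward one yields, for fixed $t$ and $N\le1$,
\[
\big\|P_Nu_c(t)\big\|_{L^2_x}^2=\int_t^{\infty}\!\!\int_{-\infty}^{t}\big\langle e^{i(s'-s)\Delta}P_NF(u_c(s)),\,P_NF(u_c(s'))\big\rangle\,ds'\,ds .
\]

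Next I split this double integral according to whether $|s-s'|\le N^{-2}$ or $|s-s'|>N^{-2}$. On the first region, $s$ and $s'$ each range over intervals of length $O(N^{-2})$, so a dual Strichartz estimate together with the nonlinear bound above controls that contribution. On the second region I invoke the dispersive decay $\|e^{i\tau\Delta}P_Ng\|_{L^\infty_x}\lesssim|\tau|^{-d/2}\|P_Ng\|_{L^1_x}$ together with Bernstein at frequency $N$, so that $\big|\langle e^{i(s'-s)\Delta}P_NF(s),P_NF(s')\rangle\big|$ decays like a power of $N^{-2}/|s-s'|$; because $d\ge5$ this power is large enough that the kernel is integrable in the time lag, and summing the resulting geometric series over dyadic lags produces again a positive power of $N$. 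Collecting the two regions gives a recursive inequality of the form $\|P_Nu_c\|_{L^\infty_tL^2_x}\lesssim N^{\alpha}+\sum_{N'\lesssim N}c(N,N')\,\|P_{N'}u_c\|_{L^\infty_tL^2_x}$ with summable, gaining weights $c(N,N')$; a Bourgain-style induction on frequency then promotes the trivial bound $\|P_Nu_c\|_{L^\infty_tL^2_x}\lesssim\|u_c\|_{L^2_x}\lesssim1$ to $\|P_Nu_c\|_{L^\infty_tL^2_x}\lesssim N^{\alpha_0}$ for some $0<\alpha_0\le\alpha$, which completes the proof.

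The main obstacle is the energy-critical focusing term $f_1(u)=-|u|^{4/(d-2)}u$. Its natural spacetime norm is the scaling-critical space $W_1$, which is \emph{not} globally finite along $u_c$ (indeed $\|u_c\|_{ST(\R)}=\infty$), so every estimate must be localized to bounded time intervals and one must track carefully that the Bernstein gain $N^{\alpha}$ strictly beats the loss coming from the length of those intervals; moreover, for $d\ge 7$ the exponent $4/(d-2)$ is below $1$, so $f_1$ is merely Hölder continuous, which forces the paraproduct and fractional-chain-rule steps to pass through the exotic Strichartz estimates of Lemma~\ref{strichartz:exotic}. Getting this balance to close is precisely why the argument works only in dimensions $d\ge5$; the remaining steps are routine bookkeeping.
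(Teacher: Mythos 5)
Your overall instinct (double Duhamel plus Bernstein and dispersive decay at low frequency) is the right one, but the scheme you propose has two genuine gaps, and it misses the observation that makes the paper's proof short. First, in your ``far'' region $|s-s'|>N^{-2}$ you invoke $\|e^{i\tau\Delta}P_Ng\|_{L^\infty_x}\lesssim|\tau|^{-d/2}\|P_Ng\|_{L^1_x}$; but no $L^1_x$ bound on $P_N\big(f_1(u_c)+f_2(u_c)\big)$ is available from $u_c\in L^\infty_tH^1_x$, and Bernstein only raises, never lowers, the Lebesgue exponent, so this step as written fails. The correct substitute is the $L^r_x\to L^{r'}_x$ dispersive estimate applied to $F(u_c)\in L^\infty_tL^r_x$. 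Second, the heart of your induction --- that the Bernstein gain $N^{\alpha}$ from the high-high pieces of $P_{\le N}F(u_c)$ beats the growth of the localized Strichartz norms of $u_c$ over time intervals of length $N^{-2}$ (recall $\|u_c\|_{ST}$ is only bounded per unit interval, so over $[t,t+N^{-2}]$ it grows like a negative power of $N$), and that the feedback coefficients $c(N,N')$ are summable with a small enough constant to run the Bourgain-style iteration --- is precisely the nontrivial quantitative content, and you assert it (``routine bookkeeping'') rather than verify it; in the cited sources this balance closes only with substantial additional structure, so as it stands the recursion is not shown to close.

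The paper avoids all of this because the critical element has finite mass as well as finite energy. From $u_c\in L^\infty_t\big(L^2_x\cap L^{2d/(d-2)}_x\big)$ and H\"older one gets $f_1(u_c)+f_2(u_c)\in L^\infty_tL^r_x$ for every $r\in\big[\tfrac{2(d-1)}{d+3},\tfrac{2d}{d+4}\big]$; choosing $r$ strictly below $\tfrac{2d}{d+4}$, the pairing of the forward and backward Duhamel representations (Corollary \ref{Duhamel:Double}) is bounded for each dyadic $N$ by $\min\big(|t-\tau|^{-1},N^2\big)^{d(\frac1r-\frac12)}\,\|F(u_c)\|^2_{L^\infty_tL^r_x}$, using the $L^r\to L^{r'}$ dispersive estimate on one side and Bernstein on the other; since $d(\tfrac1r-\tfrac12)>2$, the double time integral over the quadrant converges and yields $\|P_Nu_c\|_{L^\infty_tL^2_x}\lesssim N^{s_0}$ with $s_0=\tfrac dr-\tfrac{d+4}2>0$, i.e.\ $u_c\in L^\infty_t\dot B^{-s_0}_{2,\infty}$, hence $u_c\in L^\infty_t\dot H^{-\epsilon}$ by interpolation with the $H^1$ bound. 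No frequency induction, no paraproduct/feedback decomposition, no exotic Strichartz estimates and no interval-length bookkeeping are needed: the single $L^\infty_tL^r_x$ bound on the nonlinearity, made possible by the mass bound, does all the work, and this is exactly the observation your proposal does not exploit.
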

\begin{proof}We drop the subscript $c$.  Since we have $u \in L^{\infty}_tH^1$, then we have $u \in L^{\infty}_tL^p_x$, for any $p\in [2, \frac{2d}{d-2}]$.
\begin{align*}
\big\|f_1(u)+f_2(u)\big\|_{L^{\infty}L^r_x}\lesssim \big\|u\big\|_{L^{\infty}_tL^2_x} \big\| u \big\|^{\frac{4}{d-2}}_{L^{\infty}_tL^{p_1}_x}
+ \big\|u\big\|_{L^{\infty}_tL^2_x} \big\| u \big\|^{\frac{4}{d-1}}_{L^{\infty}_tL^{p_2}_x}
\end{align*}
where $2\leq p_1, p_2\leq 2d/(d-2)$ and
\begin{align*}
\frac1r = \frac{1}{2} + \frac{4}{d-2} \times \frac{1}{p_1} = \frac{1}{2} + \frac{4}{d-1} \times \frac{1}{p_2}.
\end{align*}
Therefore for any $r \in [\frac{2(d-1)}{d+3}, \frac{2d}{d+4}]$, we have
\begin{align}\label{Ine:regularity}
f_1(u)+f_2(u) \in L^{\infty}_tL^r_x.
\end{align}

Now from \eqref{Ine:regularity}, we claim that
\begin{align}\label{regularity}
u \in L^{\infty}_t \dot B^{-s_0}_{2, \infty}, \quad \text{for any}\;\; s_0 = \frac{d}{r}-\frac{d+4}{2} \in [0, \frac{2}{d-1}],
\end{align}
which implies the negative regularity of $u$ by interpolation. Now we shows \eqref{regularity}. By the time translation symmetry, it suffices to show that
$u(0) \in \dot B^{-s_0}_{2, \infty}$. In fact, from Corollary \ref{Duhamel:Double},  we have
\begin{align*}
& \big\|u_N(0)\big\|^2_{L^2} \\
=&   \left< i \int^{\infty}_0 e^{-it\Delta}P_N \Big(f_1(u(t))+f_2(u(t))\Big)\; dt, \; -   i \int^0_{-\infty} e^{-i\tau\Delta}P_N \Big(f_1(u(\tau))+f_2(u(\tau)\Big)\; d\tau\right> \\
\leq & \int^{\infty}_0 \int^0_{-\infty} \left| \left<   P_N \Big(f_1(u(t))+f_2(u(t))\Big), \; e^{i(t-\tau)\Delta}P_N \Big(f_1(u(\tau))+f_2(u(\tau)\Big)  \right> \right| \; dt d\tau.
\end{align*}
On one hand, by the dispersive estimate of $e^{it\Delta}$, we have
\begin{align*}
& \left| \left<   P_N \Big(f_1(u(t))+f_2(u(t))\Big), \; e^{i(t-\tau)\Delta}P_N \Big(f_1(u(\tau))+f_2(u(\tau)\Big)  \right> \right|\\
\lesssim & \big\| P_N \Big(f_1(u(t))+f_2(u(t))\Big) \big\|_{L^r_x} \big\| e^{i(t-\tau)\Delta}P_N \Big(f_1(u(\tau))+f_2(u(\tau)\Big)  \big\|_{L^{r'}_x}\\
\lesssim & \big|t-\tau\big|^{d\big(\frac12 - \frac1r\big)}  \big\|  f_1(u(t))+f_2(u(t))  \big\|_{L^r_x}  \big\|  f_1(u(\tau))+f_2(u(\tau)) \big\|_{L^r_x}.
\end{align*}
On the other hand, by Bernstein's inequality, we have
\begin{align*}
& \left| \left<   P_N \Big(f_1(u(t))+f_2(u(t))\Big), \; e^{i(t-\tau)\Delta}P_N \Big(f_1(u(\tau))+f_2(u(\tau)\Big)  \right> \right|\\
\lesssim & \big\| P_N \Big(f_1(u(t))+f_2(u(t))\Big) \big\|_{L^2_x} \big\|  P_N \Big(f_1(u(\tau))+f_2(u(\tau)\Big)  \big\|_{L^{2}_x}\\
\lesssim & N^{2d\big(\frac1r -\frac12\big)}  \big\|  f_1(u(t))+f_2(u(t))  \big\|_{L^r_x}  \big\| f_1(u(\tau))+f_2(u(\tau))  \big\|_{L^r_x}.
\end{align*}
Therefore, for $d\geq 5$, we have
\begin{align*}
  \big\|u_N(0)\big\|^2_{L^2} \lesssim &  \big\| f_1(u)+f_2(u)  \big\|^2_{L^{\infty}_tL^r_x}  \times
  \int^{\infty}_0 \int^0_{-\infty} \min\left(|t-\tau|^{-1}, N^2\right)^{d\left(\frac1r-\frac12\right)} \; dtd\tau \\
\lesssim & N^{2s_0} \big\| f_1(u)+f_2(u)  \big\|^2_{L^{\infty}_tL^r_x},
\end{align*}
where $s_0=\frac{d}{r}-\frac{d+4}{2}.$ This implies \eqref{regularity}.
\end{proof}

\begin{corollary}\label{compact:mass} For $d\geq 5$.
Let $u_c$ be the critical element as shown in Theorem \ref{APS:existence}, then for every $\eta>0$ there exists $C(\eta)>0$ such that
\begin{align*}
\sup_{t\geq 0}\int_{|x-x(t)|\geq C(\eta)} \big|u(t,x ) \big|^2\; dx\lesssim \eta.
\end{align*}
\end{corollary}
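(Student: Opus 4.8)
The plan is to upgrade the precompactness of the (spatially recentered) orbit of $u_c$ from $\dot H^s$ with $s\in(0,1]$ to $L^2=\dot H^0$, and then to invoke the elementary fact that a relatively compact subset of $L^2(\R^d)$ is uniformly tight. Throughout I drop the subscript $c$ and write $u=u_c$, $\Gamma:=\{u(t,\cdot-x(t)):t>0\}\subset L^2(\R^d)$.

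First I would collect the two inputs already at hand. By Proposition \ref{compact:APS} the set $\Gamma$ is precompact in $\dot H^s$ for every $s\in(0,1]$; by the preceding proposition $u\in L^{\infty}_t\dot H^{-\epsilon}_x$ for some $\epsilon>0$, so $\Gamma$ is bounded in $\dot H^{-\epsilon}$, say $\sup_{t>0}\|u(t)\|_{\dot H^{-\epsilon}}=M_0<\infty$. Fix $s\in(0,1]$ (for instance $s=1$) and set $\theta=\epsilon/(\epsilon+s)\in(0,1)$, so that $(1-\theta)(-\epsilon)+\theta s=0$; by Plancherel and H\"older in frequency this yields the interpolation inequality $\|f\|_{L^2}\leq\|f\|_{\dot H^{-\epsilon}}^{1-\theta}\|f\|_{\dot H^{s}}^{\theta}$. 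Now given any sequence $\{f_n\}\subset\Gamma$, precompactness in $\dot H^s$ produces a subsequence $\{f_{n_k}\}$ that is Cauchy in $\dot H^s$; applying the interpolation inequality to the differences $f_{n_k}-f_{n_l}$ together with the uniform bound $\|f_{n_k}-f_{n_l}\|_{\dot H^{-\epsilon}}\leq 2M_0$ shows $\{f_{n_k}\}$ is Cauchy, hence convergent, in $L^2$. Therefore $\Gamma$ is precompact in $L^2(\R^d)$.

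Next I would extract uniform tightness. Given $\eta>0$, cover the relatively compact set $\Gamma$ by finitely many $L^2$-balls of radius $\sqrt{\eta}/2$ centered at $g_1,\dots,g_N\in L^2(\R^d)$, and choose $C(\eta)>0$ so large that $\int_{|x|\geq C(\eta)}|g_j(x)|^2\,dx\leq\eta/4$ for every $j$. For any $t>0$ the function $f=u(t,\cdot-x(t))$ lies within $\sqrt{\eta}/2$ of some $g_j$ in $L^2$, so by the triangle inequality $\big(\int_{|x|\geq C(\eta)}|f|^2\,dx\big)^{1/2}\leq\sqrt{\eta}/2+\sqrt{\eta}/2=\sqrt{\eta}$. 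Undoing the translation $x\mapsto x-x(t)$ gives $\int_{|x-x(t)|\geq C(\eta)}|u(t,x)|^2\,dx\leq\eta$ for all $t>0$, which is the assertion.

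Since both ingredients — the $\dot H^s$-precompactness of the orbit for $s\in(0,1]$ and the negative regularity $u\in L^{\infty}_t\dot H^{-\epsilon}_x$ — are already established, there is no genuine obstacle remaining; the point that did the real work was the negative regularity of the preceding proposition, which alone bridges the gap between the half-open range $(0,1]$ in Proposition \ref{compact:APS} and the endpoint $s=0$. An alternative, self-contained route would be to rerun the double Duhamel computation from the negative-regularity proof against the $L^2$ pairing to bound $\|P_N u(t)\|_{L^2}$ uniformly and then sum the low frequencies, but this is superfluous given the interpolation argument above.
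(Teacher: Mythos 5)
Your argument is correct, and it rests on exactly the two ingredients the paper has in hand at this point: the negative regularity $u_c\in L^\infty_t\dot H^{-\epsilon}_x$ and the precompactness modulo translations in $\dot H^s$, $s\in(0,1]$, from Proposition \ref{compact:APS}. The paper itself gives no details, deferring to Lemma 8.2 of Killip--Visan, whose proof combines the same two inputs but in a quantitative way: one splits $u$ into low and high frequencies, controls $\|u_{\le N}\|_{L^2}\lesssim N^{\epsilon}\|u\|_{\dot H^{-\epsilon}}$ by Bernstein, and controls the high frequencies outside a large ball by $N^{-1}$ times the exterior $\dot H^1$-norm, which is small by almost periodicity. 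Your route is softer: the interpolation inequality $\|f\|_{L^2}\le\|f\|_{\dot H^{-\epsilon}}^{1-\theta}\|f\|_{\dot H^{s}}^{\theta}$ applied to differences upgrades $\dot H^s$-precompactness plus $\dot H^{-\epsilon}$-boundedness (both translation invariant) to $L^2$-precompactness of the recentered orbit, and then uniform tightness follows from a finite $\sqrt{\eta}/2$-net. This is self-contained and arguably cleaner; what the Killip--Visan style argument buys instead is an explicit description of how $C(\eta)$ depends on the compactness modulus, which is irrelevant here. Two harmless loose ends you may as well note: the orbit lies in $L^2$ by mass conservation (needed to even speak of Cauchy sequences in $L^2$), and the single time $t=0$ in the supremum is trivial since $u(0)\in L^2$ is one fixed function.
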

\begin{proof}
The proof is the same as Lemma 8.2 in \cite{KiV:en-NLS:high dim}.
\end{proof}

\subsection{Control of the spatial center function of the critical
element}

Now we will use the virial argument to control the spatial center
function of the critical element by acknowledge of the zero momentum and the compactness in $H^1$
of the critical element.

\begin{proposition}\label{control:center function}  For $d\geq 5$.
Let $u_c$ be the critical element as shown in Theorem \ref{APS:existence}. Then
\begin{align*}
|x(t)|=o(t), \quad \text{as}\;\; t\rightarrow +\infty.
\end{align*}
\end{proposition}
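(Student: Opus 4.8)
The plan is to run a truncated center-of-mass (virial) argument, exploiting the three structural facts already established for $u_c$: the vanishing momentum $P(u_c)\equiv 0$ (Proposition \ref{momentum:zero}), the concentration of $L^2$-mass around $x(t)$ (Corollary \ref{compact:mass}), and the uniform bound $\sup_t\|\nabla u_c(t)\|_{L^2}^2\le 2dm$ (which follows from $E(u_c)<m$, $K(u_c(t))\ge 0$ and Lemma \ref{free-energ-equiva}). Write $M_0=\|u_c(t)\|_{L^2}^2=2M(u_c)>0$. For $R>0$ let $\phi_R(x)=\phi(x/R)$ with $\phi\in C_0^\infty(\R^d)$, $\phi\equiv 1$ on $\{|x|\le 1\}$ and $\mathrm{supp}\,\phi\subset\{|x|\le 2\}$, and set $X_R(t)=\int_{\R^d}\phi_R(x)\,x\,|u_c(t,x)|^2\,dx$, a smooth compactly supported weight to which Lemma \ref{L:virial} applies. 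First I would record the differentiated identity: using the mass-current continuity equation $\partial_t|u_c|^2=-2\,\nabla\!\cdot\Im(\overline{u_c}\nabla u_c)$ together with $\int_{\R^d}\Im(\overline{u_c}\nabla u_c)\,dx=\tfrac12 P(u_c)=0$, one gets $|\partial_t X_R(t)|\lesssim \int_{|x|\ge R}|u_c(t)|\,|\nabla u_c(t)|\,dx$, the point being that the $O(1)$ piece of $\nabla(\phi_R(x)x)$ reconstructs the (vanishing) momentum and only boundary terms supported in $\{|x|\ge R\}$ survive.

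Next I would fix $\eta>0$ small and let $R_0=R_0(\eta)$ be the radius from Corollary \ref{compact:mass}, so that $\sup_t\int_{|x-x(t)|\ge R_0}|u_c(t)|^2\le\eta$; combined with conservation of mass this also gives $\int_{|x-x(t)|\le R_0}|u_c(t)|^2\ge M_0-\eta$. From these two facts and the elementary splitting $x=x(t)+(x-x(t))$ inside the integrand of $X_R$, I would derive two comparison estimates, valid whenever $10R_0\le |x(t)|\le R/2$ and $R\le 5|x(t)|$: first $|\partial_t X_R(t)|\le C\sqrt\eta$ (using $\{|x|\ge R\}\subset\{|x-x(t)|\ge R/2\}\subset\{|x-x(t)|\ge R_0\}$ and $\sup_t\|\nabla u_c(t)\|_{L^2}\lesssim 1$); and second $\tfrac{M_0}{2}|x(t)|\le |X_R(t)|\le 2M_0|x(t)|$, where the truncation errors are of size $O\!\big(R_0M_0+R\eta M_0\big)=O(\eta |x(t)|M_0)$ and are therefore absorbed once $\eta$ is small.

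With these in hand I would argue as follows. By spatial translation assume $x(0)=0$. A short weak-convergence argument (using $u_c\in C_t L^2$, constancy of $\|u_c(t)\|_{L^2}$, and precompactness of $\{u_c(t,\cdot-x(t))\}$ in $L^2$) shows $S_T:=\sup_{t\in[0,T]}|x(t)|<\infty$ for every $T$. Fix $T$ and take $R=2S_T+2R_0+1$, so that $|x(t)|\le R/2$ for all $t\in[0,T]$ and $R\le 5S_T$. Integrating the bound on $\partial_t X_R$ over $[0,T']$ for any $T'\le T$ gives $|X_R(T')|\le |X_R(0)|+C\sqrt\eta\,T'$, while $|X_R(0)|\le R_0M_0+2R\eta M_0$ from the mass concentration at the origin. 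Inserting the lower bound $|X_R(T')|\ge\tfrac{M_0}{2}|x(T')|$ for those $T'$ with $|x(T')|\ge 10R_0$ (the others being trivially bounded), taking the supremum over $T'\in[0,T]$, and recalling $R\le 5S_T$, I obtain a self-improving inequality of the form $\tfrac{M_0}{2}S_T\le C(\eta)+C\eta M_0 S_T+C\sqrt\eta\,T$; choosing $\eta$ small enough to absorb the middle term yields $S_T\le C'(\eta)+C''\sqrt\eta\,T$ for all $T$. Hence $\limsup_{T\to\infty}|x(T)|/T\le\limsup_{T\to\infty}S_T/T\le C''\sqrt\eta$, and letting $\eta\to 0$ gives $|x(t)|=o(t)$ as $t\to+\infty$.

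I expect the main obstacle to be the apparent circularity: the natural truncation scale $R$ must exceed $\sup_{t\in[0,T]}|x(t)|$, which is exactly the quantity we are bounding, and the error in identifying $X_R(t)$ with $M_0 x(t)$ is linear in $R$. The resolution—and the one genuinely delicate point—is that this error carries the small constant $\eta$ from the mass-concentration estimate, so once $R$ is comparable to $\sup|x|$ the error is a small multiple of the main term and can be absorbed, yielding the self-improving bound. Everything else (the continuity equation, the integral splittings, and the weak-convergence argument for local finiteness of $S_T$) is routine.
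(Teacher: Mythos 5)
Your proposal is correct and follows essentially the same route as the paper: a truncated first-moment functional $X_R$, the vanishing momentum killing the main term of $\partial_t X_R$, mass/energy concentration around $x(t)$ making the surviving terms small (your $O(\sqrt\eta)$ via Cauchy--Schwarz plus the uniform gradient bound is just as good as the paper's $O(\eta)$), and a cutoff radius tied to $\sup_{t\in[0,T]}|x(t)|$ so that the errors linear in $R$ carry the factor $\eta$ and get absorbed. The only real difference is presentational: you derive a direct self-improving bound $S_T\le C(\eta)+C\sqrt\eta\,T$ and let $\eta\to0$, while the paper argues by contradiction, taking $T_n$ as the first time $|x(t)|$ reaches $|x(t_n)|$ and $R_n=C(\eta)+\sup_{t\in[0,T_n]}|x(t)|$ — the same absorption mechanism.
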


\begin{proof}
We argue by contradiction. Suppose that there exist $\delta>0$ and a
sequence $t_n\rightarrow +\infty$ such that
\begin{align*}
|x(t_n)|>\delta t_n \quad \text{for all }\;\; n\geq 1.
\end{align*}

Let $\eta>0$ be a small constant to be chosen later. By compactness
and Corollary \ref{compact:almost periodicity} and Corollary \ref{compact:mass}, there exist $x(t)$ and $C(\eta)$ such that for any $t\geq 0$
\begin{align}\label{compact:apply}
\int_{|x-x(t)|>C(\eta)} \left(|\nabla u (t,x)|^2 +
|u(t,x)|^2\right) \leq \eta.
\end{align}

Define
\begin{align*}
T_n:=\inf \Big\{t \in [0, t_n]\;\big|\; |x(t)|=|x(t_n)|\Big\}\leq t_n, \quad
R_n:=C(\eta)+\sup_{t\in [0,T_n]}|x(t)|.
\end{align*}

Now let $\phi$ be a smooth, radial function satisfying $0\leq \phi \leq 1$, $\phi(x)=1$ for $|x|\leq 1$, and $\phi(x)=0$ for $|x|\geq 2$.
Define the truncation ``position'' as following
\begin{align*}
X_R(t)=\int_{\R^d} x \phi\left(\frac{|x|}{R}\right) \cdot |u(t,x)|^2\; dx,
\end{align*}
then we have
\begin{align*}
\Big|X_{R_n}(0)\Big| \leq  & \left| \int_{|x|\leq C(\eta)}x
\phi\left(\frac{|x|}{R_n}\right) |u(0,x)|^2\; dx \right| + \left|
\int_{|x)|\geq C(\eta)}x \phi\left(\frac{|x|}{R_n}\right)
|u(0,x)|^2\; dx \right|\\
\leq & C(\eta) M(u) + 2\eta R_n,
\end{align*}
and
\begin{align*}
\Big|X_{R_n}(T_n)\Big| \geq  & \big|x(T_n)\big|\cdot M(u) - \big|x(T_n)\big|\cdot \left|\int_{\R^d}
\left(1-\phi\left(\frac{|x|}{R_n}\right)  \right) |u(T_n, x)|^2 \;
dx \right| \\
& - \left| \int_{|x-x(T_n)|\leq C(\eta)}\left(x-x(T_n)\right)
\phi\left(\frac{|x|}{R_n}\right) |u(T_n,x)|^2\; dx \right| \\
& - \left| \int_{|x-x(T_n))|\geq C(\eta)}\left(x-x(T_n)\right)
\phi\left(\frac{|x|}{R_n}\right)
|u(T_n,x)|^2\; dx \right|\\
\geq & \big|x(T_n)\big| \cdot M(u) - \big|x(T_n)\big| \cdot \eta - C(\eta)M(u) - (R_n +
|x(T_n)|)\cdot \eta\\
\geq & \big|x(T_n)\big|\cdot  \left(M(u)-\eta\right) - C(\eta)\cdot (M(u)+\eta).
\end{align*}
Thus, taking $\eta>0$ sufficiently small, we have
\begin{align}\label{position diff:lower bound}
\Big|X_{R_n}(T_n) - X_{R_n}(0)\Big| \geq  \frac{M(u)}{2}  |x(T_n)|  -  2 M(u) \cdot C(\eta).
\end{align}

On the other hand, we have
\begin{align*}
\partial_t X_{R_n}(t) = & 2 \Im \int_{\R^d}
\phi\left(\frac{|x|}{R_n}\right) \nabla u(t,x) \cdot
\overline{u(t,x)} \; dx \\
& + 2 \Im \int_{\R^d} \frac{x}{|x|R_n}
\phi'\left(\frac{|x|}{R_n}\right) x \cdot \nabla u(t,x)
\overline{u(t,x)} \; dx,
\end{align*}
which together with Proposition \ref{momentum:zero} and Corollary
\ref{compact:almost periodicity} implies that for any $t\in [0,
T_n]$
\begin{align*}
\Big|\partial_t X_{R_n}(t)\Big| \leq & \left| 2 \Im \int_{\R^d}
\left(1- \phi\left(\frac{|x|}{R_n}\right)\right) \nabla u(t,x) \cdot
\overline{u(t,x)} \; dx \right| \\
& + \left| 2 \Im \int_{\R^d} \frac{x}{|x|R_n}
\phi'\left(\frac{|x|}{R_n}\right) x \cdot \nabla u(t,x)
\overline{u(t,x)} \; dx \right| \leq C \eta.
\end{align*}

Hence, we have
\begin{align*}
 \frac{M(u)}{2} \cdot \delta T_n  - 2 M(u)\cdot C(\eta) \leq \frac{M(u)}{2} \cdot \delta t_n  - 2 M(u)\cdot C(\eta) < \Big|x(T_n)\Big|  - C(\eta)
\leq C \eta \cdot T_n.
\end{align*}
Taking $\eta$ sufficiently small such that $  C \eta \leq \frac{M(u)}{4} \cdot \delta $, we obtain a contradiction with the fact $T_n \rightarrow +\infty$.
\end{proof}

\subsection{Death of the critical element}
We are in a position to preclude the soliton-like solution by
a truncated Virial identity.
%

\begin{theorem}For $d\geq 5$. The critical element $u_c$ of \eqref{NLS} cannot be a soliton in the sense
of Theorem \ref{APS:existence}.
\end{theorem}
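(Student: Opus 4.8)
The plan is to run the standard rigidity argument via a truncated virial identity: assuming a forward critical element $u_c$ exists as in Theorem \ref{APS:existence}, I will derive a contradiction using the precompactness of the orbit in $H^1$ (Proposition \ref{compact:APS} and Corollary \ref{compact:almost periodicity}), the $L^2$-tightness (Corollary \ref{compact:mass}), the uniform $H^1$-bound (Lemma \ref{free-energ-equiva}), and the control $|x(t)|=o(t)$ (Proposition \ref{control:center function}). Let $\phi$ be the cutoff of Section \ref{S:blow up} ($\phi$ radial, smooth, $\phi(r)=r^2$ for $r\le 1$, constant for $r\ge 3$, $\partial_r^2\phi\le 2$), put $\phi_R(x)=R^2\phi(|x|/R)$, $V_R(t)=\int_{\R^d}\phi_R(x)|u_c(t,x)|^2\,dx$, and consider the Morawetz quantity $Z_R(t):=\partial_t V_R(t)=-2\,\mathrm{Im}\int_{\R^d}\nabla\phi_R\cdot\nabla\overline{u_c}\,u_c\,dx$. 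Since $\|\nabla\phi_R\|_{L^\infty}\lesssim R$, mass conservation and $\sup_t\|\nabla u_c(t)\|_{L^2}<\infty$ give $|Z_R(t)|\le C_2 R$ for all $t$, with $C_2$ depending only on $u_c$. By Lemma \ref{L:virial}, and using that on $\{|x|\le R\}$ one has $\nabla^2\phi_R=2I$, $\Delta\phi_R=2d$, $\Delta^2\phi_R=0$, a direct computation gives $\partial_t Z_R(t)=\partial_t^2 V_R(t)=4K(u_c(t))+\mathrm{Err}_R(t)$, where $|\mathrm{Err}_R(t)|\lesssim\int_{|x|\ge R}\big(|\nabla u_c|^2+|u_c|^{2^*}+|u_c|^{\frac{2d+2}{d-1}}\big)\,dx+R^{-2}\int_{|x|\ge R}|u_c|^2\,dx$.

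Next I establish a uniform lower bound $K(u_c(t))\ge\delta_0>0$. Since $E(u_c)=E^*_C<m$ and $K(u_c(t))\ge 0$, Lemma \ref{uniform bound} gives $K(u_c(t))\ge\min\big(\bar\mu(m-E^*_C),\ \tfrac{2}{2d-3}\|\nabla u_c(t)\|_{L^2}^2+\tfrac{2d}{(d+1)(2d-3)}\|u_c(t)\|_{L^{(2d+2)/(d-1)}}^{(2d+2)/(d-1)}\big)$. The $\dot H^1$-precompactness of $\{u_c(t,\cdot-x(t))\}$ together with the $L^2$-tightness of Corollary \ref{compact:mass} makes this family precompact in $H^1$; since $\|u_c(t)\|_{L^2}=\|u_c(0)\|_{L^2}>0$ (as $u_c\not\equiv 0$), no subsequential limit vanishes, hence $c_0:=\inf_t\|\nabla u_c(t)\|_{L^2}>0$. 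Thus $\delta_0:=\min\big(\bar\mu(m-E^*_C),\ \tfrac{2}{2d-3}c_0^2\big)>0$ works.

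Now fix $\eta>0$ and take $C(\eta)$ from Corollaries \ref{compact:almost periodicity} and \ref{compact:mass}. For $T>0$ large, $|x(t)|=o(t)$ yields some $T_0=T_0(\epsilon_0)$ (for $\epsilon_0$ to be chosen) with $|x(t)|\le\epsilon_0 t$ for $t\ge T_0$, while $M_0:=\sup_{[0,T_0]}|x(t)|<\infty$; hence $|x(t)|\le M_0+\epsilon_0 T$ for all $t\in[0,T]$. Set $R_T:=C(\eta)+M_0+\epsilon_0 T+1$, so that $\{|x|\ge R_T\}\subset\{|x-x(t)|\ge C(\eta)\}$ for every $t\in[0,T]$. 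Then Corollary \ref{compact:almost periodicity} and Corollary \ref{compact:mass} give $|\mathrm{Err}_{R_T}(t)|\le C\eta\big(E(u_c)+1\big)$ uniformly on $[0,T]$, so for $\eta$ small enough (depending only on $u_c$) we get $\partial_t Z_{R_T}(t)\ge 4\delta_0-C\eta(E(u_c)+1)\ge 2\delta_0$ for all $t\in[0,T]$. Integrating over $[0,T]$, $2\delta_0 T\le Z_{R_T}(T)-Z_{R_T}(0)\le 2C_2 R_T=2C_2\big(C(\eta)+M_0+\epsilon_0 T+1\big)$. Now fix $\epsilon_0:=\delta_0/(2C_2)$ — legitimate, since $C_2$ and, once $\eta$ is fixed, $C(\eta)$ do not depend on $T$; this choice fixes $T_0$ and $M_0$, and the inequality becomes $\delta_0 T\le 2C_2\big(C(\eta)+M_0+1\big)$, bounding $T$ by a fixed constant and contradicting the freedom to take $T$ arbitrarily large. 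Hence no such $u_c$ exists, which (combined with the previous sections) forces $E^*_C\ge m$ and completes the scattering half of Theorem \ref{theorem}.

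The main obstacle is coordinating three scales: the localization radius $R_T$ must be large compared with the moving center $x(t)$ on $[0,T]$ so that the collar and exterior errors are absorbed by compactness, yet it must grow only linearly in $T$ with a slope we control, so that the trivial bound $|Z_R|\lesssim R$ is beaten by the linear-in-$T$ growth coming from $K(u_c(t))\ge\delta_0$; this balance is precisely what $|x(t)|=o(t)$ provides, and that in turn rested on the $L^2$-compactness derived from the double-Duhamel/negative-regularity argument, the genuinely new input for $d\ge 5$. A secondary delicate point is the strict positivity of $\delta_0$, i.e. excluding $\|\nabla u_c(t)\|_{L^2}\to 0$ along a sequence, which uses both the $\dot H^1$-precompactness and the $L^2$-tightness of the orbit.
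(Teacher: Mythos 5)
Your proposal is correct and follows essentially the same route as the paper: a truncated virial identity, the bound $|\partial_t V_R|\lesssim R$, a uniform positive lower bound on $K(u_c(t))$ via Lemma \ref{uniform bound}, the spatial localization from Corollaries \ref{compact:almost periodicity} and \ref{compact:mass}, and the control $|x(t)|=o(t)$ to make the radius grow sublinearly against the linear-in-time gain. The only (harmless) deviations are cosmetic: a different cutoff and your compactness-based derivation of $\inf_t\|\nabla u_c(t)\|_{L^2}>0$, where the paper instead bounds $K(u_c(t))\gtrsim E(u_0)$ directly by combining Lemma \ref{uniform bound} with Lemma \ref{free-energ-equiva} and energy conservation.
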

\begin{proof} We still drop the subscript $c$. By Proposition \ref{position diff:lower bound}, for any $\eta>0$, there exists
$T_0=T_0(\eta)\in \R$ such that
\begin{align}
|x(t)|\leq \eta t \;\; \text{for all }\;  t\geq T_0.
\end{align}

Now let $\phi$ be a smooth, radial function satisfying $0\leq \phi \leq 1$, $\phi(x)=1$ for $|x|\leq 1$, and $\phi(x)=0$ for $|x|\geq 2$.
For some $R$, we define
\begin{align*}
V_R(t):=\int_{\R^d} \phi_R(x) |u(t,x)|^2\; dx, \quad
\phi_R(x)=R^2\phi\left(\frac{|x|^2}{R^2}\right).
\end{align*}

On one hand, we have
\begin{align*}
\partial_t V_R(t) = 4 \Im \int_{\R^d}
\phi'\left(\frac{|x|^2}{R^2}\right) x \cdot \nabla u (t,x)\;
\overline{u(t,x)} \; dx.
\end{align*}
Therefore, we have
\begin{align}\label{virial:derivative:upper bound}
\big|\partial_t V_R(t)\big| \lesssim R
\end{align}
for all $t\geq 0$ and $R>0$.

On the other hand, by H\"{o}lder's inequality, we have
\begin{align*}
 & \partial^2_t V_R(t)
=4 \int_{\R^d} \partial_{ij}\big(\phi_R \big) u_i(t,x) \bar{u}_j(t,x) \; dx -\int_{\R^d} (\Delta^2 \phi_R)(x) |u(t,x)|^2 \; dx\\
& \qquad \quad - \frac4d \int_{\R^d} (\Delta \phi_R)(x) |u(t,x)|^{2^*}\; dx + \frac{4}{d+1}
\int_{\R^d} (\Delta \phi_R)(x) |u(t,x)|^{\frac{2d+2}{d-1}}\; dx \\
= & 4 \int_{\R^d} \left( 2 |\nabla u(t,x)|^2 -2|u(t,x)|^{2^*} +\frac{2d}{d+1}
|u(t,x)|^{\frac{2d+2}{d-1}} \right)\; dx \\
+ & O\left(\int_{|x|\geq R} \left(|\nabla u(t,x)|^2  +|u(t,x)|^{2^*} +
|u(t,x)|^{\frac{2d+2}{d-1}} \right) \;dx + \left(\int_{R\leq |x|\leq 2R}
|u(t,x)|^{2^*} \;dx \right)^{(d-2)/d}\right)\\
= & 4 K\left(u(t)\right)
+  O\left(\int_{|x|\geq R} \left(|\nabla u(t,x)|^2  +
|u(t,x)|^{\frac{2d+2}{d-1}}\right) \;dx + \left(\int_{R\leq |x|\leq 2R}
|u(t,x)|^{2^*} \;dx \right)^{(d-2)/d}\right).
\end{align*}

By Lemma \ref{uniform bound}, we
have
\begin{align*}
 4 K\left(u(t)\right)= &\; 4 \int_{\R^d} \left( 2 |\nabla u(t,x)|^2 -2|u(t,x)|^{2^*} +\frac{2d}{d+1}
|u(t,x)|^{\frac{2d+2}{d-1}} \right)\; dx \\
\gtrsim & \min\left(\bar{\mu}(m-E(u(t))), \frac{2}{2d-3} \big\|\nabla u(t)
\big\|^2_{L^2} +  \frac{2d}{(d+1)(2d-3)} \big\|u(t)\big\|^{\frac{2d+2}{d-1}}_{L^{\frac{2d+2}{d-1}}} \right) \\
 \gtrsim &
E(u(t)),
\end{align*}
Thus, choosing $\eta>0$ sufficiently small and
$\displaystyle R:=C(\eta)+\sup_{t\in [T_0, T_1]}|x(t)|$ and by Corollary
\ref{compact:almost periodicity}, we obtain
\begin{align*}
 \partial^2_t V_R(t)  \gtrsim E(u(t)) = E(u_0),
\end{align*}
which implies that for all $T_1>T_0$
\begin{align*} (T_1-T_0)E(u_0) \lesssim
R=C(\eta)+\sup_{t\in [T_0, T_1]}|x(t)| \leq  C(\eta) + \eta T_1.
\end{align*}
Taking $\eta$ sufficiently small and $T_1$
sufficiently large, we obtain a contradiction unless $u\equiv0$. But
$u\equiv 0$ is not consistent with the fact that
$\big\|u\big\|_{ST(\R)}=\infty$.
\end{proof}

\proof[Acknowledgements] The authors are partly supported by the NSF
of China (No. 10901148, No. 11171033, No. 11231006). L. Zhao is
supported by the Fundamental Research Funds for the Central
Universities (WK0010000001, WK00100000013).  The authors would like
to thank Professors J. Holmer, K.~Nakanishi and S. Roudenko for
their valuable communications and comments. \qed

%
%
%
%

\end{document}